\documentclass[12pt]{article}
\usepackage{amsmath}
\usepackage{amssymb}
\usepackage{amsthm}
\usepackage[top=20truemm, left=20truemm, right=20truemm]{geometry}

\usepackage{amscd}
\newtheorem{dfn}{Definition}[section]
\newtheorem{thm}[dfn]{Theorem}
\newtheorem{lem}[dfn]{Lemma}
\newtheorem{rem}[dfn]{Remark}
\newtheorem{cor}[dfn]{Corollary}
\newtheorem{prob}[dfn]{Problem}
\newtheorem{prop}[dfn]{Proposition}

\usepackage{amscd}
\usepackage{graphicx}
\usepackage[all]{xy}

\title{A topological invariant for continuous fields of the Cuntz algebras}
\author{Taro Sogabe \\\small Graduate School of Science, Kyoto University, Japan\\
\small staro@math.kyoto-u.ac.jp}

\begin{document}
\maketitle
\abstract
For a continuous field of the Cuntz algebra over a finite CW complex,
we introduce a topological invariant, which is an element in Dadarlat--Pennig's generalized cohomology group,
and prove that the invariant is trivial if and only if the field comes from a vector bundle via Pimsner's construction.

\section{Introduction}
This paper is a continuation of our work \cite{r2},
and our main interest is continuous fields of the Cuntz algebra $\mathcal{O}_{n+1}$.
By M. Dadarlat's result \cite{D2},
continuous fields of $\mathcal{O}_{n+1}$ over a finite CW complex $X$ are automatically locally trivial,
and the homotopy theory is useful to investigate them.
He studied continuous fields of $\mathcal{O}_{n+1}$ over $X$ coming from vector bundles in \cite{D1},
and proved that the ideals of $K$-theory ring defined by the vector bundles give the complete invariant of these fields.
He also showed that every continuous field comes from a vector bundle when the cohomology $H^*(X)$ has no $n$-torsion (see \cite{D1, r2}).
This classification result is used in the work \cite{IM} of M. Izumi and H. Matui for the problem of classifying group actions on $\mathcal{O}_{n+1}$.
However, in general there exist examples of continuous fields which do not come from vector bundles.
The purpose of this paper is to introduce a topological invariant for continuous fields of $\mathcal{O}_{n+1}$,
and we show that the invariant is trivial if and only if the continuous fields come from vector bundles.

To sketch our construction of the invariant,
let us review relevant results.
The Dixmier--Douady theory classifies the locally trivial bundles over a finite CW complex $X$, whose fiber is the algebra of compact operators $\mathbb{K}$, by the third cohomology group $H^3(X)$ (see \cite{RW, CS}).
In their remarkable work, M. Dadarlat and U. Pennig generalized this theory to $\mathbb{K}\otimes D$ for every strongly self-absorbing C*-algebra $D$.
The strongly self-absorbing C*-algebras introduced in \cite{TW} are an important class of C*-algebras containing the Cuntz algebra $\mathcal{O}_2, \mathcal{O}_\infty$ and the Jang--Su algebra $\mathcal{Z}$.
They revealed that the set of the isomorphism classes of the locally trivial continuous fields of $\mathbb{K}\otimes D$ is identified with the first group $E^1_D$ of a generalized cohomology $E^*_D$ (see \cite{DP}).

In this paper, using the Cuntz--Toeplitz algebra $E_{n+1}$, we construct the invariant mentioned above as an element of the group $E^1_D(X)$.
Recall that $E_{n+1}\otimes D$ has a unique proper ideal $\mathbb{K}\otimes D$ and the quotient algebra $(E_{n+1}\otimes D)/(\mathbb{K}\otimes D)$ is isomorphic to $\mathcal{O}_{n+1}\otimes D$.
Our main technical result is as follows :
the group homomorphism $q : \operatorname{Aut}(E_{n+1}\otimes D)\to\operatorname{Aut}(\mathcal{O}_{n+1}\otimes D)$ is a weak homotopy equivalence for every strongly self-absorbing C*-algebra $D$ satisfying the UCT (Corollary \ref{mt3}).
For the proof,
we use the arguments developed in \cite{ST} for computing the homotopy groups of $\operatorname{Aut}(E_{n+1})$.
Then the map $q$, the group homomorphism $\otimes{\rm id}_D : \operatorname{Aut}(\mathcal{O}_{n+1})\to\operatorname{Aut}(\mathcal{O}_{n+1}\otimes D)$ and the restriction map $\eta_n : \operatorname{Aut}(E_{n+1}\otimes D)\to\operatorname{Aut}(\mathbb{K}\otimes D)$ give the invariant $\mathfrak{b}_D : [X, \operatorname{BAut}(\mathcal{O}_{n+1})]\to E^1_D(X)$.
We show that the set $\mathfrak{b}_{M_{(n)}}^{-1}(0)$ consists of the continuous fields coming from vector bundles which are classified in \cite{D1, r2}.
Here we denote by $M_{(n)}$ the UHF algebra of infinite type whose $K_0$-group is the localization of $\mathbb{Z}$ at the ideal $n\mathbb{Z}$ (Theorem \ref{mt4}).

It would be desirable to completely determine the image and the inverse images of the map $\mathfrak{b}_{M_{(n)}}$ but we have not been able to do this.
It is not known whether the image of $\mathfrak{b}_{M_{(n)}}$ is a group or not.
However it is worth pointing out the following two cases.
In the case of ${\rm dim}X\leq 3$,
the classical obstruction theory tells that $[X, \operatorname{BAut}(\mathcal{O}_{n+1})]$ is identified with $H^2(X, \mathbb{Z}_n)$, and $\mathfrak{b}_{M_{(n)}}$ is identified with the Bockstein map $H^2(X, \mathbb{Z}_n)\to \operatorname{Tor}(H^3(X), \mathbb{Z}_n)$ (Theorem \ref{three}).
For the suspension $X=SY$,
the map $\mathfrak{b}_{M_{(n)}}$ reduces to the Bockstein map $\tilde{K}^0(X; \mathbb{Z}_n) \to \operatorname{Tor}(K^1(X), \mathbb{Z}_n)$.
\section*{Acknowledgments}
The author would like to express his greatest appreciation to his supervisor Prof. Masaki Izumi who informed him of Theorem \ref{three} and gave him the idea of the construction of the invariant and many other insightful comments.

\section{Preliminaries}
\subsection{Notation and the basic facts on C*-algebras}

Let $A$ be a unital C*-algebra and let $U(A)$ be the group of unitary elements in $A$.
We denote by $U_0(A)$ the path component of $1_A$ in $U(A)$.
We denote the set of positive elements of $A$ by $A_+$ and denote the set of positive contractions by $(A_+)_1$.
For a non-unital C*-algebra $B$, we denote its unitization by $B^{\sim}$.
The $K$-groups of $A$ are denoted by $K_i(A), i=0, 1$.
We denote by $[p]_0$ the class of a projection $p$ in $K_0(A)$ and by $[u]_1$ the class of a unitary $u$ in $K_1(A)$.
We denote $K_0(A)_+ :=\{ [p]_0\in K_0(A)\; |\; p\in \mathcal{P}(A\otimes \mathbb{K})\}$ where $\mathcal{P}(A\otimes\mathbb{K})$ is the set of projections.
We denote by $SA$ the suspension of a C*-algebra $A$, and one has the Bott periodicity : $K_0(A)\cong K_1(SA)$.
For a compact Hausdorff space $X$,
we denote by $C(X)$ the C*-algebra of continuous functions on $X$ and denote by $C_0(X, x)$ the C*-subalgebra consisting of all functions vanishing at $x\in X$,
which is the kernel of the evaluation map ${\rm ev}_x : C(X)\ni f\mapsto f(x)\in \mathbb{C}$.
We denote $K^i(X)=K_i(C(X))$ and, for connected $X$, denote $\tilde{K}^i(X)=K_i(C_0(X, x))$ for short.
We denote by $\mathbb{M}_n$ the $n$ by $n$ matrix algebra.
For two unital C*-algebras $A_1$ and $A_2$, we denote by $\operatorname{Hom}(A_1, A_2)_u$ the set of unital $*$-homomorphisms from $A_1$ to $A_2$ and denote $\operatorname{End}(A_1):=\operatorname{Hom}(A_1, A_1)_u$.
For a topological space $Y$ and two elements $y_0, y_1\in Y$, we denote $y_0\sim_h y_1$ in $Y$ if there is a continuous path from $y_0$ to $y_1$.
A unital C*-algebra $A$ is called $K_1$-injective if the natural map $U(A)/\sim_h\to K_1(A)$ is injective.

We denote by $\mathbb{K}$ the algebra of compact operators of the infinite dimensional separable Hilbert space $H$.
For an algebra $A\otimes\mathbb{K}$, we denote its multiplier algebra by $\mathcal{M}(A\otimes\mathbb{K})$ and denote by $\mathcal{Q}(A\otimes\mathbb{K})$ the quotient algebra $\mathcal{M}(A\otimes\mathbb{K})/A\otimes\mathbb{K}$ with the quotient map $\pi : \mathcal{M}(A\otimes\mathbb{K})\to \mathcal{Q}(A\otimes \mathbb{K})$.
We remark that $\mathcal{Q}(A\otimes\mathbb{K})$ is $K_1$-injective (see \cite[Section 1.13]{M}).
The algebra $\mathcal{M}(C(X)\otimes A\otimes\mathbb{K})$ is identified with $C_{s}^b(X, \mathcal{M}(A\otimes\mathbb{K}))$, that is, the algebra of $\mathcal{M}(A\otimes\mathbb{K})$-valued bounded continuous functions on $X$ with respect to the strict topology (see \cite[Corollary 3.4]{AP}, \cite[Proposition 2.57]{RW}).

\begin{thm}[{\cite[Theorem 1]{Hig}}]\label{Kui}
For a unital C*-algebra $A$,
the unitary group $U(\mathcal{M}(A\otimes\mathbb{K}))$ is contractible with respect to the norm topology, and we have
$K_i(\mathcal{M}(A\otimes\mathbb{K}))=0,\, i=0, 1.$
\end{thm}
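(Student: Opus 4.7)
The plan is to carry out the Kuiper-Mingo contractibility argument in the multiplier-algebra setting, leveraging the stability $A\otimes\mathbb{K}\cong A\otimes\mathbb{K}\otimes\mathbb{K}$. Stability provides a sequence of isometries $\{S_i\}_{i\geq 1}\subset\mathcal{M}(A\otimes\mathbb{K})$ whose range projections are mutually orthogonal and sum strictly to $1$, and equivalently unital $*$-isomorphisms $M_n(\mathcal{M}(A\otimes\mathbb{K}))\cong\mathcal{M}(A\otimes\mathbb{K})$ for every $n$. The basic building block is the $2\times 2$ rotation: for any unitary $u$, the path $t\mapsto R(t)\,\mathrm{diag}(u,1)\,R(t)^{*}$ with $R(t)=\begin{pmatrix}\cos t & -\sin t\\ \sin t & \cos t\end{pmatrix}$ norm-continuously connects $\mathrm{diag}(u,1)$ to $\mathrm{diag}(1,u)$ in $U(M_2(\mathcal{M}(A\otimes\mathbb{K})))$, so that $\mathrm{diag}(u,u^{*})$ is norm-homotopic to $1$ via a formula depending continuously on $u$. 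Encoded via $S_{2i-1},S_{2i}$, this contracts the pair unitary $S_{2i-1}uS_{2i-1}^{*}+S_{2i}u^{*}S_{2i}^{*}$ to the pair projection $S_{2i-1}S_{2i-1}^{*}+S_{2i}S_{2i}^{*}$ inside the corresponding orthogonal corner.

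Next I would execute the Eilenberg swindle. Bundling the pair rotations simultaneously across mutually orthogonal corners produces a norm-continuous contraction $H\colon U(\mathcal{M}(A\otimes\mathbb{K}))\times[0,1]\to U(\mathcal{M}(A\otimes\mathbb{K}))$ of $u$ to $1$: two different bracketings of the underlying infinite alternating sum $W(u)=\sum_{i\geq 1}(S_{2i-1}uS_{2i-1}^{*}+S_{2i}u^{*}S_{2i}^{*})$ yield norm-continuous paths from $W(u)$ to $1$ (grouping pairs $(S_{2i-1},S_{2i})$) and from $W(u)$ to an element in the same path-component as $u$ (grouping pairs $(S_{2i},S_{2i+1})$ and absorbing the leftover $S_1uS_1^{*}$ via a Cuntz-style shift rearrangement that uses the infinite reserve of isometries). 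The crucial continuity estimate is that, because the pair rotations live in pairwise orthogonal corners, the norm difference $\|H(u,t)-H(u,s)\|$ equals the supremum of the individual pair differences, each of which is Lipschitz in $|t-s|$ with a constant depending only on the single fixed rotation formula, uniformly in $i$; joint continuity in $(u,t)$ follows similarly.

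The main obstacle is precisely this norm-continuity: although $W(u)$ itself converges only in the strict topology, the contracting homotopy must be norm-continuous, which forces careful bookkeeping so that at every stage the moving parts of the path live in mutually orthogonal corners of $\mathcal{M}(A\otimes\mathbb{K})$. Once contractibility is in hand, $K_1(\mathcal{M}(A\otimes\mathbb{K}))=0$ is immediate, since every unitary is norm-homotopic to $1$ and $K_1$ of a $K_1$-injective unital C*-algebra is the quotient $U/U_0$. For $K_0=0$, the same contractibility result applied to $M_n(\mathcal{M}(A\otimes\mathbb{K}))\cong\mathcal{M}(A\otimes\mathbb{K})$ (stability again) shows that $U_\infty(\mathcal{M}(A\otimes\mathbb{K}))$ is contractible, and Bott periodicity $K_0(B)\cong\pi_1(U_\infty(B))$ then forces $K_0(\mathcal{M}(A\otimes\mathbb{K}))=0$; alternatively one swindles projections directly.
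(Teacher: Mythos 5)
The paper offers no proof of this statement: it is quoted verbatim from Cuntz--Higson \cite{Hig}, so the only meaningful comparison is with the known Kuiper-type proofs (Kuiper, Mingo, Cuntz--Higson), and against those your sketch has a genuine gap at exactly the point where all the real work lies. Your rotation-plus-swindle construction is fine as far as it goes: with isometries $S_i$ whose range projections are orthogonal and sum strictly to $1$, the element $W(u)=\sum_i\bigl(S_{2i-1}uS_{2i-1}^*+S_{2i}u^*S_{2i}^*\bigr)$ is a unitary depending norm-continuously on $u$, and the two bracketings give jointly norm-continuous homotopies from $W(u)$ to $1$ and from $W(u)$ to $S_1uS_1^*+(1-S_1S_1^*)$. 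But this only proves that the map $u\mapsto S_1uS_1^*+(1-S_1S_1^*)$ is null-homotopic. Contractibility of $U(\mathcal{M}(A\otimes\mathbb{K}))$ requires, in addition, a homotopy, jointly norm-continuous in $u$, from the identity map $u\mapsto u$ to $u\mapsto S_1uS_1^*+(1-S_1S_1^*)$, i.e.\ one must first deform an arbitrary unitary so that it acts as the identity on an infinite complemented ``reserve'' copy of the standard module. This is the step you dismiss as ``absorbing the leftover $S_1uS_1^*$ via a Cuntz-style shift rearrangement,'' and no such easy rearrangement exists: there is no norm-continuous path of isometries from $1$ to $S_1$ (the corank is locally constant along norm-continuous paths of isometries), the strictly continuous paths that do exist do not yield norm-continuous conjugation homotopies for a general multiplier unitary, and pointwise $K_1$-type arguments cannot produce a homotopy depending continuously on $u$. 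This absorption step is precisely the content of Kuiper's original geometric argument and of the nontrivial replacements for it found by Mingo and by Cuntz--Higson in the Hilbert-module setting; without it your contraction only shrinks the image of a non-surjective self-map of $U$, not $U$ itself. (You also misidentify the ``main obstacle'' as the norm-continuity bookkeeping of the orthogonal-corner rotations, which is in fact the routine part.)

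Two smaller remarks. The $K$-theory consequences are fine once contractibility is granted, but your appeal to $K_1$-injectivity is unnecessary: since $M_n(\mathcal{M}(A\otimes\mathbb{K}))\cong\mathcal{M}(M_n(A)\otimes\mathbb{K})$ is again of the same form, all $U(M_n)$ are connected, so $K_1=0$ by definition, and $K_0\cong\pi_1(U_\infty)$ gives $K_0=0$. Alternatively, if all you need is the $K$-theory vanishing (not contractibility), there is a genuinely elementary route you could have taken instead: the unital $*$-homomorphism $\phi_\infty(x)=\sum_i S_ixS_i^*$ of $\mathcal{M}(A\otimes\mathbb{K})$ satisfies $\phi_{\infty*}=\phi_{\infty*}+\mathrm{id}$ on $K_*$, forcing $K_*(\mathcal{M}(A\otimes\mathbb{K}))=0$; but this swindle does not and cannot substitute for the absorption step needed for norm-contractibility of the unitary group.
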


For C*-algebras $B$ and $C$,
an extension $C$ of $B$ by $A\otimes \mathbb{K}$ is an exact sequence
$$0\to A\otimes\mathbb{K}\to C\to B\to 0,$$
and the Busby invariant of the extension is the induced map $\tau : B\to \mathcal{Q}(A\otimes\mathbb{K})$.
We identify an extension with the corresponding Busby invariant.
We refer to \cite{Bl} for the following definition and the basic facts of the theory of extensions of C*-algebras.

\begin{dfn}
Let $A$ be a unital C*-algebra and let $B$ be a C*-algebra.
An extension $\tau : B\to \mathcal{Q}(A\otimes \mathbb{K})$ is called trivial if the corresponding exact sequence splits.
The extension is called essential if $\tau$ is injective, and called unital if $B$ is unital and $\tau$ is a unital $*$-homomorphism.

Two Busby invariants $\tau_i : B\to \mathcal{Q}(A\otimes\mathbb{K}), \, i=1, 2$ are said to be strongly unitarily equivalent, $\tau_1\sim_{s.u.e}\tau_2$, if there is a unitary $U\in \mathcal{M}(A\otimes\mathbb{K})$ with $\tau_1=\operatorname{Ad}\pi(U)\circ\tau_2$.
They are said to be weakly unitarily equivalent, $\tau_1\sim_{w.u.e}\tau_2$, if there is a unitary $u\in \mathcal{Q}(A\otimes\mathbb{K})$ with $\tau_1=\operatorname{Ad}u\circ\tau_2$.
We denote $\tau_1\sim_s\tau_2$ if there exist two trivial extensions $\rho_1$ and $\rho_2$ with $\tau_1\oplus\rho_1\sim_{s.u.e}\tau_2\oplus\rho_2$ and denote by $\operatorname{Ext}(B, A\otimes\mathbb{K})$ the set of the equivalence classes of the extensions with respect to the equivalence relation $\sim_s$.
\end{dfn}
We remark that $\tau_1\sim_{w.u.e}\tau_2$ implies $\tau_1\sim_s\tau_2$ (see \cite[Proposition 15.6.4]{Bl}).
For $B$ nuclear and separable, the set $\operatorname{Ext}(B, A\otimes\mathbb{K})$ is a group.

\begin{thm}[{\cite[Theorem 23.1.1]{Bl}}]\label{ext}
Let $A$ and $B$ be nuclear separable C*-algebras with $A$ in the bootstrap class.
Then there is a short exact sequence
$$0\to \bigoplus_{i=0, 1}\operatorname{Ext}_{\mathbb{Z}}^1(K_i(B), K_i(A))\to\operatorname{Ext}(B, A\otimes\mathbb{K})\to\bigoplus_{i=0, 1}\operatorname{Hom}(K_i(B), K_{i+1}(A))\to 0,$$
which splits unnaturally.
If $\bigoplus_{i=0, 1}\operatorname{Hom}(K_i(B), K_{i+1}(A))=0$, we have an isomorphism
$$\operatorname{Ext}(B, A\otimes\mathbb{K})\to \bigoplus_{i=0, 1}\operatorname{Ext}_{\mathbb{Z}}^1(K_i(B), K_i(A))$$
that sends the class of an extension $0\to A\otimes\mathbb{K}\to C\to B\to 0$ to the corresponding group extension $[K_i(A)\to K_i(C)\to K_i(B)]\in\operatorname{Ext}_{\mathbb{Z}}^1(K_i(B), K_i(A)), \, i=0, 1$.
\end{thm}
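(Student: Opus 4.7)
\textbf{Proof plan for Theorem \ref{ext}.}
My strategy is to reduce the statement to the Rosenberg--Schochet universal coefficient theorem for Kasparov's bivariant $KK$-theory, which is really where the substantive content lives.

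First, I would identify $\operatorname{Ext}(B, A\otimes\mathbb{K})$ with $KK^1(B, A)$. Since $B$ is nuclear and separable, Kasparov's theorem (equivalently the Cuntz picture of $KK$) gives a natural isomorphism $\operatorname{Ext}(B, A\otimes\mathbb{K}) \cong KK^1(B, A) \cong KK(B, SA)$; under this identification the class of an extension $0\to A\otimes\mathbb{K}\to C\to B\to 0$ corresponds to the element that induces the connecting maps $\partial : K_i(B)\to K_{i+1}(A)$ in the six-term K-theory exact sequence of the extension. For this step I need $B$ nuclear so that all extensions are semisplit (and hence Ext is an abelian group, not merely a semigroup), and separability to fit into the $KK$ framework.

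Next, I would invoke the Rosenberg--Schochet UCT: for $A$ in the bootstrap class and $B$ separable, there is a natural short exact sequence
$$0\to \operatorname{Ext}^1_{\mathbb{Z}}(K_*(B), K_{*+1-n}(A))\to KK^n(B,A)\to \operatorname{Hom}(K_*(B), K_{*+n}(A))\to 0,$$
which splits unnaturally. The proof of this UCT is by the standard bootstrap argument: verify the statement for $A=\mathbb{C}$ (and for basic building blocks like $A=C_0(\mathbb{R})$), then propagate through mapping cones and inductive limits using the five lemma to cover the entire bootstrap class. Specializing to $n=1$ and composing with the isomorphism from the previous paragraph yields the displayed sequence.

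Finally, I would verify the explicit description of the right-hand map. By the construction of the Kasparov isomorphism, the assignment $[\tau]\mapsto \partial_\tau\in \operatorname{Hom}(K_*(B),K_{*+1}(A))$ agrees with the index map of the six-term sequence, so vanishing of the Hom-component is exactly the condition that the six-term sequence collapses into two short exact sequences $0\to K_i(A)\to K_i(C)\to K_i(B)\to 0$; the induced map $\operatorname{Ext}(B, A\otimes\mathbb{K})\to \bigoplus_i \operatorname{Ext}^1_{\mathbb{Z}}(K_i(B),K_i(A))$ is then exactly the assignment sending $[\tau]$ to these group extensions. The main obstacle is the UCT itself, i.e.\ the bootstrap induction together with the identification of $\operatorname{Ext}$ with $KK^1$; everything else is diagram chasing.
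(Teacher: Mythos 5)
Your proposal is correct and coincides with the paper's own treatment: the paper offers no argument of its own but simply quotes Blackadar's Theorem 23.1.1, i.e.\ the Rosenberg--Schochet universal coefficient theorem combined with the identification $\operatorname{Ext}(B, A\otimes\mathbb{K})\cong KK^1(B,A)$ for $B$ nuclear and separable (semisplitness via Choi--Effros), together with the standard compatibility of the UCT maps with the six-term $K$-theory sequence, which is exactly what you outline. One small caveat: as you actually use the UCT, the bootstrap hypothesis must be imposed on the first $KK$-variable, i.e.\ on the quotient algebra $B$ (in the paper's applications $B=\mathcal{O}_{n+1}$ or $\mathcal{O}_{n+1}\otimes D$, which do lie in the bootstrap class), rather than on $A$ as the wording of the statement, which you repeat, would suggest.
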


Let $E_{n+1}$ be the universal C*-algebra generated by $n+1$ isometries with mutually orthogonal ranges, called the Cuntz--Toeplitz algebra, and let $\{T_i\}_{i=1}^{n+1}$ be the canonical generators of $E_{n+1}$.
The closed two-sided ideal generated by the minimal projection $e : =1-\sum_{i=1}^{n+1}T_iT_i^*$ is isomorphic to $\mathbb{K}$, which is known to be the only closed non-trivial two-sided ideal.
Let $\pi : E_{n+1}\to \mathcal{O}_{n+1}$ be the quotient map by the ideal $\mathbb{K}$ and denote $S_i : =\pi(T_i)$.
The quotient algebra $\mathcal{O}_{n+1}$ is the universal C*-algebra generated by $n+1$ isometries with relation : $S_i^*S_j=\delta_{ij},\quad 1=\sum_{i=1}^{n+1}S_iS_i^*$.
We denote by $\mathcal{O}_{\infty}$ the universal C*-algebra generated by countably infinite isometries with mutually orthogonal ranges.
These algebras $\mathcal{O}_{n+1}, \mathcal{O}_{\infty}$ are called the Cuntz algebras with the following $K$-groups 
$$K_0(\mathcal{O}_{n+1})=\mathbb{Z}_n,\quad K_0(\mathcal{O}_{\infty})=\mathbb{Z},\quad K_1(\mathcal{O}_{n+1})=K_1(\mathcal{O}_{\infty})=0$$
(see \cite[Theorem 3.7, 3.8, Corollary 3.11]{C2}).
They are purely infinite and simple (i.e., for every non-zero element $x$, there exist two elements $y, z$ with $yxz=1$).

For a separable unital C*-algebra $A$, the short exact sequence
$0\to \mathbb{K}\otimes A\to E_{n+1}\otimes A\to \mathcal{O}_{n+1}\otimes A\to 0$
yields the following 6-term exact sequence 
$$\xymatrix{
K_0(A)\ar[r]^{-n}&K_0(A)\ar[r]&K_0(\mathcal{O}_{n+1}\otimes A)\ar[d]^{exp}\\
K_1(\mathcal{O}_{n+1}\otimes A)\ar[u]^{\delta}&K_1(A)\ar[l]&K_1(A)\ar[l]^{-n},
}$$
where $\delta$ is the index map, and the image of $\delta$ is $\operatorname{Tor}(K_0(A), \mathbb{Z}_n)$. 

Elliott--Kucerovsky's absorption theorem is the main technical ingredient in Section 3.2. 
\begin{dfn}[{\cite{EK, JG}}]\label{PL}
An extension $0\to A\otimes\mathbb{K}\to C\to B\to 0$ is called purely large if for every $c\in C \backslash A\otimes\mathbb{K}$, there is a subalgebra $D \subset \overline{c(A\otimes\mathbb{K})c^*}$ satisfying the following $\colon$

1) The subalgebra $D$ is not contained in any proper closed two-sided ideal of $A\otimes\mathbb{K}$,

2) The subalgebra $D$ is $\sigma$-unital and satisfies $D\cong D\otimes \mathbb{K}$.
\end{dfn}
Every purely large extension is essential.
The following is a special case of \cite[p 387]{EK}.
\begin{thm}\label{EKA}
Let $A$ and $B$ be separable C*-algebras, and assume that $B$ is unital and nuclear.
Let $\rho : B\to\mathcal{M}(A\otimes\mathbb{K})$ be a unital $*$-homomorphism, and let $\tau : B\to \mathcal{Q}(A\otimes\mathbb{K})$ be the Busby invariant of a unital extension $0\to A\otimes\mathbb{K}\to C\to B\to 0$.
If the extension is purely large, then we have $\tau\oplus(\pi\circ\rho)\sim_{s.u.e}\tau$.
\end{thm}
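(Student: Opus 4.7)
The plan is to exploit the purely large hypothesis as a source of ``room'' inside the extension $C$ and then run an approximate intertwining of Voiculescu--Kasparov type between $\tau$ and $\tau\oplus(\pi\circ\rho)$. Since a purely large extension is essential, identify $C$ with a unital $*$-subalgebra of $\mathcal{M}(A\otimes\mathbb{K})$ so that $\tau=\pi|_C$. Fix isometries $s_1,s_2\in\mathcal{M}(A\otimes\mathbb{K})$ with $s_1s_1^*+s_2s_2^*=1$; the Busby invariant of $\tau\oplus(\pi\circ\rho)$ is then $b\mapsto\pi(s_1)\tau(b)\pi(s_1)^*+\pi(s_2)\pi(\rho(b))\pi(s_2)^*$, and the goal becomes the construction of a single unitary $U\in\mathcal{M}(A\otimes\mathbb{K})$ intertwining this map with $\tau$ modulo $A\otimes\mathbb{K}$.

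For the construction, first choose a quasicentral approximate unit $(e_n)\subset A\otimes\mathbb{K}$ so that $\|[e_n,x]\|\to 0$ for every $x$ in $\iota(B)\cup\rho(B)$, where $\iota$ is a set-theoretic lift of $\tau$; set $f_n=(e_{n+1}-e_n)^{1/2}$. This strictly summable orthogonal refinement of $1$ asymptotically block-diagonalizes both $\iota$ and $\rho$. The purely large hypothesis is then invoked stage by stage: for a suitable positive $c_n\in C\setminus (A\otimes\mathbb{K})$ whose support sits essentially above $e_{n+1}$, it supplies a $\sigma$-unital stable subalgebra $D_n\subset\overline{c_n(A\otimes\mathbb{K})c_n^*}$ that is full in $A\otimes\mathbb{K}$. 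Using nuclearity of $B$, apply Voiculescu's theorem inside $D_n$ to produce partial isometries $v_n\in\mathcal{M}(A\otimes\mathbb{K})$ with $v_n^*v_n=f_n^2$, mutually orthogonal range projections, and $\|v_n\rho(b)v_n^*-f_n\iota(b)f_n\|<2^{-n}(1+\|b\|)$ for $b$ in a fixed dense sequence of $B$. The strict limit $V=\sum_n v_n$ is then an isometry in $\mathcal{M}(A\otimes\mathbb{K})$ with $\pi(V)\pi(\rho(b))\pi(V)^*=\tau(b)$; combining $V$ with a similar identification of $\tau$ with an orthogonal subextension of itself, and packaging everything through the frame $s_1,s_2$, yields the required unitary $U$.

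The principal obstacle is the convergence of the inductive construction: at each step one must simultaneously control the intertwining error, preserve orthogonality of the new partial isometry with everything built before, and verify that pure largeness still applies after cutting by $1-e_{n+1}$. This is handled by an Elliott-style one-sided approximate intertwining in $\mathcal{M}(A\otimes\mathbb{K})$, combined with a Kasparov perturbation lemma saying that small perturbations of unital $*$-homomorphisms from a separable nuclear $B$ into $\mathcal{Q}(A\otimes\mathbb{K})$ can be undone by unitaries close to $1$. That reduces the problem to a Cauchy correction at each stage, delivering the desired $U\in\mathcal{M}(A\otimes\mathbb{K})$ in the limit and hence the strong unitary equivalence $\tau\oplus(\pi\circ\rho)\sim_{s.u.e}\tau$.
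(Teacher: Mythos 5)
The paper offers no proof of this theorem: it is quoted as a special case of Elliott--Kucerovsky's absorption theorem \cite[p.~387]{EK}, so your proposal is effectively an attempt to reprove that theorem, and it must be measured against the genuine content of that result. Your outline does echo the correct architecture (essentiality, a quasicentral approximate unit, a Weyl--von Neumann--Voiculescu style intertwining fed by pure largeness), but the decisive step is missing. The instruction to ``apply Voiculescu's theorem inside $D_n$'' is not available: Voiculescu's theorem concerns the case $A=\mathbb{C}$ (representations modulo $\mathbb{K}$ on Hilbert space). For a general coefficient algebra $A\otimes\mathbb{K}$, the assertion that the unital map $\rho$ can be implemented, modulo $A\otimes\mathbb{K}$, by partial isometries living in a full stable subalgebra $D_n\subset\overline{c_n(A\otimes\mathbb{K})c_n^*}$ and asymptotically commuting with everything already built is precisely the technical heart of the Elliott--Kucerovsky theorem; in \cite{EK} it is obtained from Kasparov--Stinespring dilations of completely positive maps (this is where nuclearity of $B$ actually enters), Kasparov's technical theorem, and a careful use of the purely large condition to dominate arbitrary positive elements of $A\otimes\mathbb{K}$ inside $\overline{c(A\otimes\mathbb{K})c^*}$. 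As written, the only point where pure largeness and nuclearity are used is this unproved step, so the argument is circular.

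There are further gaps even granting that step. The exact identity $\pi(V)\pi(\rho(b))\pi(V)^*=\tau(b)$ for a single isometry $V$ is stronger than what such constructions give; one can only arrange $\tau(b)\pi(V)=\pi(V)\pi(\rho(b))$, i.e.\ a corner of $\tau$ agreeing with $\pi\circ\rho$ modulo the canonical ideal, with the range projection commuting with $\tau(B)$ only modulo that ideal, and converting this into the strong unitary equivalence $\tau\oplus(\pi\circ\rho)\sim_{s.u.e}\tau$ requires the infinite-repetition (``$\sigma^{\oplus\infty}$'') absorption trick together with an Elliott-type intertwining whose estimates and orthogonality bookkeeping you have not supplied --- indeed you acknowledge that keeping the new partial isometries orthogonal to the earlier ones while retaining pure largeness after cutting by $1-e_{n+1}$ is the obstacle, but no mechanism is given, and there is no reason a positive $c_n\in C\setminus(A\otimes\mathbb{K})$ with ``support essentially above $e_{n+1}$'' exists. (Minor: an element $v_n$ with $v_n^*v_n=f_n^2$ is not a partial isometry unless $f_n$ is a projection.) In short, the proposal reproduces the shape of the known proof but leaves its essential content --- the purely-large-to-absorbing implication --- unestablished; for the purposes of this paper the correct move is simply to invoke \cite{EK}, as the author does.
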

\begin{lem}
For a unital separable C*-algebra $A$, every unital extension $0\to A\otimes \mathbb{K}\to C\to \mathcal{O}_{n+1}\to 0$ is purely large.
\end{lem}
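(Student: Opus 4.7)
The plan is to verify the two conditions of Definition~\ref{PL}, using simplicity and pure infiniteness of $\mathcal{O}_{n+1}$ combined with the essentiality of the extension. Essentiality is automatic: the Busby invariant $\tau\colon\mathcal{O}_{n+1}\to\mathcal{Q}(A\otimes\mathbb{K})$ is unital, hence non-zero, and injective by simplicity of $\mathcal{O}_{n+1}$, so $C\hookrightarrow\mathcal{M}(A\otimes\mathbb{K})$.

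For condition (1), I will show that $\overline{c(A\otimes\mathbb{K})c^*}$ itself is full in $A\otimes\mathbb{K}$. Since the ideals of $A\otimes\mathbb{K}$ are of the form $J\otimes\mathbb{K}$ for ideals $J$ of $A$, suppose for contradiction that $\overline{c(A\otimes\mathbb{K})c^*}\subset J\otimes\mathbb{K}$ for a proper ideal $J\subsetneq A$. Extending the quotient $q\colon A\otimes\mathbb{K}\to(A/J)\otimes\mathbb{K}$ strictly continuously to $\tilde q\colon\mathcal{M}(A\otimes\mathbb{K})\to\mathcal{M}((A/J)\otimes\mathbb{K})$, the hypothesis $cac^*\in J\otimes\mathbb{K}$ for every $a\in A\otimes\mathbb{K}$ gives $\tilde q(c)\,y\,\tilde q(c)^*=0$ for every $y\in(A/J)\otimes\mathbb{K}$, whence $\tilde q(c)=0$. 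Thus $c\in\ker(\tilde q|_C)$, which is a proper ideal of $C$ (proper as $\tilde q(1_C)=1$). The simplicity of $\mathcal{O}_{n+1}=C/(A\otimes\mathbb{K})$ forces every proper ideal of $C$ to lie inside $A\otimes\mathbb{K}$, contradicting $c\notin A\otimes\mathbb{K}$.

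For condition (2), I construct a copy of $\mathbb{K}$ inside $\overline{c(A\otimes\mathbb{K})c^*}$; since $\mathbb{K}$ is $\sigma$-unital and stable ($\mathbb{K}\cong\mathbb{K}\otimes\mathbb{K}$), this will serve as $D$. Pure infiniteness and simplicity of $\mathcal{O}_{n+1}$ together with $\pi(c^*c)\neq 0$ furnish $y\in\mathcal{O}_{n+1}$ satisfying $y^*\pi(c^*c)y=1$ (so $\pi(c)y$ is an isometry), and composing with the infinitely many orthogonal isometries available in $\mathcal{O}_{n+1}$ yields a sequence $y_k\in\mathcal{O}_{n+1}$ with $y_k^*\pi(c^*c)y_l=\delta_{kl}$. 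Lifting $y_k$ to $\tilde y_k\in C$, the elements $T_k:=c\tilde y_k$ satisfy $T_k^*T_l=\delta_{kl}1_C+r_{kl}$ with $r_{kl}\in A\otimes\mathbb{K}$. I will then promote the $T_k$ to honest orthogonal isometries $\tilde V_k$ with $\tilde V_k-T_k\in A\otimes\mathbb{K}$; once this is done, taking a rank-one projection $p\in\mathbb{K}$ and setting $E_{kl}:=\tilde V_k(1_A\otimes p)\tilde V_l^*$ gives exact matrix units in $A\otimes\mathbb{K}$, which can be arranged to lie in $\overline{c(A\otimes\mathbb{K})c^*}$ once the correction terms coming from $\tilde V_k-T_k\in A\otimes\mathbb{K}$ are absorbed.

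The central technical obstacle is the orthogonalization of $T_k$ to $\tilde V_k$: enforcing $\tilde V_k^*\tilde V_l=\delta_{kl}$ exactly while keeping the difference in $A\otimes\mathbb{K}$ and ensuring the resulting matrix units land in $\overline{c(A\otimes\mathbb{K})c^*}$. This should combine functional calculus on the positive perturbation $T_k^*T_k\in 1_C+(A\otimes\mathbb{K})$ with an inductive Gram--Schmidt-style correction, leveraging Kuiper's theorem (Theorem~\ref{Kui}) and the approximate unit of projections of $A\otimes\mathbb{K}$ available since $A$ is unital. I expect this is where the bulk of the technical work lies.
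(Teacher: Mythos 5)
Your treatment of condition (1) is essentially fine (the assertion that every proper closed ideal $I\subsetneq C$ lies in $A\otimes\mathbb{K}$ does deserve a line: if $\pi(I)=\mathcal{O}_{n+1}$ then $I+A\otimes\mathbb{K}=C$, so $C/I\cong (A/J)\otimes\mathbb{K}$ would be a nonzero unital stable C*-algebra, which is impossible). The genuine gap is in condition (2), exactly at the step you flag as the central obstacle, and the additive-correction strategy cannot close it. If $\tilde V_k-T_k\in A\otimes\mathbb{K}$, then $E_{kl}=\tilde V_k(1_A\otimes p)\tilde V_l^*$ differs from $T_k(1_A\otimes p)T_l^*=c\,\tilde y_k(1_A\otimes p)\tilde y_l^*\,c^*$ by elements of $A\otimes\mathbb{K}$ which have no reason to lie in $\overline{c(A\otimes\mathbb{K})c^*}$; since that hereditary subalgebra is in general a proper subalgebra of $A\otimes\mathbb{K}$, ``absorbing'' $A\otimes\mathbb{K}$-corrections is simply not available, so the exact matrix units you produce need not sit inside $\overline{c(A\otimes\mathbb{K})c^*}$ at all. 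There are two further problems: Definition~\ref{PL} requires a single subalgebra $D$ that is simultaneously full, $\sigma$-unital and stable, whereas you verify fullness for $\overline{c(A\otimes\mathbb{K})c^*}$ and stability for a copy of $\mathbb{K}$ whose fullness is never addressed; and it is not clear that compact perturbations can even produce exact isometries with exactly orthogonal ranges, since $T_k^*T_k=1+r_{kk}$ need not be invertible.

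The fix is to correct on the right by multipliers instead of additively, because right multiplication preserves the form $c\cdot(\text{element of }\mathcal{M}(A\otimes\mathbb{K}))$, and this is what the paper does. From $x\in C$ with $1-x^*c^*cx\in A\otimes\mathbb{K}$ (your lift, with a single $y$), one writes $1-x^*c^*cx=f-h$ with $f,h\geq 0$, gets $(1+h)^{-1/2}x^*c^*cx(1+h)^{-1/2}=1-f$, approximates $f$ within $1/2$ by $f_0\in A\otimes\mathbb{M}_N$, and conjugates by an isometry $1_A\otimes V\in\mathcal{M}(A\otimes\mathbb{K})$ whose range projection is $1_A\otimes(1_{\mathcal{M}(\mathbb{K})}-1_N)$; this kills $f_0$ exactly, makes $(1_A\otimes V)^*(1-f)(1_A\otimes V)$ invertible, and produces $y\in\mathcal{M}(A\otimes\mathbb{K})$ with $y^*c^*cy=1$. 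Then $w:=cy$ is an isometry of $\mathcal{M}(A\otimes\mathbb{K})$ and $D:=w(A\otimes\mathbb{K})w^*\subset c(A\otimes\mathbb{K})c^*$ is $\sigma$-unital, stable and full (this is the argument of Lin cited at the end of the paper's proof). In particular your infinite family of isometries is unnecessary: one exact isometry of the form $cy$ already yields the required $D$.
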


\begin{proof}
We denote $I :=1_C=1_{\mathcal{M}(A\otimes\mathbb{K})}$.
%
%
%
For $c\in C\backslash A\otimes\mathbb{K}$,
we show that there is an element $y\in\mathcal{M}(A\otimes\mathbb{K})$ with $y^*c^*cy=I$.
Since $\mathcal{O}_{n+1}$ is purely infinite and simple, there exists $x\in C$ satisfying $I-x^*c^*cx\in A\otimes\mathbb{K}$.
Let $f, h\in (A\otimes\mathbb{K})_+$ be the positive part and the negative part of the selfadjoint element $I-x^*c^*cx=f-h\in A\otimes\mathbb{K}$.
Since $(I+h)^{1/2}$ is invertible and $fh=hf=0$, one has $(I+h)^{-1/2}x^*c^*cx(I+h)^{-1/2}=I-f$.
One has $0\leq f\leq I$ because $(I+h)^{-1/2}x^*c^*cx(I+h)^{-1/2}$ is positive.
There exists a number $N$ and a positive contraction $f_0\in ((A\otimes\mathbb{M}_N)_+)_1\subset A\otimes\mathbb{K}$ with $||f_0-f||\leq 1/2$.
For the rank $N$ projection $1_N\in \mathbb{M}_N\subset\mathbb{K}$,
we have $(I-f_0)(I-(1_A\otimes 1_N))=I-(1_A\otimes 1_N)=1_A\otimes(1_{\mathcal{M}(\mathbb{K})}-1_N)$.
One can take an isometry $V\in \mathcal{M}(\mathbb{K})$ with $(1_A\otimes V)(1_A\otimes V)^*=1_A\otimes(1_{\mathcal{M}(\mathbb{K})}-1_N)$.
Direct computation yields
\begin{align*}
(1_A\otimes V)^*(I-f_0)(1_A\otimes V)&=(1_A\otimes V)^*(I-f_0)(I-(1_A\otimes 1_N))(1_A\otimes V)\\
&=(1_A\otimes V)^*(I-(1_A\otimes 1_N))(1_A\otimes V)\\
&=I.
\end{align*}
Therefore we have 
\begin{align*}
||(1_A\otimes V)^*(1-f)(1_A\otimes V)-I||&=||(1_A\otimes V)^*(1-f)(1_A\otimes V)-(1_A\otimes V)^*(1-f_0)(1_A\otimes V)||\\
&\leq ||f_0-f||\\
&\leq1/2.
\end{align*}
The selfadjoint element $(1_A\otimes V)^*(1-f)(1_A\otimes V)$ is invertible,
and $y^*c^*cy=I$ holds for $y :=x(1+h)^{-1/2}(1_A\otimes V)\{(1_A\otimes V)^*(1-f)(1_A\otimes V)\}^{-1/2}$.
Now the argument in \cite[p 425]{Lin} proves the statement.
\end{proof}

The proof of \cite[Lemma 2.14]{ST} shows the following corollary.
\begin{cor}\label{abscor}
Let $A$ be a unital separable C*-algebra, and let $\tau_i : \mathcal{O}_{n+1}\to \mathcal{Q}(A\otimes\mathbb{K}),\, i=1, 2$ be two unital extensions.
If $[\tau_1]=[\tau_2]\in \operatorname{Ext}(\mathcal{O}_{n+1}, A\otimes\mathbb{K})$, we have $\tau_1\sim_{w.u.e}\tau_2$.
\end{cor}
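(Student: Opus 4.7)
The plan is to apply Theorem \ref{EKA} (Elliott--Kucerovsky absorption), invoking the preceding Lemma to verify its pure-largeness hypothesis. By that Lemma, both $\tau_1$ and $\tau_2$ are unital purely large extensions. Unpacking the hypothesis $[\tau_1]=[\tau_2]$ together with the definition of $\sim_s$, I obtain trivial extensions $\rho_1,\rho_2:\mathcal{O}_{n+1}\to\mathcal{Q}(A\otimes\mathbb{K})$ and a unitary $U\in\mathcal{M}(A\otimes\mathbb{K})$ with $\tau_1\oplus\rho_1=\operatorname{Ad}\pi(U)\circ(\tau_2\oplus\rho_2)$, where the direct sums are formed using fixed isometries $V_1,V_2\in\mathcal{M}(A\otimes\mathbb{K})$ satisfying $V_1V_1^*+V_2V_2^*=1$.

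Because Theorem \ref{EKA} demands a \emph{unital} lift, I would first promote each $\rho_i$ to a unital trivial extension $\hat\rho_i$. Lift $\rho_i$ to a (possibly non-unital) $*$-homomorphism $\tilde\sigma_i:\mathcal{O}_{n+1}\to\mathcal{M}(A\otimes\mathbb{K})$. The image projection $p_i:=\tilde\sigma_i(1)$ is properly infinite in $\mathcal{M}(A\otimes\mathbb{K})$ because the Cuntz isometries $S_1,\ldots,S_{n+1}$ map to orthogonal isometries in $p_i\mathcal{M}(A\otimes\mathbb{K})p_i$ summing to $p_i$. Using that $\mathcal{M}(A\otimes\mathbb{K})$ itself is properly infinite (by stability of $A\otimes\mathbb{K}$), one sees that $1-p_i$ is either zero or again properly infinite, and in the latter case it carries a unital $*$-homomorphism $\sigma_i^c:\mathcal{O}_{n+1}\to(1-p_i)\mathcal{M}(A\otimes\mathbb{K})(1-p_i)$. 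Then $\hat\sigma_i:=\tilde\sigma_i+\sigma_i^c$ is a unital lift, and $\hat\rho_i:=\pi\circ\hat\sigma_i$ is a unital trivial extension extending $\rho_i$ by the complementary unital piece $\pi\sigma_i^c$.

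Theorem \ref{EKA} applied to the pair $(\tau_i,\hat\sigma_i)$ now gives $\tau_i\oplus\hat\rho_i\sim_{s.u.e}\tau_i$ for $i=1,2$. Adding the complementary pieces $\pi\sigma_1^c$ and $\pi\sigma_2^c$ on the appropriate sides of $\tau_1\oplus\rho_1\sim_{s.u.e}\tau_2\oplus\rho_2$ and reordering the direct-sum summands (which only costs a weak unitary equivalence) upgrades the given equivalence to $\tau_1\oplus\hat\rho_1\sim_{w.u.e}\tau_2\oplus\hat\rho_2$. Chaining the three equivalences $\tau_1\sim_{s.u.e}\tau_1\oplus\hat\rho_1\sim_{w.u.e}\tau_2\oplus\hat\rho_2\sim_{s.u.e}\tau_2$ delivers $\tau_1\sim_{w.u.e}\tau_2$.

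The main obstacle is the last bookkeeping step: one must verify that augmenting the original $\sim_{s.u.e}$-equivalence by the complementary pieces $\pi\sigma_1^c,\pi\sigma_2^c$ on the correct sides, together with a reordering of direct-sum summands, does produce a $\sim_{w.u.e}$-equivalence between $\tau_1\oplus\hat\rho_1$ and $\tau_2\oplus\hat\rho_2$. This is precisely the technical maneuver performed in the proof of \cite[Lemma 2.14]{ST} that the author cites.
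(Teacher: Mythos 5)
Your overall strategy (make the trivial summands unital, then absorb them into the purely large unital extensions via Theorem \ref{EKA}) is the right one and is presumably close to what \cite[Lemma 2.14]{ST} does, but two of your steps do not hold as written. First, the claim that $1-p_i$ is ``either zero or again properly infinite'' is false: take $A=\mathbb{C}$ and lift $\rho_i$ to a unital embedding of $\mathcal{O}_{n+1}$ into $B(K)$ for a closed subspace $K\subset H$ of finite nonzero codimension; then $1-p_i$ is a nonzero finite-rank projection. Even when $1-p_i$ is properly infinite, the existence of a \emph{unital} copy of $\mathcal{O}_{n+1}$ in $(1-p_i)\mathcal{M}(A\otimes\mathbb{K})(1-p_i)$ is not automatic: it forces $n[1-p_i]_0=0$ in $K_0$ of that corner, which fails, e.g., when $1-p_i\in A\otimes\mathbb{K}$. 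The standard repair is to first enlarge the trivial summand (e.g.\ add the unital trivial extension $1_A\otimes\iota$ with $\iota:\mathcal{O}_{n+1}\to B(H)$, equivalently pass to $\mathbb{M}_2(\mathcal{M}(A\otimes\mathbb{K}))$), after which the complement projection is full, properly infinite and has trivial class in $K_0(\mathcal{M}(A\otimes\mathbb{K}))=0$ (Theorem \ref{Kui}), so Cuntz's comparison theory for properly infinite projections does give the desired unital complement $\sigma_i^c$.

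Second, and more seriously, the concluding step is not ``bookkeeping,'' and as set up it would fail. The complements $\sigma_1^c$ and $\sigma_2^c$ are unrelated homomorphisms, and the unitary $U$ implementing $\tau_1\oplus\rho_1=\operatorname{Ad}\pi(U)\circ(\tau_2\oplus\rho_2)$ carries the support projection of $\sigma_2^c$ only \emph{modulo} $A\otimes\mathbb{K}$ onto that of $\sigma_1^c$; no reordering of direct summands turns the given $\sim_{s.u.e}$ into $\tau_1\oplus\hat\rho_1\sim_{w.u.e}\tau_2\oplus\hat\rho_2$. Moreover, if you transport one complement by $\operatorname{Ad}\pi(U)$ and try to absorb it on the other side, the transported summand sits over a multiplier projection that is only approximately complementary, and after correcting by a corona unitary the resulting summand is no longer visibly a \emph{trivial} extension, so Theorem \ref{EKA} cannot be invoked without further argument (one must correct the support mismatch by a partial isometry between the exact complement projections, using again that they are full, properly infinite and $K_0$-trivial, and then re-verify unitality and triviality of the combined summand before absorbing). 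This is precisely the content of the proof the paper delegates to \cite[Lemma 2.14]{ST}; by deferring it you have left the essential part of the corollary unproved, and the intermediate claims you do supply need the corrections above.
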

We refer to \cite[Proposition 2.17]{ST} for the proof of the following proposition.
\begin{prop}\label{k}
For a unital C*-algebra $A$ with $\operatorname{Tor}(K_0(A), \mathbb{Z}_n)=K_1(A)=0$ and a unital extension $\sigma : \mathcal{O}_{n+1}\to\mathcal{Q}(\mathbb{K}\otimes A)$,
we have $K_1(\sigma(\mathcal{O}_{n+1})'\cap\mathcal{Q}(\mathbb{K}\otimes A))=0$.
\end{prop}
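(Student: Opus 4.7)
The plan is to reduce an arbitrary unital extension to a concrete Toeplitz-type representative via absorption and then compute $K_1$ of the relative commutant for each representative directly. First, I would identify $\operatorname{Ext}(\mathcal{O}_{n+1},A\otimes\mathbb{K})$ using Theorem~\ref{ext}: since $K_0(\mathcal{O}_{n+1})=\mathbb{Z}_n$, $K_1(\mathcal{O}_{n+1})=0$ and $K_1(A)=0$, both summands $\operatorname{Hom}(K_i(\mathcal{O}_{n+1}),K_{i+1}(A))$ vanish, yielding $\operatorname{Ext}(\mathcal{O}_{n+1},A\otimes\mathbb{K})\cong\operatorname{Ext}^1_{\mathbb{Z}}(\mathbb{Z}_n,K_0(A))\cong K_0(A)/nK_0(A)$. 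By the preceding lemma every unital extension $\sigma:\mathcal{O}_{n+1}\to\mathcal{Q}(\mathbb{K}\otimes A)$ is purely large, so Corollary~\ref{abscor} gives $\sigma_1\sim_{w.u.e}\sigma_2$ whenever $[\sigma_1]=[\sigma_2]$ in $\operatorname{Ext}$. Conjugation by a unitary in $\mathcal{Q}(\mathbb{K}\otimes A)$ sends the relative commutant of $\sigma_1(\mathcal{O}_{n+1})$ isomorphically onto that of $\sigma_2(\mathcal{O}_{n+1})$, so the $K_1$-group in the statement depends only on the Ext-class, and it suffices to verify the conclusion for one representative per class in $K_0(A)/nK_0(A)$.

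Second, I would take as representatives the extensions $\sigma_p$ built from projections: since $\mathcal{M}(\mathbb{K}\otimes A)$ is properly infinite via Theorem~\ref{Kui}, for any projection $p\in\mathbb{K}\otimes A$ one can choose isometries $s_1,\ldots,s_{n+1}\in\mathcal{M}(\mathbb{K}\otimes A)$ with $1-\sum_{i=1}^{n+1}s_is_i^*=p$ and set $\sigma_p(S_i):=\pi(s_i)$. The six-term sequence of $0\to\mathbb{K}\otimes A\to C^*(\mathbb{K}\otimes A,s_1,\ldots,s_{n+1})\to\mathcal{O}_{n+1}\to 0$ identifies $[\sigma_p]$ with $[p]_0\bmod n$, so as $[p]_0$ ranges over $K_0(A)$ the classes $[\sigma_p]$ exhaust the group. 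Applying Theorem~\ref{Kui} to $0\to\mathbb{K}\otimes A\to\mathcal{M}(\mathbb{K}\otimes A)\to\mathcal{Q}(\mathbb{K}\otimes A)\to 0$ gives $K_0(\mathcal{Q}(\mathbb{K}\otimes A))\cong K_1(A)=0$ and $K_1(\mathcal{Q}(\mathbb{K}\otimes A))\cong K_0(A)$. The relative commutant $\mathcal{C}_p:=\sigma_p(\mathcal{O}_{n+1})'\cap\mathcal{Q}(\mathbb{K}\otimes A)$ is the fixed-point subalgebra of the canonical endomorphism $\phi(x):=\sum_{i=1}^{n+1}\pi(s_i)x\pi(s_i)^*$, which acts as multiplication by $n+1$ on $K_1(\mathcal{Q}(\mathbb{K}\otimes A))$. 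Hence any class in the image of $K_1(\mathcal{C}_p)\to K_1(\mathcal{Q}(\mathbb{K}\otimes A))\cong K_0(A)$ is annihilated by $n$, and the hypothesis $\operatorname{Tor}(K_0(A),\mathbb{Z}_n)=0$ forces this image to vanish.

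The main obstacle, following the strategy of \cite[Proposition~2.17]{ST}, is upgrading the vanishing of the image of $K_1(\mathcal{C}_p)\to K_1(\mathcal{Q}(\mathbb{K}\otimes A))$ to vanishing of $K_1(\mathcal{C}_p)$ itself, i.e.\ proving injectivity of this map. A unitary $u\in\mathcal{C}_p$ that is null-homotopic in $\mathcal{Q}(\mathbb{K}\otimes A)$ need not a priori be null-homotopic within $\mathcal{C}_p$, so one must modify the given null-homotopy. To close this gap I would absorb $\sigma_p$ into a sufficiently large trivial extension via Theorem~\ref{EKA} and combine the $K_1$-injectivity of $\mathcal{Q}(\mathbb{K}\otimes A)$ recorded in Section~2 with the pure infiniteness and simplicity of $\mathcal{O}_{n+1}$ to deform any null-homotopy of $u$ into one commuting with $\sigma_p(\mathcal{O}_{n+1})$; this is where the hypothesis $K_1(A)=0$ plays its decisive role.
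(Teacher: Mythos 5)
Your first two paragraphs are essentially sound: the computation $\operatorname{Ext}(\mathcal{O}_{n+1},A\otimes\mathbb{K})\cong K_0(A)/nK_0(A)$, the transfer of the relative commutant along a weak unitary equivalence, the identification of $\sigma(\mathcal{O}_{n+1})'\cap\mathcal{Q}(\mathbb{K}\otimes A)$ with the fixed points of $\phi(x)=\sum_i\sigma(S_i)x\sigma(S_i)^*$, and the fact that $\phi$ acts as multiplication by $n+1$ on $K_1(\mathcal{Q}(\mathbb{K}\otimes A))\cong K_0(A)$ are all correct (two small remarks: projections in $\mathbb{K}\otimes A$ only realize classes in $K_0(A)_+$, so surjectivity onto $K_0(A)/nK_0(A)$ needs the observation that $x+kn[1_A]\in K_0(A)_+$ for large $k$; and the fixed-point argument applies verbatim to the given $\sigma$, so the reduction to the representatives $\sigma_p$ buys nothing for what you actually establish). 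But all of this only proves that the composite $K_1(\sigma(\mathcal{O}_{n+1})'\cap\mathcal{Q}(\mathbb{K}\otimes A))\to K_1(\mathcal{Q}(\mathbb{K}\otimes A))$ is zero, which is strictly weaker than the assertion $K_1(\sigma(\mathcal{O}_{n+1})'\cap\mathcal{Q}(\mathbb{K}\otimes A))=0$.

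The genuine gap is your third paragraph: the injectivity of $K_1(\sigma(\mathcal{O}_{n+1})'\cap\mathcal{Q}(\mathbb{K}\otimes A))\to K_1(\mathcal{Q}(\mathbb{K}\otimes A))$ (equivalently, that a unitary in the commutant which is null-homotopic in $\mathcal{Q}(\mathbb{K}\otimes A)$ is trivial in $K_1$ of the commutant) is the entire content of the proposition, and you only restate it as a goal. Saying you would ``absorb $\sigma_p$ into a large trivial extension and deform any null-homotopy of $u$ into one commuting with $\sigma_p(\mathcal{O}_{n+1})$'' names the difficulty rather than resolving it: $K_1$-injectivity of the corona and pure infiniteness of $\mathcal{O}_{n+1}$ do not, by themselves, let you push a path of unitaries into the relative commutant, and you never exhibit how $K_1(A)=0$ enters. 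The paper itself does not reprove this statement but cites \cite[Proposition 2.17]{ST}; the mechanism behind results of this type is Paschke-type duality for absorbing unital extensions (this is exactly where Theorem \ref{EKA} and the purely large lemma are used), which identifies $K_1(\sigma(\mathcal{O}_{n+1})'\cap\mathcal{Q}(\mathbb{K}\otimes A))$ with $KK(\mathcal{O}_{n+1},A)$; the UCT then gives $KK(\mathcal{O}_{n+1},A)=0$ because $\operatorname{Hom}(\mathbb{Z}_n,K_0(A))\cong\{x\in K_0(A):nx=0\}=0$ and $\operatorname{Ext}^1_{\mathbb{Z}}(\mathbb{Z}_n,K_1(A))=0$ --- note that both hypotheses are used precisely here, whereas your image argument used only the torsion-freeness. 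As written, your proposal would have to reprove this duality (or an equivalent commutant-deformation lemma) to close the argument, so the proof is incomplete at its decisive step.
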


We introduce strongly self-absorbing C*-algebras (see \cite{TW}).
\begin{dfn}
A unital separable C*-algebra $D\not =\mathbb{C}$ is called strongly self-absorbing if there exists an isomorphism $\phi : D\to D\otimes D$ and a sequence of unitaries $u_n\in D\otimes D$ with $\lim_{n\to\infty}||\phi(d)-u_n(d\otimes 1_D)u_n^*||=0$ for every $d\in D$.
\end{dfn} 
For the set $P_n$ of all prime numbers $p$ with $\operatorname{GCD}(n, p)=1$,
let $M_{(n)}$ be the UHF algebra
$$M_{(n)} : =\bigotimes_{p\in P_n}\mathbb{M}_{p^{\infty}}.$$
C*-algebras $M_{(n)}$ and $M_{(n)}\otimes \mathcal{O}_{\infty}$ are strongly self-absorbing.
\begin{thm}\label{ssa}
Let $D$ be a strongly self-absorbing C*-algebra, then the following holds $\colon$

1) There is a sequence of unital $*$-homomorphisms $\phi_n : D\to D$ with $\lim_{n\to \infty}||\phi_n(d)x-x\phi_n(d)||=0$ for every $x, d\in D$ (see \cite{TW}).

2) The algebra $D$ is $\mathcal{Z}$-stable and it is $K_1$-injective (see \cite{W, R2}).
Furthermore, the map $U(A\otimes D)/ \sim_h\to K_1(A\otimes D)$ is bijective for a separable unital C*-algebra $A$ (see \cite{XJ}). 

3) Consider $\alpha, \beta \in\operatorname{Hom}(D, A\otimes D)_u$ for a unital separable C*-algebra $A$.
There is a continuous path of unitaries $\{u_t\}_{t\in [0, 1)}\subset A\otimes D$ satisfying $u_0=1$ and $\lim_{t\to 1}||u_t\alpha(d)u_t^*-\beta(d)||=0$ for every $d\in D$ (see \cite{DW}).
\end{thm}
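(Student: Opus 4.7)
The statement packages three facts each attributed to a separate paper, so my plan is to indicate how each one is established rather than to produce a fully self-contained proof.

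For (1), I would write $\phi_n$ down explicitly using the data in the definition of strong self-absorption. Set
\[
\phi_n(d):=\phi^{-1}\bigl(u_n^*(1\otimes d)u_n\bigr)\in D,
\]
which is a unital $*$-homomorphism. For $x\in D$ one computes
\[
\phi\bigl(\phi_n(d)x-x\phi_n(d)\bigr)=u_n^*\bigl[(1\otimes d),\,u_n\phi(x)u_n^*\bigr]u_n,
\]
and since $u_n\phi(x)u_n^*\to x\otimes 1$ in norm while $[1\otimes d,\,x\otimes 1]=0$, the inner commutator tends to zero. As $\phi$ is isometric, the desired asymptotic centrality follows.

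For (2), $\mathcal{Z}$-stability of every strongly self-absorbing C*-algebra is Winter's theorem (obtained by building approximately central embeddings $\mathcal{Z}\to D$ out of the $u_n$), and $K_1$-injectivity of $D$ is Rørdam's result. The bijection $U(A\otimes D)/\!\sim_h\,\to K_1(A\otimes D)$ is a refinement requiring (1): the approximately central endomorphisms $\phi_n$ make $A\otimes D$ $K_1$-surjective from $U(A\otimes D)$ (any class in a matrix amplification can be conjugated back to a unitary of $A\otimes D$), while $K_1$-injectivity gives the injectivity half; the details are the content of \cite{XJ}.

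For (3), I would run a one-sided Elliott intertwining scheme. Fix a dense sequence $\{d_k\}\subset D$ and a summable tolerance $\{\varepsilon_k\}$. Using strong self-absorption of $D$ together with (1), the restrictions $\alpha|_{\{d_1,\dots,d_k\}}$ and $\beta|_{\{d_1,\dots,d_k\}}$ become unitarily equivalent up to $\varepsilon_k$ in $A\otimes D$, and the implementing unitary $v_k$ can be chosen arbitrarily close to the previous one $v_{k-1}$ on the already controlled finite set. By the $K_1$-bijectivity from (2), successive unitaries $v_{k-1}^*v_k$ lie in $U_0(A\otimes D)$ and can be joined to $1$ by short paths; the concatenation of these paths defines the required continuous family $\{u_t\}_{t\in[0,1)}$ with $u_0=1$ and $\lim_{t\to 1}\|u_t\alpha(d)u_t^*-\beta(d)\|=0$. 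The main technical obstacle is controlling the lengths of the connecting paths so that the concatenation genuinely extends continuously to the half-open interval, which is precisely where the $K_1$-injectivity of $A\otimes D$ and the approximate centrality from (1) have to be invoked in tandem.
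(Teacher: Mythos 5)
This statement is a summary of known results that the paper does not prove at all --- it simply cites \cite{TW}, \cite{W}, \cite{R2}, \cite{XJ} and \cite{DW} --- so your sketch is necessarily ``extra'' material; judged on its own terms it has one concrete error and one real gap. In (1), your formula is inconsistent with the convention you (and the paper) state. Strong self-absorption gives $\|\phi(x)-u_n(x\otimes 1_D)u_n^*\|\to 0$, i.e.\ $u_n^*\phi(x)u_n\to x\otimes 1_D$; it does \emph{not} give $u_n\phi(x)u_n^*\to x\otimes 1_D$, which is what your commutator estimate needs. With your choice $\phi_n(d)=\phi^{-1}\bigl(u_n^*(1\otimes d)u_n\bigr)$ the identity you display produces $u_n^*\bigl[(1\otimes d),\,u_n\phi(x)u_n^*\bigr]u_n$, and there is no control on $u_n\phi(x)u_n^*$. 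The fix is trivial: set $\phi_n(d):=\phi^{-1}\bigl(u_n(1\otimes d)u_n^*\bigr)$, so that $\phi\bigl(\phi_n(d)x-x\phi_n(d)\bigr)=u_n\bigl[(1\otimes d),\,u_n^*\phi(x)u_n\bigr]u_n^*$, which tends to $u_n\bigl[(1\otimes d),\,x\otimes 1_D\bigr]u_n^*=0$.

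In (3), the step you label ``the main technical obstacle'' is in fact the entire content of the cited result in \cite{DW}, and the tools you invoke do not close it as stated: knowing $v_{k-1}^*v_k\in U_0(A\otimes D)$ (via $K_1$-injectivity) lets you connect consecutive unitaries by \emph{some} path, but an arbitrary path from $1$ to $v_{k-1}^*v_k$ need not approximately intertwine $\alpha$ and $\beta$ on $\{d_1,\dots,d_k\}$ along its whole length, so the concatenated family $u_t$ need not satisfy $\|u_t\alpha(d)u_t^*-\beta(d)\|\to 0$ as $t\to 1$. One must arrange the correcting unitaries to lie (approximately) in the relative commutant of the image of $\beta$ and connect them to $1$ \emph{inside} that commutant --- which works because the commutant again absorbs $D$, so its unitary group is connected with the needed control --- and this is where approximate centrality genuinely enters, not merely through $K_1$-injectivity of $A\otimes D$. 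As an outline deferring these details to \cite{DW} (and the nonstable $K_1$-statement to \cite{XJ}, $\mathcal{Z}$-stability and $K_1$-injectivity to \cite{W}, \cite{R2}) your attributions match the paper's; but part (1) needs the sign fix above, and part (3) as written is a plan rather than a proof.
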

Let $\prod_{\mathbb{N}}A$ be the C*-algebra of all bounded sequences of $A$ and let $\bigoplus_{\mathbb{N}}A$ be the subalgebra of the sequences converging to $0$.
For a unital C*-algebra $A$, we introduce the central sequence algebra
$$A_\infty : =\left(\prod_{\mathbb{N}}A/ \bigoplus_{\mathbb{N}}A\right)\cap A',$$
where $A$ is embedded in $(\prod_{\mathbb{N}}A/ \bigoplus_{\mathbb{N}}A)$ by the constant sequence map $A\ni a\mapsto (a)_n\in \prod_{\mathbb{N}}A$.
\begin{thm}[{\cite[Theorem 2.2]{TW}}]\label{abs}
For $A$ separable unital and $D$ strongly self-absorbing,
an isomorphism $A\cong A\otimes D$ holds if and only if there is a unital embedding $D\to A_\infty.$
\end{thm}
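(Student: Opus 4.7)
I treat the two implications separately. For the \emph{only if} direction, assume $A\cong A\otimes D$ and fix such an identification. Theorem~\ref{ssa}(1) furnishes unital *-homomorphisms $\phi_n:D\to D$ with $\|\phi_n(d)x-x\phi_n(d)\|\to 0$ for all $x,d\in D$. The formulas $\rho_n(d)=1_A\otimes\phi_n(d)$ define unital *-homomorphisms $D\to A\otimes D$ whose images commute exactly with $A\otimes 1_D$ and asymptotically with $1_A\otimes D$; by density of elementary tensors, the sequence $(\rho_n)$ is asymptotically central in $A\otimes D\cong A$. Passing to the quotient gives a unital *-homomorphism $D\to A_\infty$, which is injective by simplicity of $D$.

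The \emph{if} direction carries the substantive content. Let $\psi:D\to A_\infty$ be a unital embedding. Since $\psi(D)$ commutes with the diagonal image of $A$ inside $\prod_{\mathbb{N}} A/\bigoplus_{\mathbb{N}} A$ and $D$ is nuclear (a standard property of strongly self-absorbing $C^*$-algebras), the prescription $a\otimes d\mapsto a\cdot\psi(d)$ extends to a unital *-homomorphism $\Psi:A\otimes D\to \prod_{\mathbb{N}}A/\bigoplus_{\mathbb{N}}A$ that restricts to the diagonal embedding on $A\otimes 1_D$. By the Choi--Effros lifting theorem, $\Psi$ admits ucp lifts $\Psi_n:A\otimes D\to A$ that are asymptotically multiplicative on any prescribed finite subset and satisfy $\Psi_n(a\otimes 1_D)=a$; set $\psi_n(d):=\Psi_n(1_A\otimes d)$.

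I then run a two-sided Elliott approximate intertwining between the canonical embedding $\iota:A\to A\otimes D$, $a\mapsto a\otimes 1_D$, and the lifts $\Psi_n$. The identity $\Psi_n\circ\iota=\mathrm{id}_A$ handles one composition for free, whereas $\iota\circ\Psi_n(a\otimes d)\approx a\psi_n(d)\otimes 1_D$ is not close to $a\otimes d$. Here I invoke Theorem~\ref{ssa}(3) applied to the two unital *-homomorphisms $D\to A\otimes D$ given by $d\mapsto\psi_n(d)\otimes 1_D$ and $d\mapsto 1_A\otimes d$, obtaining unitaries $u_n\in A\otimes D$ that asymptotically conjugate the former to the latter on prescribed finite subsets of $D$. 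Replacing $\iota$ by $\mathrm{Ad}(u_n)\circ\iota$, both compositions converge point-normwise to the identity modulo summable errors, and Elliott's criterion assembles a genuine isomorphism $A\cong A\otimes D$.

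The main obstacle is the inductive book-keeping: one must coordinate increasing finite subsets of dense sequences in $A$ and $D$, summable tolerances, the ucp lifts $\Psi_n$, and the unitaries $u_n$ so that adjustments at stage $n$ do not destroy the control established earlier. This is the standard but delicate portion of the Toms--Winter argument, relying crucially on the approximate unitary equivalence of $d\mapsto d\otimes 1$ and $d\mapsto 1\otimes d$ inside $D\otimes D\cong D$ guaranteed by strong self-absorption.
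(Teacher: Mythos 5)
The paper itself gives no proof of this statement—it is imported verbatim from \cite{TW}—so I am measuring your argument against the standard Toms--Winter/R{\o}rdam proof. Your ``only if'' direction is correct and is the usual one. In the ``if'' direction there are two genuine gaps. First, Choi--Effros does not apply to $\Psi\colon A\otimes D\to\prod_{\mathbb{N}}A/\bigoplus_{\mathbb{N}}A$: that theorem lifts \emph{nuclear} c.p.c.\ maps (equivalently, arbitrary c.p.c.\ maps from a separable \emph{nuclear} domain), and here $A$ is an arbitrary separable unital C*-algebra, so $A\otimes D$ need not be nuclear and there is no reason for $\Psi$ to be a nuclear map; moreover no lifting theorem hands you the exact identity $\Psi_n(a\otimes 1_D)=a$, only its asymptotic version. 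The correct move is to lift only $\psi\colon D\to A_\infty$, where nuclearity of $D$ (automatic for strongly self-absorbing algebras) makes Choi--Effros legitimate; this yields u.c.p.\ maps $\psi_n\colon D\to A$ that are asymptotically multiplicative and asymptotically central, and no globally defined u.c.p.\ map on $A\otimes D$ is ever needed.

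Second, and more seriously, you apply Theorem~\ref{ssa}(3) to $d\mapsto\psi_n(d)\otimes 1_D$, which is not a $*$-homomorphism ($\psi_n$ is only approximately multiplicative), so the theorem does not apply as stated; and even if it did, the unitaries it produces come with no control on their commutators with $A\otimes 1_D$. That control is exactly what your intertwining needs: once you replace $\iota$ by $\operatorname{Ad}(u_n)\circ\iota$, the composition $\Psi_n\circ\operatorname{Ad}(u_n)\circ\iota$ is no longer ``free''---you must know $\|[u_n,a\otimes 1_D]\|$ is small on the prescribed finite subset of $A$, and this is precisely where the approximate centrality of the $\psi_n$ has to be used. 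The standard repair is to pass to $\prod_{\mathbb{N}}(A\otimes D)/\bigoplus_{\mathbb{N}}(A\otimes D)$, where $(\psi_n(\cdot)\otimes 1_D)_n$ and the constant copy $1_A\otimes(\cdot)$ become two honest unital $*$-homomorphisms of $D$ with exactly commuting images, both commuting with the constant copy of $A\otimes 1_D$; the approximately inner half-flip of $D$ then gives conjugating unitaries lying in the C*-algebra generated by the two images, hence unitaries that automatically almost commute with $A\otimes 1_D$ while carrying $1_A\otimes d$ approximately onto $\psi_m(d)\otimes 1_D$. Feeding these into Elliott's one-sided approximate intertwining criterion gives $A\cong A\otimes D$. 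As written, your argument skips exactly these two points, so it does not yet constitute a proof.
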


\subsection{Some notions in the homotopy theory}
For pointed topological spaces $(X, x_0)$ and $(Y, y_0)$, we denote the set of the continuous maps from $X$ to $Y$ by $\operatorname{Map}(X, Y)$ and denote by $\operatorname{Map}_0(X, Y)$ the subset of $\operatorname{Map}(X, Y)$ consisting of the base point preserving maps.
We denote the homotopy set $\operatorname{Map}(X, Y)/\sim_h$ by $[X, Y]$ and denote $\operatorname{Map}_0(X, Y)/\sim_h$ by $[X, Y]_0$.
The $i$-th homotopy group is denoted by $\pi_i(X, x_0) : =[S^i, X]_0$.
A pointed space $(Y, y_0)$ is an $H$-space if there is a continuous map $m : Y\times Y\to Y$ such that the two maps $y\mapsto m(y, y_0)$ and $y\mapsto m(y_0, y)$ are homotopic to ${\rm id}_Y$ by the base point preserving homotopy.
We remark that the natural map $[X, Y]_0\to [X, Y]$ is bijective for a path connected $H$-space $(Y, y_0)$ and a pointed space $(X, x_0)$ with a non-degenerate base point $x_0$, in particular, for a pointed CW complex (see \cite[6.16, p 159]{AT}).
\begin{dfn}[{\cite[p 117]{AT}}]
Let $X, Y$ and $Z$ be topological spaces, and let $\pi:X\to Y$ be a continuous map.
The map $\pi$ has the homotopy lifting property (abbreviated to HLP) for $Z$, if every commuting diagram
$$
\scriptsize
\xymatrix{
\{0 \} \times Z \ar[r]^{g}\ar[d] &X\ar[d]^{\pi} \\
[0, 1]\times Z\ar[r]^{f}& Y,
}
$$
admits a continuous map $\tilde{g}\colon [0,1]\times Z\to X$ satisfying $\tilde{g}(0,z)=g(z)$ and $\pi \circ\tilde{g}=f$.
The map $\pi$ is a Serre fibration, if it has HLP for every $n$-disc.
\end{dfn}
For example, every locally trivial fiber bundle is a Serre fibration.
We refer to \cite[Lemma 2.8, 2.16, Corollary 2.9]{DP} for the proof of the following lemma.
\begin{lem}\label{se1}
Let $D$ be a unital strongly self-absorbing C*-algebra with $K_1(D)=0$.
Consider the set
$$U(E_{n+1}\otimes D)((1-e)\otimes 1_D) : =\{u((1-e)\otimes 1_D) \in E_{n+1}\otimes D\; | \;u\in U(E_{n+1}\otimes D)\}.$$
The map 
$$p \colon U(E_{n+1}\otimes D)\ni u\mapsto u((1-e)\otimes 1_D)\in U(E_{n+1}\otimes D)((1-e)\otimes 1_D)$$
gives a locally trivial fiber bundle with a fiber homeomorphic to $U(D)$.
\end{lem}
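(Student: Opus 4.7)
The plan is to identify the fiber of $p$ over $P := (1-e)\otimes 1_D$ as $U(D)$ and then establish local triviality by producing continuous local sections via a polar-decomposition argument applied to close projections. Throughout, we view $p$ as the orbit map of the left multiplication action of $U(E_{n+1}\otimes D)$ on the partial isometry $P$, and we write $1-P = e\otimes 1_D$.

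First I would compute the fiber. A unitary $w \in U(E_{n+1}\otimes D)$ satisfies $wP = P$ if and only if, in the block decomposition with respect to $P$ and $1-P$, it has the form $w = P + w_0$ with $w_0 \in U((1-P)(E_{n+1}\otimes D)(1-P))$ (the block $PwP = P$ is forced by $wP=P$, $P'wP=0$ by the same equation, and $PwP'=0$ by unitarity). Since $e$ is a minimal projection in the ideal $\mathbb{K}\subset E_{n+1}$, one has $eE_{n+1}e = \mathbb{C}e$, so the corner $(1-P)(E_{n+1}\otimes D)(1-P)$ equals $e\otimes D$, whose unitary group is canonically homeomorphic to $U(D)$ via $d\mapsto e\otimes d$. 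By left translation, the fiber over every orbit point is identified with $U(D)$.

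Next I would build a continuous local section on a neighborhood of $P$. Every element $V$ in the image satisfies $V^*V = P$ and is therefore a partial isometry with range projection $Q := VV^*$; if $\|V-P\|$ is small then $\|Q-P\| < 1$. For such $V$, the standard Banach--Kato construction produces the unitary
$$U_Q := F_Q(F_Q^*F_Q)^{-1/2}, \qquad F_Q := QP + (1-Q)(1-P),$$
depending norm-continuously on $Q$, with $U_P = 1$ and $U_Q P U_Q^* = Q$. Setting
$$W(V) := V + U_Q(1-P),$$
a short computation using $V^*(1-Q) = V^* - V^*VV^* = V^* - PV^* = 0$ shows that the cross terms in $W(V)^*W(V)$ and $W(V)W(V)^*$ vanish, so $W(V)$ is unitary; and clearly $W(V)P = VP + 0 = V$. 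Thus $W$ is a continuous local section of $p$ in a neighborhood $\mathcal{U}$ of $P$.

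Finally, the section $W$ gives a homeomorphism $\mathcal{U}\times U(D)\xrightarrow{\sim} p^{-1}(\mathcal{U})$, $(V,s)\mapsto W(V)s$, with inverse $u\mapsto (uP, W(uP)^{-1}u)$, which trivializes the bundle near $P$. Around an arbitrary orbit point $V_0 = u_0 P$ the same recipe works after pre-composing with left multiplication by $u_0^{-1}$. The only delicate step is the unitarity check for $W(V)$, whose crux is the algebraic identity $V^*(1-Q)=0$; everything else is the routine homogeneous-space scheme and does not itself use the strongly self-absorbing assumption or $K_1(D)=0$, which are needed elsewhere in the argument.
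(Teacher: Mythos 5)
Your argument is correct, and it is essentially the argument the paper points to: the paper gives no proof of its own but defers to Dadarlat--Pennig (Lemma 2.8, 2.16, Corollary 2.9 of \cite{DP}), where local triviality is obtained in the same way, by comparing the range projection $Q=VV^*$ with $P=(1-e)\otimes 1_D$ and using the canonical unitary $U_Q=F_Q(F_Q^*F_Q)^{-1/2}$ conjugating nearby projections to produce continuous local sections, with the fiber computed as the stabilizer $\{P+e\otimes d\mid d\in U(D)\}\cong U(D)$ via $eE_{n+1}e=\mathbb{C}e$. One small remark: the vanishing of the cross terms uses the intertwining relation $QU_Q=U_QP$ together with $V^*=V^*Q$, both of which you have established, so there is no gap; you are also right that strong self-absorption and $K_1(D)=0$ enter only later (for connectedness of the total space and base), not in this lemma.
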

Since $K_1(E_{n+1}\otimes D)=0$, Theorem \ref{ssa} shows that the group $U(E_{n+1}\otimes D)$ and the set $U(E_{n+1}\otimes D)((1_{E_{n+1}}-e)\otimes 1_D)$  are path connected.

\begin{lem}\label{se}
A surjection $\pi\in\operatorname{Hom}(A_1, A_2)_u$ gives a Serre fibration  $\pi : U(A_1)\to U(A_2)$.
\end{lem}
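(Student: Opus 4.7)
The plan is to verify the homotopy lifting property for each disc $D^n$ directly. Given a continuous map $g\colon D^n\to U(A_1)$ and a homotopy $f\colon [0,1]\times D^n\to U(A_2)$ with $f(0,z)=\pi(g(z))$, a standard reduction lets us replace $f$ by $w(t,z):=f(t,z)\pi(g(z))^{-1}$, which satisfies $w(0,z)=1$: once a lift $\tilde w$ of $w$ with $\tilde w(0,z)=1$ is constructed, $\tilde g(t,z):=\tilde w(t,z)g(z)$ gives the desired lift of $f$.

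The key technical ingredient is a continuous selection of self-adjoint lifts. Since $\pi$ is a surjective unital $*$-homomorphism, its restriction $\pi\colon (A_1)_{\mathrm{sa}}\to (A_2)_{\mathrm{sa}}$ is a continuous real-linear surjection between real Banach spaces (surjectivity follows from $\pi((a+a^*)/2)=b$ whenever $\pi(a)=b=b^*$), so the Bartle--Graves selection theorem provides a continuous section $s\colon (A_2)_{\mathrm{sa}}\to (A_1)_{\mathrm{sa}}$ with $s(0)=0$. By uniform continuity of $w$ on the compact set $[0,1]\times D^n$, choose an integer $N$ so that $\|w(t,z)w(k/N,z)^{-1}-1\|<2-\delta$ for some fixed $\delta>0$, all $z$, all $k$, and all $t\in[k/N,(k+1)/N]$. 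The principal branch of the logarithm then yields continuous self-adjoint elements $h_k(t,z)\in A_2$ satisfying $\exp(ih_k(t,z))=w(t,z)w(k/N,z)^{-1}$ and $h_k(k/N,z)=0$, so $\tilde u_k(t,z):=\exp(is(h_k(t,z)))$ is a continuous family of unitaries in $A_1$ with $\pi(\tilde u_k(t,z))=w(t,z)w(k/N,z)^{-1}$ and $\tilde u_k(k/N,z)=1$.

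Finally, concatenate: for $t\in[k/N,(k+1)/N]$, define
$$\tilde w(t,z):=\tilde u_k(t,z)\,\tilde u_{k-1}(k/N,z)\cdots \tilde u_0(1/N,z).$$
The identity $\tilde u_k(k/N,z)=1$ guarantees continuity across the breakpoints, and a telescoping calculation gives $\pi(\tilde w(t,z))=w(t,z)$ together with $\tilde w(0,z)=1$. The only nontrivial input is the Bartle--Graves continuous selection on self-adjoint parts; everything else is a routine patching argument that is available because the parameter space $D^n$ is compact (so the holomorphic functional calculus applies uniformly).
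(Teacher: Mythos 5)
Your argument is correct and follows the paper's strategy: both proofs first replace $f$ by $f\,\pi(1\otimes g^{*})$ (your $w$), reducing to lifting a homotopy of unitaries that starts at $1$, and then recover the lift of $f$ by multiplying back by $g$. The paper handles the lifting step by invoking the standard fact that a unitary in $U_0(C([0,1]\times\mathbb{D}^n)\otimes A_2)$ lifts to a unitary in $U_0(C([0,1]\times\mathbb{D}^n)\otimes A_1)$ equal to $1$ at $t=0$, whereas you reprove that fact by hand via subdivision, functional-calculus logarithms and a Bartle--Graves selection; this is fine, though the selection theorem can be avoided by lifting each self-adjoint $h_k$ directly through the surjective $*$-homomorphism $C(K)\otimes A_1\to C(K)\otimes A_2$ and taking the self-adjoint part of a preimage.
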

\begin{proof}
For two continuous maps $f : [0, 1]\times \mathbb{D}^n\to U(A_2)$ and $g : \mathbb{D}^n\to U(A_1)$ with $f(0, x)=\pi(g(x)), x\in \mathbb{D}^n$,
we identify $f$ (resp. $g$) with a unitary of $U(C([0, 1]\times\mathbb{D}^n)\otimes A_2)$ (resp. $U(C(\mathbb{D}^n)\otimes A_1)$).
Since $f\pi(1_{C([0, 1])}\otimes g^*)\in U_0(C([0, 1]\times\mathbb{D}^n)\otimes A_2)$, there is a lift $u\in U_0(C([0, 1]\times\mathbb{D}^n)\otimes A_1)$ of $f\pi(1_{C([0, 1])}\otimes g^*)$ with $u(0, x)=1_{A_1}, x\in \mathbb{D}^n$.
The unitary $u(1_{C([0, 1])}\otimes g)\in U(C([0, 1]\times\mathbb{D}^n)\otimes A_1)$ gives a lift of $f$,
and the map $\pi$ has HLP for every $n$-disc. 
\end{proof}
\begin{thm}[{\cite[Corollary 6.44]{AT}}]\label{ex}
For a Serre fibration $\pi :(X, x_0)\to (Y, y_0)$ with the fibre $F:=\pi^{-1}(y_0)$,
there is a long exact sequence of groups $(i\geq 1)$, and exact sequence of pointed sets $(i\geq 0)$
$$\dotsm\to \pi_i(F, x_0)\to\pi_i(X, x_0)\to \pi_i(Y, y_0)\to \dotsm \to \pi_0(F, x_0)\to\pi_0(X, x_0)\to \pi_0(Y, y_0).$$
\end{thm}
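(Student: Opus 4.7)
The plan is to construct the connecting homomorphism $\partial \colon \pi_i(Y, y_0) \to \pi_{i-1}(F, x_0)$ via repeated application of the HLP, and then verify exactness at each of the three types of positions $\pi_i(F)$, $\pi_i(X)$, $\pi_i(Y)$. The two other maps in the sequence are induced by the inclusion $\iota \colon F \hookrightarrow X$ and by $\pi$ itself.

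To define $\partial$, I represent a class in $\pi_i(Y, y_0)$ by $g \colon (I^i, \partial I^i) \to (Y, y_0)$ and parametrize $I^i = I \times I^{i-1}$. Applying the HLP with $Z = I^{i-1}$ to the diagram whose initial map $\{0\} \times I^{i-1} \to X$ is the constant $x_0$, I obtain $\tilde g \colon I \times I^{i-1} \to X$ with $\pi \circ \tilde g = g$. Since $g$ carries $\{1\} \times I^{i-1}$ to $y_0$, the restriction $\tilde g(1, \cdot)$ already lands in $F$. A further HLP argument, deforming the side faces $I \times \partial I^{i-1}$ (on which $g$ is constant at $y_0$) into $F$, allows one to arrange that $\tilde g(1, \cdot)$ also sends $\partial I^{i-1}$ to $x_0$, so that it represents a class in $\pi_{i-1}(F, x_0)$. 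Independence of the choice of lift is proved by running the same procedure on a homotopy between two lifts.

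Exactness is then checked position by position. At $\pi_i(X)$: $\pi_* \circ \iota_* = 0$ since $\pi \circ \iota$ is constant, and conversely if $[h] \in \pi_i(X)$ satisfies $\pi_*[h] = 0$ via a nullhomotopy $H \colon I \times I^i \to Y$, the HLP applied with initial data $h$ yields a lift that is a homotopy of $h$ to a map into $F$. At $\pi_i(Y)$: if $[g] = \pi_*[\tilde h]$ then $\tilde h$ serves as a lift and $\partial [g]$ is represented by $\tilde h(1, \cdot)$, which extends over the whole cube and is thus nullhomotopic in $F$; conversely, if $\partial[g] = 0$, a nullhomotopy of $\tilde g(1, \cdot)$ in $F$ glues to $\tilde g$ to produce a lift defined on all of $I^i$, showing $[g] \in \operatorname{im}(\pi_*)$. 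Exactness at $\pi_{i-1}(F)$ is analogous, again by converting a nullhomotopy in $X$ into an extension of the lift.

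The main obstacle is the low-degree end, where $\pi_0(F), \pi_0(X), \pi_0(Y)$ are only pointed sets without group structure: one must interpret exactness in the pointed-set sense and track base points carefully throughout the lifting arguments. A secondary technical point is that HLP is stated for disks whereas the argument above is phrased for cubes; this is harmless thanks to the homeomorphism $(I^i, \{0\} \times I^{i-1}) \cong (D^i, D^{i-1})$, but the bookkeeping of which faces of the cube get collapsed to $y_0$ versus $x_0$ has to be handled consistently when invoking HLP iteratively.
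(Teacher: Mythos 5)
Your outline is correct: it is the classical direct proof of the fibration sequence --- build the connecting map $\partial\colon\pi_i(Y,y_0)\to\pi_{i-1}(F,x_0)$ by lifting cubes via the HLP, then check exactness at the three kinds of positions by hand, with exactness only in the pointed-set sense at the low-degree end. Note, however, that the paper contains no proof of this statement at all: it is quoted from \cite[Corollary 6.44]{AT}, where the route is different. There one first shows that for a fibration $\pi_*\colon\pi_i(X,F,x_0)\to\pi_i(Y,y_0)$ is an isomorphism for $i\geq 1$ (the compression/lifting argument is concentrated in this one statement), and then substitutes this isomorphism into the long exact sequence of the pair $(X,F)$; the connecting map is the boundary $\pi_i(X,F,x_0)\to\pi_{i-1}(F,x_0)$ transported through the isomorphism, and exactness comes for free from the pair sequence. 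Your direct route avoids relative homotopy groups and makes $\partial$ explicit (which is what one actually uses in computations), but it pays in bookkeeping, and one step of yours deserves sharpening: the ``further HLP argument, deforming the side faces into $F$'' is really the homotopy lifting extension property for the pair, i.e.\ one should choose the lift of $g$ to be constant at $x_0$ on all of $\{0\}\times I^{i-1}\cup I\times\partial I^{i-1}$, which follows from the Serre HLP via the homeomorphism of pairs $(I^i,\{0\}\times I^{i-1}\cup I\times\partial I^{i-1})\cong(I^i,\{0\}\times I^{i-1})$; without arranging this, $\tilde g(1,\cdot)$ need not represent a class in $\pi_{i-1}(F,x_0)$, and the same device is needed for the well-definedness of $\partial$ and in the exactness checks. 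With that point made explicit, your argument goes through and is a legitimate, more self-contained alternative to the cited textbook derivation.
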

For a topological group $G$,
the set of isomorphism classes of principal $G$-bundles over $X$ is identified with the homotopy set $[X, \operatorname{B}G]$, where $\operatorname{B}G$ is the classifying space, and we denote by $\operatorname{E}G\to\operatorname{B}G$ the universal principal $G$-bundle.
For example, for $H=l^2$, the space $H_1=\{f\in H\mid ||f||_{2}=1\}$ is contractible and is a model of $ES^1$ with free $S^1$ action by the scalar multiplication (see \cite{RW}). 
Identifying $\operatorname{B}S^1$ with the set of all minimal projections, the map $\operatorname{E}S^1=H_1\ni \xi \mapsto \xi\otimes\xi^*\in\operatorname{B}S^1$ gives the universal bundle where we denote by $\xi \otimes \xi^*$ the operator $H\ni x\mapsto \langle x, \xi\rangle\xi\in H$. 
Fix a base point $e\in \operatorname{B}S^1$.
The map $\operatorname{Aut}(\mathbb{K})\ni \alpha \mapsto \alpha(e) \in \operatorname{B}S^1$ is a homotopy equivalence, and the following Dixmier--Douady theorem holds.
\begin{thm}[{\cite{DD, CS, RW}}]
For a finite CW complex $X$,
the set $[X, \operatorname{BAut}(\mathbb{K})]$, identified with the set of isomorphism classes  of the locally trivial continuous fields of $\mathbb{K}$,
has a multiplication given by the fiber wise tensor product of the fields, and it is isomorphic to $H^3(X)$.
\end{thm}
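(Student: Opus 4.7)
The plan is to combine the homotopy equivalence $\operatorname{Aut}(\mathbb{K})\simeq \operatorname{B}S^1$ just recalled with the standard identification of iterated classifying spaces as Eilenberg--MacLane spaces. I note that $\operatorname{B}S^1=\mathbb{C}P^\infty=K(\mathbb{Z},2)$, so $\operatorname{Aut}(\mathbb{K})$ is weakly equivalent to a topological abelian group. Taking classifying spaces on both sides, I obtain $\operatorname{BAut}(\mathbb{K})\simeq \operatorname{B}K(\mathbb{Z},2)=K(\mathbb{Z},3)$. Brown representability then yields
\[
[X,\operatorname{BAut}(\mathbb{K})]\;\cong\;[X,K(\mathbb{Z},3)]\;\cong\;H^3(X;\mathbb{Z}).
\]

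Next, I would establish the bijection between $[X,\operatorname{BAut}(\mathbb{K})]$ and isomorphism classes of locally trivial continuous fields of $\mathbb{K}$ over $X$. Every such field has an associated principal $\operatorname{Aut}(\mathbb{K})$-bundle of local trivializations, and conversely every principal $\operatorname{Aut}(\mathbb{K})$-bundle yields a $\mathbb{K}$-field via the associated bundle construction with fiber $\mathbb{K}$ acted on by $\operatorname{Aut}(\mathbb{K})$. Since homotopy classes of maps $X\to \operatorname{BAut}(\mathbb{K})$ classify principal $\operatorname{Aut}(\mathbb{K})$-bundles on the finite CW complex $X$, the two sets coincide.

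For the multiplication, I would use that any choice of unitary isomorphism $H\otimes H\cong H$ induces an isomorphism $\mathbb{K}\otimes\mathbb{K}\cong\mathbb{K}$, turning $\operatorname{Aut}(\mathbb{K})$ into an H-space via $(\alpha,\beta)\mapsto \alpha\otimes\beta$; fiberwise tensor product of $\mathbb{K}$-fields is induced by this H-space structure. Under $\operatorname{Aut}(\mathbb{K})\simeq \operatorname{B}S^1$, the map $\alpha\mapsto \alpha(e)$ carries this product to the tensor product of rank-one projections, i.e. to the usual tensor product of lines in $\mathbb{C}P^\infty$, which is the standard abelian H-space structure on $K(\mathbb{Z},2)$. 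Delooping once more, this corresponds to the abelian group structure of $K(\mathbb{Z},3)$ and hence to addition in $H^3(X;\mathbb{Z})$, completing the isomorphism of abelian groups.

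The main obstacle is verifying compatibility of the H-space structures under the homotopy equivalence $\alpha\mapsto\alpha(e)$: one must check that the tensor product $\alpha\otimes\beta$, followed by the collapse $\mathbb{K}\otimes\mathbb{K}\cong\mathbb{K}$, is sent (up to homotopy and independently of the chosen isomorphism $H\otimes H\cong H$) to the rank-one projection $\alpha(e)\otimes\beta(e)$ viewed in $\operatorname{B}S^1$. This reduces to the elementary fact that the tensor product of two rank-one projections is again rank one, together with contractibility of the space of isomorphisms $H\otimes H\cong H$, which removes the ambiguity.
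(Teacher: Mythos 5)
Your argument is correct and is essentially the standard Dixmier--Douady proof that the paper itself does not reprove but cites (\cite{DD, CS, RW}), resting on exactly the ingredient recorded just before the statement, namely the homotopy equivalence $\operatorname{Aut}(\mathbb{K})\ni\alpha\mapsto\alpha(e)\in\operatorname{B}S^1=K(\mathbb{Z},2)$, followed by delooping and the tensor-product $H$-space structure. The only step to phrase more carefully is the delooping: instead of ``taking classifying spaces of a weak equivalence'' (which is not a functorial operation on non-equivariant weak equivalences), observe that $\operatorname{BAut}(\mathbb{K})$ is simply connected with $\pi_i(\operatorname{BAut}(\mathbb{K}))\cong\pi_{i-1}(\operatorname{Aut}(\mathbb{K}))$, hence weakly equivalent to $K(\mathbb{Z},3)$, which suffices for $[X,-]$ with $X$ a finite CW complex -- a cosmetic adjustment, not a gap.
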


\subsection{The Dadarlat--Pennig theory}
Using the isomorphism $\phi : D\cong D\otimes D$,
one gives $K_0(C(X)\otimes D)$ a ring structure whose multiplication comes from the following map
$$\Delta_X : (C(X)\otimes D)^{\otimes 2}\ni f_1(x)\otimes f_2(y)\mapsto \phi^{-1}(f_1(x)\otimes f_2(x))\in C(X)\otimes D$$
(see \cite[Section 2.4]{DP}).
We denote the set of the invertible elements by $K_0(C(X)\otimes D)^\times$.
\begin{thm}[{\cite[Theorem 2.22, 3.8, Lemma 2.8]{DP}}]\label{DPT}
Let $D\not =\mathbb{C}$ be a strongly self-absorbing C*-algebra and let $X$ be a compact metrizable space.
We denote by $\operatorname{Aut}_0(\mathbb{K}\otimes D)$ the path component of $\operatorname{Aut}(\mathbb{K}\otimes D)$ containing ${\rm id}_{\mathbb{K}\otimes D}$ and denote by $\mathcal{P}_0(\mathbb{K}\otimes D)$ the set of projections which are homotopy equivalent to $e\otimes 1_D$.

1) For two continuous maps $\alpha, \beta : X\to \operatorname{Aut}(\mathbb{K}\otimes D)$,
which are identified with the $C(X)$-linear isomorphisms of $C(X)\otimes\mathbb{K}\otimes D$,
one has
$$K_0(\Delta_X)\circ K_0(\alpha\otimes\beta)([(1_{C(X)}\otimes e\otimes 1_D)^{\otimes 2}]_0)=K_0(\alpha\circ\beta)([1_{C(X)}\otimes e\otimes 1_D]_0),$$
and the following map is multiplicative $\colon$
$$[X, \operatorname{Aut}(\mathbb{K}\otimes D)]\ni[\alpha]\mapsto [\alpha(e\otimes 1_D)]_0\in K_0(C(X)\otimes \mathbb{K}\otimes D).$$
The image of the map is $K_0(C(X)\otimes D)_+\cap K_0(C(X)\otimes D)^\times$.

2) The following map $\eta$ is a homotopy equivalence and induces an isomorphism $\eta_*$ for connected $X$$\colon$
$$\eta : \operatorname{Aut}_0(\mathbb{K}\otimes D)\ni \alpha\mapsto\alpha(e\otimes 1_D)\in \mathcal{P}_0(\mathbb{K}\otimes D),$$
$$\eta_* : [X, \operatorname{Aut}_0(\mathbb{K}\otimes D)]\to 1+K_0(C_0(X, x)\otimes D).$$

3) The homotopy set $E^1_D(X) : =[X, \operatorname{BAut}(\mathbb{K}\otimes D)]$ has a group structure defined by the fiber wise tensor product of the locally trivial continuous fields of $\mathbb{K}\otimes D$.
\end{thm}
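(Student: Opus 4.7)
The plan is to handle the three parts in sequence: Part 1 is essentially a $K$-theoretic naturality computation, Part 2 hinges on a Serre-fibration-plus-contractible-stabilizer argument, and Part 3 follows from an $H$-space structure upgraded to a group structure via the $K$-theoretic reformulation of Part 1.

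For Part 1, I would unpack the displayed identity. The ring multiplication on $K_0(C(X)\otimes D)$ is by definition $K_0(\Delta_X)$ applied to external products, so the left-hand side of the claimed equation is exactly the product $[\alpha(e\otimes 1_D)]_0\cdot [\beta(e\otimes 1_D)]_0$ in the ring. The right-hand side reflects the multiplication on $[X,\operatorname{Aut}(\mathbb{K}\otimes D)]$ obtained by fiberwise tensoring $\alpha$ and $\beta$ and absorbing back through the strongly self-absorbing isomorphism $\phi$. With this dictionary, the identity reduces to a naturality diagram chase: the diagram built from $\alpha\otimes\beta$ and $\Delta_X$ commutes up to the identifications in the ring structure, using $C(X)$-linearity and $D$-bilinearity. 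Multiplicativity of $[\alpha]\mapsto [\alpha(e\otimes 1_D)]_0$ follows. For the image characterisation, positivity is immediate (a projection class), and invertibility comes from multiplicativity applied to $\alpha^{-1}$. Conversely, any positive invertible class can be realised by a projection in $C(X)\otimes \mathbb{K}\otimes D$ unitarily equivalent to $1_{C(X)}\otimes e\otimes 1_D$ after a stabilisation, using Theorem \ref{ssa} to lift unitaries; this yields the required $\alpha$.

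For Part 2, I realise $\eta$ as the orbit map of $\operatorname{Aut}_0(\mathbb{K}\otimes D)$ acting transitively on $\mathcal{P}_0(\mathbb{K}\otimes D)$, producing a Serre fibration
$$\operatorname{Stab}(e\otimes 1_D)\longrightarrow \operatorname{Aut}_0(\mathbb{K}\otimes D)\stackrel{\eta}{\longrightarrow} \mathcal{P}_0(\mathbb{K}\otimes D).$$
The stabilizer consists of automorphisms preserving $e\otimes 1_D$, which decompose according to the corner decomposition along $e\otimes 1_D$ into an automorphism of $e\mathbb{K}e\otimes D\cong D$, one of $(1-e)\mathbb{K}(1-e)\otimes D\cong \mathbb{K}\otimes D$, and an off-diagonal part. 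A Kuiper-style contraction, in the spirit of Theorem \ref{Kui}, shrinks the stabilizer to a point, giving that $\eta$ is a weak equivalence; since both spaces have CW homotopy type in the Dadarlat--Pennig framework, it is a genuine homotopy equivalence. The induced $\eta_*$ then identifies homotopy classes of projections in $C(X)\otimes \mathbb{K}\otimes D$ homotopic to $1_{C(X)}\otimes e\otimes 1_D$ with $1+K_0(C_0(X,x)\otimes D)$ by standard $K$-theoretic identification of path components of projections over a connected base.

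For Part 3, an isomorphism $\mathbb{K}\otimes D\otimes \mathbb{K}\otimes D\cong \mathbb{K}\otimes D$ (from $\mathbb{K}\otimes\mathbb{K}\cong\mathbb{K}$ combined with $\phi$) yields a fiberwise tensor product of $\mathbb{K}\otimes D$-bundles, translating to a continuous multiplication on $\operatorname{BAut}(\mathbb{K}\otimes D)$. Associativity and commutativity up to homotopy follow from Theorem \ref{ssa}, giving an $H$-space structure. The genuinely nontrivial point is existence of inverses: via Part 1 and Part 2 the component group of $E^1_D(X)$ embeds into the unit group $K_0(C(X)\otimes D)^\times$, and an inverse bundle is constructed as the conjugate bundle, with the identification $\mathrm{Bundle}\otimes \overline{\mathrm{Bundle}}\simeq \mathrm{trivial}$ justified using the Eilenberg swindle and the absorption provided by $\mathbb{K}\otimes D$. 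I expect the hardest step to be the contractibility of $\operatorname{Stab}(e\otimes 1_D)$ in Part 2, which unlike the direct Kuiper theorem for $U(\mathcal{M}(\mathbb{K}\otimes D))$ demands a careful corner-wise contraction combined with $K_1$-injectivity of $\mathcal{M}(\mathbb{K}\otimes D)$; the inverse construction in Part 3 is also subtle, as it genuinely uses strong self-absorption rather than formal $H$-space properties.
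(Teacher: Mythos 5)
This statement is not proved in the paper at all: it is imported verbatim from Dadarlat--Pennig, cited as \cite[Theorem 2.22, 3.8, Lemma 2.8]{DP}, so there is no internal proof to compare with and your sketch has to be judged on whether it would actually establish the result. As it stands it has genuine gaps at each of the three points you yourself flag as delicate. In Part 1 you treat the displayed identity as a formal naturality "using $C(X)$-linearity and $D$-bilinearity", but $\alpha$ and $\beta$ are only $C(X)$-linear, not $D$-linear; the identity is equivalent to the nontrivial fact that $K_0(\alpha)$ acts on the ring $K_0(C(X)\otimes D)$ as multiplication by the invertible element $[\alpha(e\otimes 1_D)]_0$, and proving that requires the strongly self-absorbing machinery (asymptotic centrality and the unitary paths of Theorem \ref{ssa}), not a diagram chase. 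Your surjectivity argument also fails as written: a projection that is "unitarily equivalent to $1_{C(X)}\otimes e\otimes 1_D$ after a stabilisation" has $K_0$-class equal to $1$, so it can only produce the trivial element; realizing an arbitrary class in $K_0(C(X)\otimes D)_+\cap K_0(C(X)\otimes D)^\times$ needs the corner/Morita-type argument of Dadarlat--Pennig, where invertibility of the class is what lets one build the automorphism.

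In Part 2 the skeleton (transitive action, Serre fibration, weakly contractible stabilizer) is the right shape, but the contractibility of the stabilizer is not a Kuiper-type statement: an automorphism fixing $e\otimes 1_D$ induces an automorphism of the corner $(e\otimes 1_D)(\mathbb{K}\otimes D)(e\otimes 1_D)\cong D$, so contracting the stabilizer requires in particular the contractibility of $\operatorname{Aut}(D)$ for strongly self-absorbing $D$ --- a substantial theorem of Dadarlat--Pennig resting on Theorem \ref{ssa} 3) --- which Theorem \ref{Kui} and $K_1$-injectivity of $\mathcal{M}(\mathbb{K}\otimes D)$ do not supply. In Part 3 the proposed homotopy inverse via the conjugate bundle is unjustified: the conjugate of a $\mathbb{K}\otimes D$-bundle has fibre $\mathbb{K}\otimes D^{\rm op}$, which is not known to be isomorphic to $\mathbb{K}\otimes D$ for a general strongly self-absorbing $D$, and even when it is, the triviality of the fibrewise tensor product with the conjugate is exactly what has to be proved; moreover the claim that $E^1_D(X)$ "embeds into $K_0(C(X)\otimes D)^\times$" is false, since Parts 1 and 2 compute $[X,\operatorname{Aut}(\mathbb{K}\otimes D)]$, not $[X,\operatorname{BAut}(\mathbb{K}\otimes D)]$. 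The route actually taken in \cite{DP} is to show that $\operatorname{BAut}(\mathbb{K}\otimes D)$ is a connected, homotopy-associative H-space of the homotopy type of a CW complex (indeed an infinite loop space), from which the existence of homotopy inverses, and hence the group structure on $E^1_D(X)$, follows without constructing an explicit inverse bundle.
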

\subsection{The homotopy groups of $\operatorname{Aut}(\mathcal{O}_{n+1})$}
Using the well-known homeomorphism
$$u : \operatorname{End}(\mathcal{O}_{n+1})\ni \alpha\mapsto \sum_{i=1}^{n+1}\alpha(S_i)S_i^*\in U(\mathcal{O}_{n+1})$$
M. Dadarlat computed the homotopy groups of $\operatorname{Aut}(\mathcal{O}_{n+1})$.
\begin{thm}[{\cite[Theorem 1.1]{D2}}]\label{L}
The inclusion map $\operatorname{Aut}(\mathcal{O}_{n+1})\to \operatorname{End}(\mathcal{O}_{n+1})$ is a weak homotopy equivalence,
and we have $$\pi_{\rm odd}(\operatorname{Aut}(\mathcal{O}_{n+1}))=\pi_{\rm odd}(\operatorname{End}(\mathcal{O}_{n+1}))=\mathbb{Z}_n,\quad \pi_{\rm even}(\operatorname{Aut}(\mathcal{O}_{n+1}))=\pi_{\rm even}(\operatorname{End}(\mathcal{O}_{n+1}))=0.$$
\end{thm}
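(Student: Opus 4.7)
The plan is to carry out the proof in three stages: (i) reduce from $\operatorname{End}(\mathcal{O}_{n+1})$ to $U(\mathcal{O}_{n+1})$ via Cuntz's homeomorphism $u$; (ii) compute $\pi_*(U(\mathcal{O}_{n+1}))$ via Bott periodicity, exploiting the properly infinite structure of $\mathcal{O}_{n+1}$; (iii) show the inclusion $\operatorname{Aut}(\mathcal{O}_{n+1})\hookrightarrow\operatorname{End}(\mathcal{O}_{n+1})$ is a weak equivalence using Kirchberg--Phillips classification.

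Stage (i) is immediate: $u$ is a homeomorphism with continuous inverse $v\mapsto\alpha_v$ defined by $\alpha_v(S_i):=vS_i$, so $\pi_i(\operatorname{End}(\mathcal{O}_{n+1}))\cong\pi_i(U(\mathcal{O}_{n+1}))$ for every $i\geq 0$. For stage (ii), I would exploit that the projections $S_iS_i^*$ are mutually orthogonal, sum to $1$, and each is Murray--von Neumann equivalent to $1$ via $S_i$, giving a unital $*$-isomorphism $\mathcal{O}_{n+1}\cong \mathbb{M}_{n+1}(\mathcal{O}_{n+1})$, and inductively $\mathcal{O}_{n+1}\cong\mathbb{M}_{(n+1)^j}(\mathcal{O}_{n+1})$ for all $j$. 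Combined with the standard fact that for properly infinite unital C*-algebras the diagonal inclusion $U(A)\to U(\mathbb{M}_k(A))$ is a weak homotopy equivalence, this yields $\pi_i(U(\mathcal{O}_{n+1}))\cong\pi_i(U_\infty(\mathcal{O}_{n+1}))$. Bott periodicity then identifies the latter with $K_{i+1}(\mathcal{O}_{n+1})$ (indices mod $2$), and plugging in $K_0(\mathcal{O}_{n+1})=\mathbb{Z}_n$, $K_1(\mathcal{O}_{n+1})=0$ yields $\pi_{\rm odd}=\mathbb{Z}_n$, $\pi_{\rm even}=0$.

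For stage (iii), observe that every unital endomorphism $\alpha$ of $\mathcal{O}_{n+1}$ satisfies $\alpha_*([1])=[1]$ and so acts as the identity on $K_0(\mathcal{O}_{n+1})=\mathbb{Z}_n$ (generated by $[1]$); together with $K_1=0$ and the UCT this forces the $KK$-class of $\alpha$ to equal that of $\operatorname{id}_{\mathcal{O}_{n+1}}$, and Kirchberg--Phillips yields that $\alpha$ is approximately unitarily equivalent to $\operatorname{id}$. To upgrade this pointwise statement to a weak equivalence, given a continuous map $f:S^i\to\operatorname{End}(\mathcal{O}_{n+1})$ reinterpreted as a unital $*$-homomorphism $\mathcal{O}_{n+1}\to C(S^i)\otimes\mathcal{O}_{n+1}$, I would construct via a parameterized Kirchberg--Phillips argument a continuous family of unitaries $u_t\in C(S^i)\otimes\mathcal{O}_{n+1}$ homotoping $f$ to a representative landing in $\operatorname{Aut}(\mathcal{O}_{n+1})$.

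The main obstacle is this last parameterized step: pointwise approximate unitary equivalence does not a priori yield continuity in the parameter $s\in S^i$, so one must argue carefully. I would adapt Dadarlat's asymptotic morphism techniques (as in \cite{DP}) to build the continuous family $u_t$, possibly exploiting a Serre fibration structure analogous to Lemma \ref{se1} to lift the relevant homotopies.
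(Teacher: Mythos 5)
This statement is not proved in the paper at all: it is quoted verbatim from Dadarlat (\cite[Theorem 1.1]{D2}) and used as a black box, so the only fair comparison is with Dadarlat's original argument, which your outline essentially tries to reconstruct. Stages (i) and (ii) are sound in outline: the homeomorphism $u$ does reduce $\operatorname{End}(\mathcal{O}_{n+1})$ to $U(\mathcal{O}_{n+1})$, and $\pi_i(U(\mathcal{O}_{n+1}))\cong K_{i+1}(\mathcal{O}_{n+1})$ does hold; but the ``standard fact'' you invoke (that for properly infinite $A$ the corner inclusion $U(A)\to U(\mathbb{M}_k(A))$ is a weak homotopy equivalence) is not an off-the-shelf statement in that generality, and the isomorphism $\mathcal{O}_{n+1}\cong\mathbb{M}_{(n+1)^j}(\mathcal{O}_{n+1})$ by itself does not identify the corner embeddings with maps you control. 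What you actually need here is a genuine nonstable $K$-theory theorem — Zhang's computation of $\pi_*(U(A))$ for purely infinite simple $A$, or Jiang's result for $\mathcal{Z}$-stable algebras, which is exactly what the present paper invokes in Theorem \ref{ssa} 2) — and you should cite it rather than derive it from proper infiniteness.

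The genuine gap is stage (iii), which is the entire content of the assertion that $\operatorname{Aut}(\mathcal{O}_{n+1})\hookrightarrow\operatorname{End}(\mathcal{O}_{n+1})$ is a weak homotopy equivalence. Your pointwise argument ($K_0$ generated by $[1]$, UCT, hence $KK(\alpha)=KK(\mathrm{id})$, hence $\alpha\approx_u\mathrm{id}$ by Kirchberg--Phillips) is fine, but it says nothing about families: a map $S^i\to\operatorname{End}(\mathcal{O}_{n+1})$ is a unital $*$-homomorphism $\mathcal{O}_{n+1}\to C(S^i)\otimes\mathcal{O}_{n+1}$, and homotoping it into $\operatorname{Aut}$ requires a classification of such morphisms up to homotopy over $C(S^i)$ — precisely the fiberwise/asymptotic unitary equivalence machinery that Dadarlat develops in \cite{D2, D3}. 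Saying you ``would adapt Dadarlat's asymptotic morphism techniques'' defers the one step where all the work lies, so as written the proof is incomplete. Moreover, even granting that step, your sketch only addresses surjectivity of $\pi_i(\operatorname{Aut})\to\pi_i(\operatorname{End})$; injectivity requires the relative statement (a family over $S^i$ of automorphisms that bounds a disc of endomorphisms must bound a disc of automorphisms), which your outline never mentions. Compare with how the present paper handles the analogous problem one level up: for $E_{n+1}\otimes D$ it does not argue pointwise, but builds Serre fibrations (Lemmas \ref{se1}, \ref{se}, \ref{fib}), mapping-torus algebras and absorption results (Theorem \ref{EKA}, Corollary \ref{abscor}) to control whole families — that is the kind of input your stage (iii) is missing.
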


\section{The weak homotopy equivalence results}
Our goal in this section is to prove Corollary \ref{mt3}.
Throughout this section, we assume that $D$ is a strongly self-absorbing C*-algebra satisfying $\operatorname{Tor}(K_0(D))=K_1(D)=0$ and $\mathcal{O}_{n+1}\otimes D\cong \mathcal{O}_{m+1}$ for some $m\geq 1$.
Note that the above conditions automatically hold if $D$ satisfies the UCT (see \cite[Proposition 5.1]{TW}).
We denote by $\operatorname{End}_0(E_{n+1}\otimes D)$ the path component of $\operatorname{End}(E_{n+1}\otimes D)$  containing ${\rm id}_{E_{n+1}\otimes D}$.
Since $\beta(e\otimes 1_D)\sim_h e\otimes 1_D$ for $\beta\in \operatorname{End}_0(E_{n+1}\otimes D)$,
one has $\beta(\mathbb{K}\otimes D)\subset \mathbb{K}\otimes D$, and we have a semi-group homomorphism $$q : \operatorname{End}_0(E_{n+1}\otimes D)\ni \beta\mapsto \tilde{\beta}\in \operatorname{End}(\mathcal{O}_{n+1}\otimes D).$$
\subsection{The weak homotopy type of $\operatorname{End}_0(E_{n+1}\otimes D)$}
\begin{thm}\label{mt}
The following map is a weak homotopy equivalence $\colon$
$$\operatorname{End}_0(E_{n+1}\otimes D)\ni \beta\mapsto \sum_{i=1}^{n+1}\beta(T_i\otimes 1_D)(T_i^*\otimes 1_D)\in U(E_{n+1}\otimes D)((1_{E_{n+1}}-e)\otimes 1_D).$$
\end{thm}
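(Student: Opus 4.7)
The strategy is to construct an explicit homotopy inverse $\Psi$ to the map $\Phi(\beta):=\sum_{i=1}^{n+1}\beta(T_i\otimes 1_D)(T_i^*\otimes 1_D)$ using the strongly self-absorbing isomorphism $\phi\colon D\xrightarrow{\cong}D\otimes D$. The underlying idea is to place $v$ on the first two tensor factors of $E_{n+1}\otimes D\otimes D$ and a trivial $D$-action on the third, disjoint tensor factor, then transport back to $E_{n+1}\otimes D$ via $(\operatorname{id}_{E_{n+1}}\otimes\phi)^{-1}$; the approximate inner flexibility from Theorem \ref{ssa} will glue everything together.

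First, well-definedness of $\Phi$: direct calculation yields $\Phi(\beta)^*\Phi(\beta)=(1-e)\otimes 1_D$ and $\Phi(\beta)\Phi(\beta)^*=\beta((1-e)\otimes 1_D)$, which is homotopic to $(1-e)\otimes 1_D$ because $\beta\in\operatorname{End}_0$. Homotopic projections in $E_{n+1}\otimes D$ are unitarily equivalent via $U_0(E_{n+1}\otimes D)$ (using $K_1(E_{n+1}\otimes D)=0$ and Theorem \ref{ssa}.2), so the resulting partial isometry $\Phi(\beta)$ lies in $U(E_{n+1}\otimes D)((1-e)\otimes 1_D)$. Next, define $\Psi(v):=(\operatorname{id}\otimes\phi)^{-1}\circ\nu_v$, with $\nu_v\colon E_{n+1}\otimes D\to E_{n+1}\otimes D\otimes D$ the unital $*$-homomorphism sending $T_i\otimes 1_D\mapsto v(T_i\otimes 1_D)\otimes 1_D$ and $1_{E_{n+1}}\otimes d\mapsto 1_{E_{n+1}}\otimes 1_D\otimes d$; the two families of images live in mutually commuting tensor factors, so $\nu_v$ is a well-defined unital $*$-endomorphism of $E_{n+1}\otimes D$. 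A straightforward computation then gives
\[
\Phi(\Psi(v))=(\operatorname{id}_{E_{n+1}}\otimes\rho)(v),\qquad \rho(d):=\phi^{-1}(d\otimes 1_D).
\]
Theorem \ref{ssa} provides a path $\rho_t$ from $\operatorname{id}_D$ to $\rho$ in $\operatorname{End}(D)$, and the path $(\operatorname{id}\otimes\rho_t)(v)$ stays inside $U(E_{n+1}\otimes D)((1-e)\otimes 1_D)$ because both identities $v_t^*v_t=(1-e)\otimes 1_D$ and $v_tv_t^*\sim_u (1-e)\otimes 1_D$ are preserved under a unital $*$-homomorphism; this gives $\Phi\circ\Psi\sim\operatorname{id}$, and the same argument places $\Psi(v)$ in $\operatorname{End}_0$.

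The main obstacle is the converse homotopy $\Psi\circ\Phi\sim\operatorname{id}$. One computes $\Psi(\Phi(\beta))(T_i\otimes 1_D)=(\operatorname{id}\otimes\rho)(\beta(T_i\otimes 1_D))$ and $\Psi(\Phi(\beta))(1_{E_{n+1}}\otimes d)=1_{E_{n+1}}\otimes\rho'(d)$ with $\rho'(d):=\phi^{-1}(1_D\otimes d)$, which is typically far from $\beta|_{1\otimes D}$. The plan is a two-stage interpolation: first deform the $T_i$-part back to $\beta(T_i\otimes 1_D)$ via $\rho_t\to\operatorname{id}$, then deform the $D$-part from $1\otimes\rho'$ to $\beta|_{1\otimes D}$ by Theorem \ref{ssa} applied to these two unital $*$-homomorphisms $D\to E_{n+1}\otimes D$, which supplies a continuous path of unitaries realising the required conjugation. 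The difficulty is to keep each intermediate pair $(\gamma_t|_{E_{n+1}\otimes 1},\gamma_t|_{1\otimes D})$ commuting so that $\gamma_t$ remains an honest $*$-endomorphism in $\operatorname{End}_0$; one must therefore select the unitary path produced by Theorem \ref{ssa} inside the commutant of the deforming $E_{n+1}$-part at each time, which in turn exploits that this commutant absorbs $D$ tensorially for $\beta$ in the identity component.
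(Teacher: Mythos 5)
Your section $\Psi$ is well defined, the computation $\Phi(\Psi(v))=(\operatorname{id}_{E_{n+1}}\otimes\rho)(v)$ is correct, and the resulting homotopy $\Phi\circ\Psi\sim\operatorname{id}$ reproduces the surjectivity half of the paper's argument (its deformations $H_t,h_t$ and the endomorphisms $\alpha_w$ are exactly your $\rho_t$-deformation and your $\Psi$, up to the identification $D\cong D^{\otimes 2}$). The genuine gap is in the half that carries all the weight, namely $\Psi\circ\Phi\sim\operatorname{id}$. Theorem \ref{ssa}(3) produces a unitary path only for a \emph{fixed} pair of unital homomorphisms into a \emph{fixed separable} C*-algebra; it gives no continuity in the parameter $\beta$, so applying it pointwise to each $\beta$ cannot yield a homotopy of self-maps of $\operatorname{End}_0(E_{n+1}\otimes D)$ (the relevant algebra of functions on $\operatorname{End}_0$ is not separable), nor even a homotopy of maps from $S^k$, which is what injectivity of $\pi_k(\Phi)$ actually requires. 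To make the idea work one must package a whole family $\Gamma:S^k\to\operatorname{End}_0$ into a single separable $C(S^k)$-algebra: after the conjugation deformation, $H_0\circ\Gamma_x$ and $\alpha_{\Pi(\Gamma_x)}$ have literally the same restriction $\varrho_{h_0(\Pi(\Gamma))}$ to the $E_{n+1}$-part; one then forms $P_\Gamma=(\operatorname{Im}\varrho_{h_0(\Pi(\Gamma))})'\cap C(S^k)\otimes E_{n+1}\otimes D^{\otimes 2}$, notes $1\otimes 1\otimes D\subset P_\Gamma$ so that Theorem \ref{abs} gives $P_\Gamma\cong P_\Gamma\otimes D$, and only then applies Theorem \ref{ssa}(3) to the two $D^{\otimes2}$-parts viewed in $\operatorname{Hom}(D^{\otimes 2},P_\Gamma)_u$. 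This parametrized use of Theorem \ref{ssa} through the relative commutant is the essential step your outline is missing.

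Your proposed first interpolation stage is also broken as stated: moving the $T_i$-part by $\rho_t\to\operatorname{id}$ while the $D$-part sits at $1\otimes\rho'$ does not give $*$-homomorphisms at intermediate times, since for $t<1$ one has $\rho_t=\operatorname{Ad}u_t$, whose image meets the full second tensor factor and hence fails to commute with $1\otimes\rho'(D)$; and ``selecting the unitary path inside the commutant of the deforming $E_{n+1}$-part at each time'' is not available, because that commutant moves with $t$ and you have no continuous selection. The workable scheme (the paper's) never moves the two parts against each other: the only deformation of the $E_{n+1}$-part is post-composition with the honest endomorphisms $\operatorname{id}_{E_{n+1}}\otimes(\operatorname{Ad}v_t^*\circ(l\otimes 1_D))$, which preserves multiplicativity for free, after which the $E_{n+1}$-part is frozen and the entire remaining homotopy takes place in the $D$-direction inside the fixed commutant $P_\Gamma$; finally a null-homotopy of $\Pi\circ\Gamma$ is pushed forward through the continuous assignment $w\mapsto\alpha_w$ (your $\Psi$) to kill $\Gamma'$. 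So the blueprint is aligned with the paper, but as written the key step is a plan rather than a proof, and the claim of a global homotopy inverse should in any case be weakened to bijectivity on homotopy groups, which is all the theorem asserts.
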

Note that the above map is well-defined (see \cite[Lemma 2.11]{ST}).
By the isomorphism $l :=\phi^{-1} : D^{\otimes 2}\to D$,
it is enough to show that the following map is a weak homotopy equivalence :
$$\Pi : \operatorname{End}_0(E_{n+1}\otimes D^{\otimes 2})\ni \beta\mapsto \sum_{i=1}^{n+1}\beta(T_i\otimes 1_{D^{\otimes 2}})(T_i^*\otimes 1_{D^{\otimes 2}})\in U(E_{n+1}\otimes D^{\otimes 2})((1_{E_{n+1}}-e)\otimes 1_{D^{\otimes 2}}).$$
By Theorem \ref{ssa}, there is a path of unitaries $\{v_t\}_{t\in [0, 1)}\subset U(D^{\otimes 2})$ satisfying $v_0=1_{D^{\otimes 2}}$ and $\lim_{t\to 1}||\operatorname{Ad}v_t(d)-l(d)\otimes 1_D||=0$ for every $d\in D^{\otimes 2}$.
Let $H_t$ be the map
$$H_t : \operatorname{End}_0(E_{n+1}\otimes D^{\otimes 2})\ni \beta \mapsto ({\rm id}_{E_{n+1}}\otimes(\operatorname{Ad}v_t^*\circ(l\otimes 1_D))) \circ \beta \in \operatorname{End}_0(E_{n+1}\otimes D^{\otimes 2})$$
for $t\in [0, 1)$ and $H_1(\rho) :=\rho$.
Let $h_t$ be the map
$$h_t : U(E_{n+1}\otimes D^{\otimes 2})((1_{E_{n+1}}-e)\otimes 1_{D^{\otimes 2}})\to U(E_{n+1}\otimes D^{\otimes 2})((1_{E_{n+1}}-e)\otimes 1_{D^{\otimes 2}})$$
defined by $h_t(w) : =({\rm id}_{E_{n+1}}\otimes(\operatorname{Ad}v_t^*\circ(l\otimes 1_D)))(w)$ for $t\in [0, 1)$ and $h_1(w) :=w$.
One can check that $H_t(\beta)$ converges to $\beta$ as $t$ tends to $1$ in the point-wise norm topology,  $\lim_{t\to 1}||h_t(w)-w||=0$ and $\Pi\circ H_t=h_t\circ\Pi$.
For $w\in U(E_{n+1}\otimes D^{\otimes 2})((1_{E_{n+1}}-e)\otimes 1_{D^{\otimes 2}})$,
we define a map $\varrho_w\in \operatorname{Hom}(E_{n+1}, E_{n+1}\otimes D^{\otimes 2})_u$ by $\varrho_w(T_i) : =w(T_i\otimes 1_{D^{\otimes 2}})$.
Since $h_0(w)\in E_{n+1}\otimes D\otimes \mathbb{C}1_D$,
the image of $\varrho_{h_0(w)}$ is in $E_{n+1}\otimes D\otimes \mathbb{C}1_D$, and we have a $*$-endomorphism
$$\alpha_{w} : E_{n+1}\otimes D^{\otimes 2}\ni f\otimes d\mapsto \varrho_{h_0(w)}(f)(1_{E_{n+1}\otimes D}\otimes l(d))\in (E_{n+1}\otimes D)\otimes D.$$
Since $U(E_{n+1}\otimes D^{\otimes 2})((1_{E_{n+1}}-e)\otimes 1_{D^{\otimes 2}})$ is path connected,
Theorem \ref{ssa} implies ${\rm id}_{E_{n+1}\otimes D^{\otimes 2}}\sim_h{\rm id}_{E_{n+1}}\otimes 1_D\otimes l\sim_h \alpha_{w}$,
and one has $\alpha_{w}\in\operatorname{End}_0(E_{n+1}\otimes D^{\otimes 2})$ and $\Pi(\alpha_{w})=h_0(w)$.
\begin{lem}\label{he}
We have $h_t(w)\in \operatorname{Im}\Pi$ for $w\in U(E_{n+1}\otimes D^{\otimes 2})((1_{E_{n+1}}-e)\otimes 1_{D^{\otimes 2}})$, $t\in [0, 1)$.
The inclusion map $\operatorname{Im}\Pi\hookrightarrow U(E_{n+1}\otimes D^{\otimes 2})((1_{E_{n+1}}-e)\otimes 1_{D^{\otimes 2}})$
is a homotopy equivalence.
\end{lem}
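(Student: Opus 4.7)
The plan is to prove the first claim by explicitly writing down a preimage of $h_t(w)$ under $\Pi$, and then to use the family $\{h_t\}_{t\in[0,1]}$ itself as the deformation needed for the second claim.

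For the first claim, fix $w$ and $t\in[0,1)$ and set $\beta_w^t:=\operatorname{Ad}(1_{E_{n+1}}\otimes v_t^*)\circ\alpha_w$. Because $1_{E_{n+1}}\otimes v_t^*$ lies in $\mathbb{C}1_{E_{n+1}}\otimes D^{\otimes 2}$, it commutes with each $T_i^*\otimes 1_{D^{\otimes 2}}$, and one computes
\[
\Pi(\beta_w^t)=\operatorname{Ad}(1_{E_{n+1}}\otimes v_t^*)\bigl(\Pi(\alpha_w)\bigr)=\operatorname{Ad}(1_{E_{n+1}}\otimes v_t^*)(h_0(w))=h_t(w).
\]
The path $s\in[0,t]\mapsto\operatorname{Ad}(1_{E_{n+1}}\otimes v_s^*)\circ\alpha_w$ connects $\alpha_w\in\operatorname{End}_0(E_{n+1}\otimes D^{\otimes 2})$ to $\beta_w^t$, so $\beta_w^t\in\operatorname{End}_0(E_{n+1}\otimes D^{\otimes 2})$ and $h_t(w)\in\operatorname{Im}\Pi$.

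For the second claim, I would take the retraction $r:=h_0={\rm id}_{E_{n+1}}\otimes(l\otimes 1_D)$, which is a norm-continuous unital $*$-homomorphism landing in $\operatorname{Im}\Pi$ since $r(w)=\Pi(\alpha_w)$. The family $H(t,w):=h_t(w)$, with $H(1,w):=w$, interpolates between $i\circ r$ (at $t=0$) and the identity (at $t=1$). Joint continuity on $[0,1)$ is immediate from the norm-continuity of $\{v_t\}$, while at $(1,w_0)$ the triangle inequality
\[
\|h_t(w)-w_0\|\le\|w-w_0\|+\|h_t(w_0)-w_0\|
\]
combined with the already noted pointwise convergence $\lim_{t\to 1}\|h_t(w_0)-w_0\|=0$ gives continuity. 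Hence $i\circ r\simeq{\rm id}$. For the opposite composite, the first claim ensures $h_t(\operatorname{Im}\Pi)\subset\operatorname{Im}\Pi$ for $t\in[0,1)$, so $H$ restricts to a homotopy $[0,1]\times\operatorname{Im}\Pi\to\operatorname{Im}\Pi$ yielding $r\circ i\simeq{\rm id}_{\operatorname{Im}\Pi}$.

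The substantive step is the first claim, where the preimage $\beta_w^t$ is produced by an inner twist by $1_{E_{n+1}}\otimes v_t^*$; the rest is essentially formal, the only mildly delicate point being joint continuity of $H$ at $t=1$, which reduces to the pointwise convergence already established.
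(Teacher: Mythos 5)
Your proof is correct and follows essentially the same route as the paper: you exhibit the preimage $\operatorname{Ad}(1_{E_{n+1}}\otimes v_t^*)\circ\alpha_w$ via the same commutation computation, and obtain the homotopy equivalence from the deformation $\{h_t\}$, which is exactly what the paper's ``follows immediately'' refers to. You additionally spell out two points the paper leaves implicit --- that $\operatorname{Ad}(1_{E_{n+1}}\otimes v_t^*)\circ\alpha_w$ stays in $\operatorname{End}_0(E_{n+1}\otimes D^{\otimes 2})$ and the joint continuity of $(t,w)\mapsto h_t(w)$ at $t=1$ --- both of which are handled correctly.
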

\begin{proof}
Direct computation yields
\begin{align*}
h_t(w)&=\operatorname{Ad}(1_{E_{n+1}}\otimes v_t^*)\circ h_0(w)\\
&=(1_{E_{n+1}}\otimes v_t^*)\Pi(\alpha_{w})(1_{E_{n+1}}\otimes v_t)\\
&=\sum_{i=1}^{n+1}(1_{E_{n+1}}\otimes v_t^*)\alpha_{w}(T_i\otimes 1_{D^{\otimes 2}})(T_i^*\otimes 1_{D^{\otimes 2}})(1_{E_{n+1}}\otimes v_t)\\
&=\Pi(\operatorname{Ad}(1_{E_{n+1}}\otimes v_t^*)\circ\alpha_{w}).
\end{align*}
This shows $h_t(w)\in \operatorname{Im}\Pi$ for $t\in [0, 1)$,
and the last part of the lemma follows immediately.
\end{proof}
Let us choose ${\rm id}_{E_{n+1}}\otimes 1_D\otimes l$ (resp. $(1_{E_{n+1}}-e)\otimes 1_{D^{\otimes 2}}$) as a base point of $\operatorname{End}_0(E_{n+1}\otimes D^{\otimes 2})$ (resp. $U(E_{n+1}\otimes D^{\otimes 2})((1_{E_{n+1}}-e)\otimes 1_{D^{\otimes 2}})$).
Since $h_t$ fixes the base point,
one has $[\gamma]=[h_0\circ\gamma]\in \pi_k(U(E_{n+1}\otimes D^{\otimes 2})((1_{E_{n+1}}-e)\otimes 1_{D^{\otimes 2}}))$, and the following lemma holds.
\begin{lem}\label{q}
The following  map is surjective $\colon$
$$\Pi_* : \pi_k(\operatorname{End}_0(E_{n+1}\otimes D^{\otimes 2}))\to \pi_k(U(E_{n+1}\otimes D^{\otimes 2})((1_{E_{n+1}}-e)\otimes 1_{D^{\otimes 2}})).$$
\end{lem}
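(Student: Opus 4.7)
The plan is to read off surjectivity of $\Pi_*$ directly from the section $w\mapsto\alpha_w$ constructed just before Lemma~\ref{he}. Given a based continuous map $\gamma\colon(S^k,\ast)\to(U(E_{n+1}\otimes D^{\otimes 2})((1_{E_{n+1}}-e)\otimes 1_{D^{\otimes 2}}),w_0)$ with $w_0:=(1_{E_{n+1}}-e)\otimes 1_{D^{\otimes 2}}$, I set $\tilde\gamma(x):=\alpha_{\gamma(x)}\in\operatorname{End}_0(E_{n+1}\otimes D^{\otimes 2})$. By construction $\Pi(\tilde\gamma(x))=h_0(\gamma(x))$, so $\Pi\circ\tilde\gamma=h_0\circ\gamma$, and the argument reduces to (i) checking that $\tilde\gamma$ really is a based continuous map, and (ii) exhibiting a based homotopy from $h_0\circ\gamma$ to $\gamma$.

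For (i), continuity of $w\mapsto\alpha_w$ in the point-norm topology is verified on generators of the universal C*-algebra $E_{n+1}\otimes D^{\otimes 2}$: one has $\alpha_w(T_i\otimes 1_{D^{\otimes 2}})=h_0(w)(T_i\otimes 1_{D^{\otimes 2}})$, which depends continuously on $w$, and $\alpha_w(1_{E_{n+1}}\otimes d)=1_{E_{n+1}}\otimes 1_D\otimes l(d)$, which is independent of $w$; a standard density plus uniform-boundedness argument then promotes continuity to all of $E_{n+1}\otimes D^{\otimes 2}$. Basedness is the computation $\alpha_{w_0}={\rm id}_{E_{n+1}}\otimes 1_D\otimes l$, which uses $(1_{E_{n+1}}-e)T_i=T_i$ and $l(1_{D^{\otimes 2}})=1_D$. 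For (ii) I appeal to the homotopy $h_t$ itself: $h_1=\mathrm{id}$, and the direct computation $h_t(w_0)=(1_{E_{n+1}}-e)\otimes v_t^*(l(1_{D^{\otimes 2}})\otimes 1_D)v_t=w_0$ shows that $h_t$ fixes the base point, so $(t,x)\mapsto h_t(\gamma(x))$ is a based homotopy from $\Pi\circ\tilde\gamma=h_0\circ\gamma$ to $\gamma$, giving $\Pi_*[\tilde\gamma]=[\gamma]$.

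The only subtle point is joint continuity of the map $(t,w)\mapsto h_t(w)$ across $t=1$, which is needed to make the homotopy in (ii) honestly continuous on $[0,1]\times S^k$. Since each $h_t$ is a $*$-homomorphism of $E_{n+1}\otimes D^{\otimes 2}$ and hence $1$-Lipschitz, the pointwise convergence $\|h_t(w)-w\|\to 0$ combined with the estimate $\|h_t(w)-h_{t_0}(w_0)\|\le\|w-w_0\|+\|h_t(w_0)-h_{t_0}(w_0)\|$ yields joint continuity at every $(1,w_0)$; away from $t=1$ continuity is manifest. With that in hand, everything else is bookkeeping built on top of the constructions already in place before Lemma~\ref{he}.
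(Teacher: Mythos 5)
Your proof is correct and follows essentially the same route as the paper: the paper also lifts a based sphere $\gamma$ via the assignment $x\mapsto\alpha_{\gamma(x)}$, using $\Pi(\alpha_w)=h_0(w)$ together with the fact that the homotopy $h_t$ fixes the base point to get $[\gamma]=[h_0\circ\gamma]\in\operatorname{Im}\Pi_*$. You merely make explicit the continuity and base-point checks (point-norm continuity of $w\mapsto\alpha_w$, $\alpha_{w_0}={\rm id}_{E_{n+1}}\otimes 1_D\otimes l$, and joint continuity of $(t,w)\mapsto h_t(w)$ at $t=1$ via contractivity of the $*$-homomorphisms $h_t$) that the paper leaves implicit.
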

For $\Gamma\in \operatorname{Map}_0(S^k, \operatorname{End}_0(E_{n+1}\otimes D^{\otimes 2}))$ with $\Pi_*([\Gamma])=0$,
we define another map $\Gamma'$ by $\Gamma'_x :=\alpha_{\Pi(\Gamma_x)}$.
\begin{lem}
We have $\Gamma\sim_h\Gamma'$ in $\operatorname{Map}(S^k, \operatorname{End}_0(E_{n+1}\otimes D^{\otimes 2}))$.
\end{lem}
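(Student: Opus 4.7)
The plan is to construct the free homotopy $\Gamma \sim_h \Gamma'$ in two stages: first reduce $\Gamma$ to $H_0 \circ \Gamma$ via the already-defined path $H_t$, then interpolate $H_0 \circ \Gamma$ to $\Gamma'$ using the strongly self-absorbing structure of $D$. For the first stage, the assignment $(s,x) \mapsto H_s(\Gamma_x)$ is continuous on $[0,1] \times S^k$ (continuity for $s \in [0,1)$ comes from continuity of the unitary path $v_s$, and continuity at $s=1$ from the pointwise-norm convergence $H_s(\beta) \to \beta$ already observed, combined with uniform continuity of the family $\{\Gamma_x\}$ on the compact base $S^k$), giving a homotopy from $H_0 \circ \Gamma$ at $s=0$ to $\Gamma = H_1 \circ \Gamma$ at $s=1$.

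For the second stage, observe that $H_0(\Gamma_x)$ and $\Gamma'_x = \alpha_{\Pi(\Gamma_x)}$ coincide on $E_{n+1} \otimes 1_{D^{\otimes 2}}$, both sending $T_i \otimes 1$ to $h_0(\Pi(\Gamma_x))(T_i \otimes 1)$ (using $\Pi(\alpha_w) = h_0(w)$ and the fact that every endomorphism agrees with its $\Pi$-image on each $T_i \otimes 1$). They differ only in the embedding of $1 \otimes D^{\otimes 2}$:
\[
H_0(\Gamma_x)(1 \otimes d) = ({\rm id}_{E_{n+1}} \otimes l \otimes 1_D)(\Gamma_x(1 \otimes d)), \qquad \Gamma'_x(1 \otimes d) = 1_{E_{n+1}} \otimes 1_D \otimes l(d),
\]
both unital $*$-homomorphisms $D^{\otimes 2} \to E_{n+1} \otimes D^{\otimes 2}$ landing in the relative commutant $C_x := h_0(\Pi(\Gamma_x))(E_{n+1} \otimes 1)' \cap (E_{n+1} \otimes D^{\otimes 2})$.

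To interpolate continuously in $x$, work in the separable unital C*-algebra $A := C(S^k) \otimes E_{n+1} \otimes D^{\otimes 2}$ and the subalgebra
\[
\tilde{C} := \{f \in A \mid f(x) \in C_x \text{ for all } x \in S^k\}.
\]
Since $h_0(\Pi(\Gamma_x))$ lies in $E_{n+1} \otimes D \otimes \mathbb{C}1_D$ for every $x$, the constant copy $1 \otimes 1 \otimes 1_D \otimes D \subset A$ lies in $\tilde{C}$; composing this inclusion with the approximately central unital $*$-endomorphisms of $D$ from Theorem \ref{ssa}~1) yields an approximately central sequence of embeddings, equivalently a unital $*$-homomorphism $D \to \tilde{C}_\infty$, and Theorem \ref{abs} then gives $\tilde{C} \cong \tilde{C} \otimes D$. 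Define $\alpha, \beta \in \operatorname{Hom}(D^{\otimes 2}, \tilde{C})_u$ by $\alpha(d)(x) := H_0(\Gamma_x)(1 \otimes d)$ and $\beta(d) := 1 \otimes 1 \otimes l(d)$; via $l : D^{\otimes 2} \cong D$ and the $D$-absorption of $\tilde{C}$, Theorem \ref{ssa}~3) produces a continuous path $\{u_t\}_{t \in [0,1)} \subset U(\tilde{C})$ with $u_0 = 1$ and $\lim_{t \to 1} \|u_t \alpha(d) u_t^* - \beta(d)\| = 0$ for every $d$. Setting
\[
\sigma_t(x)(T_i \otimes 1) := H_0(\Gamma_x)(T_i \otimes 1), \qquad \sigma_t(x)(1 \otimes d) := u_t(x)\, H_0(\Gamma_x)(1 \otimes d)\, u_t(x)^*
\]
for $t \in [0,1)$ and $\sigma_1 := \Gamma'$ extends to a well-defined unital $*$-endomorphism in $\operatorname{End}_0$ (since $u_t(x) \in C_x$ preserves the common action on $E_{n+1} \otimes 1$), and the uniform-in-$x$ convergence above makes $\sigma_\bullet$ continuous on $[0,1] \times S^k$, yielding $H_0 \circ \Gamma \sim_h \Gamma'$.

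The main obstacle is verifying that $\tilde{C}$ is $D$-absorbing so that Theorem \ref{ssa}~3) applies inside it, providing unitaries that lie in the relative commutant rather than merely in $A$; this reduces to producing approximately central unital embeddings $D \to \tilde{C}$, which we obtain from the constant $D$-copy in the last tensor factor together with the approximately central $*$-endomorphisms of $D$ from Theorem \ref{ssa}~1), followed by an application of Theorem \ref{abs}.
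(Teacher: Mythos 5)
Your proposal is correct and follows essentially the same route as the paper: reduce $\Gamma$ to $H_0\circ\Gamma$ via $H_t$, work in the relative commutant of $\operatorname{Im}\varrho_{h_0(\Pi(\Gamma))}$ inside $C(S^k)\otimes E_{n+1}\otimes D^{\otimes 2}$ (your $\tilde{C}$ is the paper's $P_\Gamma$), obtain $D$-absorption from the constant copy of $D$ in the last tensor factor via Theorems \ref{ssa} and \ref{abs}, and use Theorem \ref{ssa}~3) to connect the two unital embeddings of $D^{\otimes 2}$ into that commutant, assembling the homotopy by keeping $\varrho_{h_0(\Pi(\Gamma))}$ fixed on the $E_{n+1}$ part. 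Your write-up merely makes explicit the continuity and well-definedness checks that the paper leaves implicit.
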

\begin{proof}
Note that $\Gamma$ is homotopic to $H_0\circ\Gamma$.
Regarding $\varrho_{h_0(\Pi(\Gamma_x))}$, $x\in S^k$ as a $C(S^k)$-linear map in $\operatorname{Hom}(E_{n+1}, C(S^k)\otimes E_{n+1}\otimes D^{\otimes 2})_u$ and identifying it by $\varrho_{h_0(\Pi(\Gamma))}$,
we define a unital separable C*-algebra $P_\Gamma : =(\operatorname{Im}\varrho_{h_0(\Pi(\Gamma))})'\cap C(S^k)\otimes E_{n+1}\otimes D^{\otimes 2}$.
Since $1_{C(S^k)\otimes E_{n+1}\otimes D}\otimes D\subset P_\Gamma$,
Theorem \ref{ssa}, \ref{abs} show an isomorphism $P_\Gamma\cong P_\Gamma\otimes D$.
By Theorem \ref{ssa},
there is a path $\{\theta_t\}_{t\in [0, 1]}\subset \operatorname{Hom}(D^{\otimes 2}, P_\Gamma)_u$ with $\theta_1(d)=1_{C(S^k)\otimes E_{n+1}\otimes D}\otimes l(d)$,
$$\theta_0(d) : S^k\ni x\mapsto H_0\circ \Gamma_x(1_{E_{n+1}}\otimes d),\quad d\in D^{\otimes 2}.$$
The following map gives a homotopy to prove $\Gamma'\sim_h H_0\circ\Gamma$ :
$$\Gamma_t : E_{n+1}\otimes D^{\otimes 2}\ni f\otimes d\mapsto \varrho_{h_0(\Pi(\Gamma))}(f)\theta_t(d)\in C(S^k)\otimes E_{n+1}\otimes D^{\otimes 2},\quad t\in [0, 1].$$
\end{proof}
\begin{proof}[{Proof of Theorem \ref{mt}}]
By Lemma \ref{q},
it is enough to show the injectivity of the map $\Pi_*$.
Since $\operatorname{End}_0(E_{n+1}\otimes D^{\otimes 2})$ is a path connected $H$-space,
the map $\pi_k(\operatorname{End}_0(E_{n+1}\otimes D^{\otimes 2}))\to [S^k, \operatorname{End}_0(E_{n+1}\otimes D^{\otimes 2})]$ is bijective as mentioned before,
and we show that $\Gamma'$ is homotopic to a constant map of ${\rm id}_{E_{n+1}}\otimes 1_D\otimes l$.
By the assumption of $\Gamma$,
there is a path $\{w_t\}_{t\in [0, 1]}\subset \operatorname{Map}(S^k, (U(E_{n+1}\otimes D^{\otimes 2})((1_{E_{n+1}}-e)\otimes 1_{D^{\otimes 2}}))$ with $w_0=\Pi\circ \Gamma$, $w_1\equiv (1_{E_{n+1}}-e)\otimes 1_{D^{\otimes 2}}$.
A homotopy from $\Gamma'$ to the constant map is given by $\alpha_{w_t}, t\in [0, 1]$.
\end{proof}
Recall the homeomorphism $u : \operatorname{End}(\mathcal{O}_{n+1})\ni\sigma\mapsto \sum_{i=1}^{n+1}\sigma(S_i)S_i^*\in U(\mathcal{O}_{n+1})$.
\begin{lem}\label{we}
The following map is a weak homotopy equivalence $\colon$
$$u' : \operatorname{End}(\mathcal{O}_{n+1}\otimes D)\ni\sigma\mapsto \sum_{i=1}^{n+1}\sigma(S_i\otimes 1_D)(S_i^*\otimes 1_D)\in U(\mathcal{O}_{n+1}\otimes D).$$
\end{lem}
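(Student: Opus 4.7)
The plan is to mirror the proof of Theorem \ref{mt} almost verbatim, replacing $E_{n+1}$ by $\mathcal{O}_{n+1}$ and the projection $(1_{E_{n+1}}-e)\otimes 1_D$ by $1$, since $\sum_{i=1}^{n+1}S_iS_i^*=1$ in $\mathcal{O}_{n+1}$. Because $\mathcal{O}_{n+1}\otimes D\cong\mathcal{O}_{m+1}$, we have $K_1(\mathcal{O}_{n+1}\otimes D)=0$, so Theorem~\ref{ssa}(2) makes $U(\mathcal{O}_{n+1}\otimes D)$ path connected and the base-point choice is harmless.

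First I would apply $l:=\phi^{-1}\colon D^{\otimes 2}\to D$ to reduce to showing that
$$\Pi'\colon \operatorname{End}(\mathcal{O}_{n+1}\otimes D^{\otimes 2})\to U(\mathcal{O}_{n+1}\otimes D^{\otimes 2}),\quad \sigma\mapsto \sum_{i=1}^{n+1}\sigma(S_i\otimes 1)(S_i^*\otimes 1),$$
is a weak homotopy equivalence. By Theorem~\ref{ssa}(3) pick a path $\{v_t\}_{t\in[0,1)}\subset U(D^{\otimes 2})$ with $v_0=1$ and $\lim_{t\to 1}\|\operatorname{Ad}v_t(d)-l(d)\otimes 1_D\|=0$, and define homotopies $H_t$ on $\operatorname{End}(\mathcal{O}_{n+1}\otimes D^{\otimes 2})$ and $h_t$ on $U(\mathcal{O}_{n+1}\otimes D^{\otimes 2})$ by conjugation with $({\rm id}_{\mathcal{O}_{n+1}}\otimes\operatorname{Ad}v_t^*\circ(l\otimes 1_D))$ (and $H_1=\operatorname{id}$, $h_1=\operatorname{id}$). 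Then $\Pi'\circ H_t=h_t\circ\Pi'$, and, crucially, $h_0(w)\in\mathcal{O}_{n+1}\otimes D\otimes\mathbb{C}1_D$ for every $w$. Universality of $\mathcal{O}_{n+1}$ therefore yields a unital $*$-homomorphism $\varrho_{h_0(w)}\colon\mathcal{O}_{n+1}\to \mathcal{O}_{n+1}\otimes D$ by $S_i\mapsto h_0(w)(S_i\otimes 1)$, and
$$\alpha_w(f\otimes d):=\varrho_{h_0(w)}(f)(1_{\mathcal{O}_{n+1}\otimes D}\otimes l(d))$$
defines an endomorphism of $\mathcal{O}_{n+1}\otimes D^{\otimes 2}$ with $\Pi'(\alpha_w)=h_0(w)$.

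Surjectivity of $\Pi'_*$ then follows: for $\gamma\colon S^k\to U(\mathcal{O}_{n+1}\otimes D^{\otimes 2})$, the homotopy $\gamma\sim_h h_0\circ\gamma=\Pi'(\alpha_{\gamma_\bullet})$ exhibits $[\gamma]$ in the image. For injectivity, given $\Gamma\colon S^k\to\operatorname{End}(\mathcal{O}_{n+1}\otimes D^{\otimes 2})$ with $\Pi'_*([\Gamma])=0$, set $\Gamma'_x:=\alpha_{\Pi'(\Gamma_x)}$. Regarding $\varrho_{h_0(\Pi'(\Gamma))}$ as a $C(S^k)$-linear map, form the unital separable $C^*$-subalgebra
$$P_\Gamma:=(\operatorname{Im}\varrho_{h_0(\Pi'(\Gamma))})'\cap C(S^k)\otimes\mathcal{O}_{n+1}\otimes D^{\otimes 2}.$$
Since $P_\Gamma$ contains the last $D$-factor, Theorem~\ref{ssa}(1) together with Theorem~\ref{abs} shows $P_\Gamma\cong P_\Gamma\otimes D$. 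Theorem~\ref{ssa}(3) then supplies a path $\theta_t\in\operatorname{Hom}(D^{\otimes 2},P_\Gamma)_u$ interpolating $\theta_0(d)=H_0\circ\Gamma(1_{\mathcal{O}_{n+1}}\otimes d)$ and $\theta_1(d)=1_{\mathcal{O}_{n+1}\otimes D}\otimes l(d)$, so $\Gamma_t(f\otimes d):=\varrho_{h_0(\Pi'(\Gamma))}(f)\theta_t(d)$ provides a homotopy from $\Gamma'$ to $H_0\circ\Gamma\sim_h\Gamma$. Finally, a null-homotopy $\{w_t\}$ of $\Pi'\circ\Gamma$ gives the null-homotopy $\alpha_{w_t}$ of $\Gamma'$.

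The main difficulty I anticipate is the $D$-stability of $P_\Gamma$: while $P_\Gamma$ obviously contains a copy of $D$, Theorem~\ref{abs} requires a unital embedding into the central sequence algebra $(P_\Gamma)_\infty$, obtained by locating the asymptotically central sequences of Theorem~\ref{ssa}(1) in the last $D$-factor where they automatically commute with the image of $\varrho_{h_0(\Pi'(\Gamma))}$. Once this is set up, everything else reduces to diagram-chasing identical to the Cuntz--Toeplitz argument of Theorem~\ref{mt}.
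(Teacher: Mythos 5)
Your argument is essentially correct, but it is not the route the paper takes: the paper's proof of this lemma is a short K-theoretic counting argument, whereas you redo the asymptotic-intertwining machinery of Theorem \ref{mt} in the Cuntz-algebra setting. Concretely, the paper uses the standing assumption $\mathcal{O}_{n+1}\otimes D\cong\mathcal{O}_{m+1}$ together with Theorem \ref{L} and Theorem \ref{ssa}(2): the diagram comparing $u'_*$ with $u_*$ and $K_1(\otimes 1_D)$ shows $u'_*$ is surjective on $\pi_k$ (since $K_1(C(S^k)\otimes\mathcal{O}_{n+1})\to K_1(C(S^k)\otimes\mathcal{O}_{n+1}\otimes D)$ is onto and $u_*$ is bijective), and then injectivity is free because $|\pi_k(\operatorname{End}(\mathcal{O}_{n+1}\otimes D))|=|\pi_k(U(\mathcal{O}_{n+1}\otimes D))|$ ($=m$ or $0$), a surjection between finite sets of equal cardinality being a bijection. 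Your approach avoids invoking Dadarlat's computation for $\mathcal{O}_{m+1}$ and the finiteness trick, at the price of repeating the $\alpha_w$/$P_\Gamma$ construction; it does transfer correctly (indeed more smoothly, since $u'(\sigma)$ is an honest unitary by the Cuntz relation, so no analogue of the target set $U(E_{n+1}\otimes D)((1-e)\otimes 1_D)$ or of \cite[Lemma 2.11]{ST} is needed, and $\Pi'(\alpha_w)=h_0(w)$ is immediate). Two small points you should make explicit: (i) the H-space/base-point reduction also needs path-connectedness of $\operatorname{End}(\mathcal{O}_{n+1}\otimes D^{\otimes 2})$, which your own construction supplies (every $\sigma$ is homotopic to $\alpha_{\Pi'(\sigma)}$, and $U(\mathcal{O}_{n+1}\otimes D^{\otimes 2})$ is connected since $K_1=0$), but which you never state; (ii) for Theorem \ref{abs} you need the sequences $1\otimes\phi_k(d)$ to be \emph{asymptotically central in all of} $P_\Gamma$, not merely to commute with $\operatorname{Im}\varrho_{h_0(\Pi'(\Gamma))}$ (the latter only guarantees membership in $P_\Gamma$); this follows by approximating elements of $P_\Gamma\subset C(S^k)\otimes\mathcal{O}_{n+1}\otimes D\otimes D$ by sums of elementary tensors and using the approximate centrality of $\phi_k$ in the last factor, which is also what the paper implicitly does in the proof of Theorem \ref{mt}.
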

\begin{proof}
The unital $*$-homomorphism $\otimes 1_D : \mathcal{O}_{n+1}\ni a\mapsto a\otimes 1_D\in \mathcal{O}_{n+1}\otimes D\cong \mathcal{O}_{m+1}$ induces a surjection $K_{k-1}(\mathcal{O}_{n+1})\to K_{k-1}(\mathcal{O}_{n+1}\otimes D)$,
and the map $u'_*$ is surjective by the following diagram
$$\xymatrix{
\pi_k(\operatorname{End}(\mathcal{O}_{n+1}\otimes D))\ar[r]^{u'_*}&\pi_k(U(\mathcal{O}_{n+1}\otimes D))\ar@{=}[r]&K_1(C(S^k)\otimes\mathcal{O}_{n+1}\otimes D)\\
\pi_k(\operatorname{End}(\mathcal{O}_{n+1}))\ar[u]^{{\otimes {\rm id}_D}_*}\ar@{=}[r]^{u_*}&\pi_k(U(\mathcal{O}_{n+1}))\ar[u]^{{\otimes 1_D}_*}\ar@{=}[r]&K_1(C(S^k)\otimes\mathcal{O}_{n+1}).\ar@{->>}[u]^{K_1({\otimes 1_D})}
}$$
The equation $|\pi_k(\operatorname{End}(\mathcal{O}_{n+1}\otimes D))|=|\pi_k(U(\mathcal{O}_{n+1}\otimes D))|(=m, 0)$ shows the injectivity.
\end{proof}
\begin{thm}\label{mt1}
The map $q : \operatorname{End}_0(E_{n+1}\otimes D)\to\operatorname{End}(\mathcal{O}_{n+1}\otimes D)$ is a weak homotopy equivalence.
\end{thm}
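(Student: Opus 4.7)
The plan is to reduce the theorem to a statement about unitary groups and then compare the long exact sequences of two fibrations sharing the total space $U(E_{n+1}\otimes D)$.

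First, Theorem~\ref{mt} and Lemma~\ref{we} furnish weak homotopy equivalences
$$\operatorname{End}_0(E_{n+1}\otimes D)\xrightarrow{\simeq} U(E_{n+1}\otimes D)((1_{E_{n+1}}-e)\otimes 1_D),\qquad \operatorname{End}(\mathcal{O}_{n+1}\otimes D)\xrightarrow[u']{\simeq} U(\mathcal{O}_{n+1}\otimes D).$$
Writing $\pi$ also for $\pi\otimes{\rm id}_D$, a direct calculation using $\tilde\beta\circ\pi=\pi\circ\beta$ shows that $q$ corresponds, under these equivalences, to the map
$$\bar\pi\colon U(E_{n+1}\otimes D)((1-e)\otimes 1_D)\to U(\mathcal{O}_{n+1}\otimes D),\qquad u((1-e)\otimes 1_D)\mapsto\pi(u),$$
which is well-defined because two unitary lifts of a given element of the fiber bundle differ by an element of $U(D)_e:=\{(1-e)\otimes 1_D+e\otimes d\mid d\in U(D)\}$, on which $\pi$ is trivial. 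Hence it suffices to show that $\bar\pi$ is a weak homotopy equivalence.

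Next, I would exploit the commutative triangle $\pi=\bar\pi\circ p$, where $p$ is the fiber bundle of Lemma~\ref{se1} (with fiber $U(D)_e$) and $\pi$ is the Serre fibration of Lemma~\ref{se}. The homotopy groups of the common total space are computed, via Theorem~\ref{ssa}(2) and the fact that the unital inclusion $\mathbb{C}\hookrightarrow E_{n+1}$ is a $KK$-equivalence (since $K_0(E_{n+1})=\mathbb{Z}$ with $[1_{E_{n+1}}]_0=1$), as $\pi_k(U(E_{n+1}\otimes D))\cong K_{1-k}(D)$, which by the hypothesis $K_1(D)=0$ equals $K_0(D)$ for odd $k$ and $0$ for even $k$.

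The heart of the argument is computing the fiber inclusion on homotopy groups. The map $\iota_e\colon U(D)\to U(E_{n+1}\otimes D)$, $d\mapsto(1-e)\otimes 1_D+e\otimes d$, factors as the factor inclusion $U(D)\hookrightarrow U(\mathbb{C})\times U(D)=U(\mathbb{C}\oplus D)$ followed by $U(\phi)$, where
$$\phi\colon\mathbb{C}\oplus D\to E_{n+1}\otimes D,\qquad (\lambda,d)\mapsto (1-e)\otimes\lambda+e\otimes d,$$
is a unital $*$-homomorphism. Using $[e]_0=-n$ in $K_0(E_{n+1})=\mathbb{Z}$, the map $K_0(\phi)\colon\mathbb{Z}\oplus K_0(D)\to K_0(D)$ sends $(m,x)$ to $(n+1)m[1_D]-nx$, and composing with the factor inclusion $x\mapsto(0,x)$ yields $(\iota_e)_*=-n\cdot\operatorname{id}$ on $K_0(D)$ for odd $k$ (and trivially $0$ on $K_1(D)=0$ for even $k$). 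Since $\operatorname{Tor}(K_0(D))=0$, multiplication by $n$ is injective on $K_0(D)$, so the LES of $p$ identifies $p_*$ on $\pi_k$ with the quotient $K_0(D)\twoheadrightarrow K_0(D)/nK_0(D)$ for odd $k$ and with $0\to 0$ for even $k$.

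Finally, the six-term exact sequence in the preliminaries gives $K_0(\mathcal{O}_{n+1}\otimes D)=K_0(D)/nK_0(D)$ and $K_1(\mathcal{O}_{n+1}\otimes D)=0$, so $\pi_*$ also identifies with the same quotient map on each $\pi_k$. Since $\bar\pi_*\circ p_*=\pi_*$ with $p_*$ surjective and $\ker p_*=nK_0(D)=\ker\pi_*$, the induced map $\bar\pi_*$ is forced to be an isomorphism on every $\pi_k$, proving the theorem. I expect the main technical obstacle to be the computation of $(\iota_e)_*$: the fiber inclusion is a continuous group homomorphism but not induced by any single $*$-homomorphism from $D$, so one must carefully pass through the auxiliary algebra $\mathbb{C}\oplus D$ and track the K-theoretic contribution of $[e]_0=-n$.
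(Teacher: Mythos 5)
Your proposal is correct, and its skeleton is the paper's: after reducing via Theorem \ref{mt} and Lemma \ref{we} to the map $\bar\pi$ on the base spaces, both arguments play the two fibrations $p$ (Lemma \ref{se1}) and $\pi$ (Lemma \ref{se}) over the common total space $U(E_{n+1}\otimes D)$ against each other. The difference is the endgame. The paper computes no homotopy groups here: it observes that the inclusion of the fiber of $p$ (unitaries of the form $(1-e)\otimes 1_D+e\otimes d$) into the fiber of $\pi$ (unitaries congruent to $1$ modulo $\mathbb{K}\otimes D$) is a weak homotopy equivalence by nonstable $K$-theory (Theorem \ref{ssa}, part 2), and then applies the five lemma to the two long exact sequences, the map on total spaces being the identity. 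You instead compute everything explicitly: $\pi_k(U(E_{n+1}\otimes D))\cong K_{1-k}(D)$, the fiber inclusion of $p$ acting as multiplication by $[e]_0=-n$, the identification of $p_*$ with the quotient $K_0(D)\twoheadrightarrow K_0(D)/nK_0(D)$ (torsion-freeness of $K_0(D)$ entering for the even homotopy groups), the six-term sequence for $\pi_*$, and the kernel/cokernel argument with $\pi_*=\bar\pi_*\circ p_*$. Your route buys an explicit description of all homotopy groups and of the maps between them, at the cost of this computation; the paper's five-lemma route needs neither the value $-n$ nor, at this step, $\operatorname{Tor}(K_0(D))=0$. One small imprecision to fix: $\mathbb{C}\oplus D$ is not $D$-absorbing, so $\pi_k(U(\mathbb{C}\oplus D))$ is not $K_{1-k}(\mathbb{C}\oplus D)$ for $k\geq 2$; this is harmless, since all you need is the $K_1$-class of the unitary $(1-e)\otimes 1_D+e\otimes u=\phi^{\sim}(1\oplus u)$ in $C(S^k)\otimes E_{n+1}\otimes D$, which is $K_1(\phi)\bigl([1\oplus u]_1\bigr)=-n[u]_1$ by naturality of the map sending a unitary to its $K_1$-class — so state the step as that $K$-theory computation rather than as an identification of $\pi_k(U(\mathbb{C}\oplus D))$ with a $K$-group.
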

\begin{proof}
The following diagram commutes
$$
\scriptsize
\xymatrix{
p^{-1}((1_{E_{n+1}}-e)\otimes 1_D)\ar[r]\ar[d]^{i}&U(E_{n+1}\otimes D)\ar[r]^{p\quad\quad\quad\quad}\ar@{=}[d]&U(E_{n+1}\otimes D)((1_{E_{n+1}}-e)\otimes 1_D)\ar[d]^{\pi}&\ar[l]^{\quad\quad\quad\quad Thm \ref{mt}}\ar[d]^{q}\operatorname{End}_0(E_{n+1}\otimes D)\\
\pi^{-1}(1_{\mathcal{O}_{n+1}\otimes D})\ar[r]&U(E_{n+1}\otimes D)\ar[r]^{\pi}&U(\mathcal{O}_{n+1}\otimes D)&\ar[l]^{u'_*}\operatorname{End}(\mathcal{O}_{n+1}\otimes D)
}$$
where two horizontal sequences contain the Serre fibrations in Lemma \ref{se}, \ref{se1}.
By 2) of Theorem \ref{ssa}, the inclusion $i$ is a weak homotopy equivalence,
and the vertical map $\pi$ is a weak homotopy equivalence by 5-lemma.
Theorem \ref{mt} and Lemma \ref{we} prove the statement.
\end{proof}


\subsection{The weak homotopy type of $\operatorname{Aut}(E_{n+1}\otimes D)$}
\begin{lem}\label{pcc}
The image of the restriction map $\eta_n : \operatorname{Aut}(E_{n+1}\otimes D)\to \operatorname{Aut}(\mathbb{K}\otimes D)$ is contained in $\operatorname{Aut}_0(\mathbb{K}\otimes D)$.
\end{lem}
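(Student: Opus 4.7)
My plan is to show $\alpha|_{\mathbb{K}\otimes D}\in\operatorname{Aut}_0(\mathbb{K}\otimes D)$ by first using a naturality argument for $K_0$ and then invoking the Dadarlat--Pennig characterization of the path component (Theorem \ref{DPT}).

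The heart of the argument is verifying that $[\alpha(e\otimes 1_D)]_0=[e\otimes 1_D]_0$ in $K_0(\mathbb{K}\otimes D)$. The six-term exact sequence for $0\to\mathbb{K}\otimes D\to E_{n+1}\otimes D\to\mathcal{O}_{n+1}\otimes D\to 0$ displayed in the excerpt shows that the inclusion $\iota\colon\mathbb{K}\otimes D\hookrightarrow E_{n+1}\otimes D$ induces multiplication by $-n$ on $K_0(D)=K_0(\mathbb{K}\otimes D)=K_0(E_{n+1}\otimes D)$: indeed, $[e\otimes 1_D]=[1_{E_{n+1}\otimes D}]-\sum_{i=1}^{n+1}[T_iT_i^*\otimes 1_D]=-n[1_D]$ in $K_0(E_{n+1}\otimes D)$. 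Unitality of $\alpha$ gives $K_0(\alpha)([1_D])=[1_D]$, and naturality $K_0(\iota)\circ K_0(\alpha|_{\mathbb{K}\otimes D})=K_0(\alpha)\circ K_0(\iota)$ applied to $[e\otimes 1_D]$ yields $-n\cdot K_0(\alpha|_{\mathbb{K}\otimes D})([e\otimes 1_D])=-n[1_D]$ in $K_0(E_{n+1}\otimes D)$. The hypothesis $\operatorname{Tor}(K_0(D))=0$ makes multiplication by $n$ injective on $K_0(D)$, forcing $K_0(\alpha|_{\mathbb{K}\otimes D})([e\otimes 1_D])=[e\otimes 1_D]$ in $K_0(\mathbb{K}\otimes D)$; in particular $\alpha(e\otimes 1_D)$ is Murray--von Neumann equivalent to $e\otimes 1_D$.

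Contractibility of $U(\mathcal{M}(\mathbb{K}\otimes D))$ (Theorem \ref{Kui}) upgrades this equivalence to a norm-continuous path of projections, so $\alpha(e\otimes 1_D)\in\mathcal{P}_0(\mathbb{K}\otimes D)$. Via the homotopy equivalence $\eta$ of Theorem \ref{DPT}(2), one finds $\gamma\in\operatorname{Aut}_0(\mathbb{K}\otimes D)$ with $\gamma(e\otimes 1_D)=\alpha(e\otimes 1_D)$, and then the composite $\beta:=\gamma^{-1}\circ\alpha|_{\mathbb{K}\otimes D}$ is an automorphism of $\mathbb{K}\otimes D$ fixing $e\otimes 1_D$ and restricting to some $\beta_0\in\operatorname{Aut}(D)$ on the corner $(e\otimes 1_D)(\mathbb{K}\otimes D)(e\otimes 1_D)\cong D$.

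The main obstacle is then to show $\beta\in\operatorname{Aut}_0(\mathbb{K}\otimes D)$. I would first apply Theorem \ref{ssa}(3) to $\operatorname{id}_D,\beta_0\colon D\to D$ to obtain a path $w_t\in U(D)$ with $w_0=1$ and $\operatorname{Ad}(w_t)\to\beta_0$ in point-norm; the path $t\mapsto\operatorname{id}_\mathbb{K}\otimes\operatorname{Ad}(w_t)$, extended by $\operatorname{id}_\mathbb{K}\otimes\beta_0$ at $t=1$, is point-norm continuous in $\operatorname{Aut}(\mathbb{K}\otimes D)$, placing $\operatorname{id}_\mathbb{K}\otimes\beta_0$ in $\operatorname{Aut}_0$. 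The delicate step is to identify $(\operatorname{id}_\mathbb{K}\otimes\beta_0)^{-1}\circ\beta$---which fixes $e\otimes 1_D$ and the corner $D$ pointwise---as an inner automorphism $\operatorname{Ad}(V)$ by some $V\in U(\mathcal{M}(\mathbb{K}\otimes D))$, a Morita-type assertion reflecting that a pointwise-trivial action on the corner forces $D$-linearity on the standard Hilbert $D$-module; once this is established, the contractibility of $U(\mathcal{M}(\mathbb{K}\otimes D))$ (Theorem \ref{Kui}) provides a path to $\operatorname{id}$ in $\operatorname{Aut}_0$, and concatenation with the earlier paths completes the proof.
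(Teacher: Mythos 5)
Your K-theoretic core coincides with the paper's proof: the paper likewise uses that $K_0$ of the inclusion $\mathbb{K}\otimes D\hookrightarrow E_{n+1}\otimes D$ is multiplication by $-n$, hence injective because $\operatorname{Tor}(K_0(D))=0$, together with $K_0(\alpha)$ fixing the relevant class, to get $[\alpha(e\otimes 1_D)]_0=[e\otimes 1_D]_0$ (your observation that unitality alone suffices, since $[e\otimes 1_D]=-n[1]$ in $K_0(E_{n+1}\otimes D)$, is if anything a slight simplification). Where you genuinely diverge is afterwards: the paper stops there and invokes Theorem \ref{DPT}, i.e.\ the Dadarlat--Pennig result that $[\alpha]\mapsto[\alpha(e\otimes 1_D)]_0$ classifies the path components of $\operatorname{Aut}(\mathbb{K}\otimes D)$, so equality of the $K_0$-classes immediately places $\alpha|_{\mathbb{K}\otimes D}$ in $\operatorname{Aut}_0(\mathbb{K}\otimes D)$. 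You instead re-prove this injectivity statement by hand: reduce to an automorphism fixing $e\otimes 1_D$, peel off the corner automorphism $\beta_0$ via Theorem \ref{ssa}(3), and implement the remaining corner-trivial automorphism by a multiplier unitary, finishing with Theorem \ref{Kui}. This is sound, and your ``delicate step'' is indeed a standard Morita-type fact: such an automorphism restricts to a $D$-linear, inner-product preserving bijection of the right Hilbert $D$-module $(\mathbb{K}\otimes D)(e\otimes 1_D)\cong H_D$, i.e.\ a unitary $V\in\mathcal{L}(H_D)=\mathcal{M}(\mathbb{K}\otimes D)$, and $\operatorname{Ad}V$ agrees with it on the dense subspace $(\mathbb{K}\otimes D)(e\otimes 1_D)(\mathbb{K}\otimes D)$ --- but as written you assert rather than prove it, so these two lines should be added. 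Two smaller points: passing from equality of $K_0$-classes to Murray--von Neumann equivalence uses cancellation (or pure infiniteness) of $\mathbb{K}\otimes D$, true for strongly self-absorbing $D$ but worth a word; and a homotopy equivalence $\eta$ is not literally surjective, so to produce $\gamma$ argue directly that homotopic projections are unitarily equivalent by a unitary in $(\mathbb{K}\otimes D)^{\sim}$ connected to $1$. The trade-off: your route avoids relying on the injectivity portion of \cite[Theorem 2.22]{DP} and is self-contained, while the paper's is a one-line citation.
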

\begin{proof}
Note that $\operatorname{Tor}(K_0(D))=0$ by our assumption.
Since $K_0(\alpha)={\rm id}_{K_0(E_{n+1}\otimes D)}$ for $\alpha\in\operatorname{Aut}(E_{n+1}\otimes D)$
and the map $K_0(\mathbb{K}\otimes D)\xrightarrow{-n} K_0(E_{n+1}\otimes D)$ is injective,
one has $[\alpha(e\otimes 1_D)]_0=[e\otimes 1_D]_0\in K_0(\mathbb{K}\otimes D)$,
and Theorem \ref{DPT} proves the statement.
\end{proof}
\begin{lem}\label{triv}
For a finite CW complex $X$ with $\operatorname{Tor}(K^0(X), \mathbb{Z}_n)=0$, we have $\operatorname{Im}{\eta_n}_*=\{0\}\subset [X, \operatorname{Aut}_0(\mathbb{K}\otimes D)]$ for the map ${\eta_n}_* : [X, \operatorname{Aut}(E_{n+1}\otimes D)]\to [X, \operatorname{Aut}_0(\mathbb{K}\otimes D)]$.
\end{lem}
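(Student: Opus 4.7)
The plan is to turn the statement into an equality in $K_0(C(X)\otimes D)$ via Theorem~\ref{DPT}(2), reduce it to an algebraic identity through the 6-term exact sequence of the Cuntz--Toeplitz extension, and finally clear the resulting torsion with the K\"unneth theorem. Assuming $X$ connected (otherwise argue component-wise), take $\alpha\in\operatorname{Map}(X,\operatorname{Aut}(E_{n+1}\otimes D))$; Lemma~\ref{pcc} ensures $\eta_n\circ\alpha$ lands in $\operatorname{Aut}_0(\mathbb{K}\otimes D)$, and Theorem~\ref{DPT}(2) identifies $[\eta_n\circ\alpha]$ with the element $[\alpha(1_{C(X)}\otimes e\otimes 1_D)]_0\in 1+K_0(C_0(X,x)\otimes D)\subset K_0(C(X)\otimes\mathbb{K}\otimes D)\cong K_0(C(X)\otimes D)$. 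Triviality of $[\eta_n\circ\alpha]$ is therefore equivalent to showing
\[
\xi:=[\alpha(1_{C(X)}\otimes e\otimes 1_D)]_0-[1_{C(X)}\otimes e\otimes 1_D]_0=0 \quad \text{in } K_0(C(X)\otimes D).
\]

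For the second step I would feed the 6-term sequence coming from
$0\to C(X)\otimes\mathbb{K}\otimes D\to C(X)\otimes E_{n+1}\otimes D\to C(X)\otimes \mathcal{O}_{n+1}\otimes D\to 0$,
in which the inclusion induces multiplication by $-n$ on $K_0(C(X)\otimes D)$; this reflects the identity $[e]_0=[1]_0-(n+1)[1]_0=-n[1]_0$ in $K_0(E_{n+1}\otimes D)$, valid because each $T_iT_i^*$ is Murray--von Neumann equivalent to $T_i^*T_i=1$. Viewing $\alpha$ as a unital $C(X)$-linear automorphism of $C(X)\otimes E_{n+1}\otimes D$, the image of $\xi$ under this inclusion equals $K_0(\alpha)([1_{C(X)}\otimes e\otimes 1_D]_0)-[1_{C(X)}\otimes e\otimes 1_D]_0$ computed in $K_0(C(X)\otimes E_{n+1}\otimes D)$. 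Using $[1_{C(X)}\otimes e\otimes 1_D]_0=-n[1]_0$ together with $K_0(\alpha)([1]_0)=[1]_0$ (immediate from $\alpha(1)=1$), this image vanishes. Exactness then places $\xi$ in $\ker(-n)=\operatorname{Tor}(K_0(C(X)\otimes D),\mathbb{Z}_n)$, i.e.\ in the image of $\delta$.

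The final step uses the K\"unneth theorem, which applies because $D$ satisfies the UCT. Since $K_1(D)=0$ and $K_0(D)$ is torsion-free, hence flat over $\mathbb{Z}$, by our standing hypotheses, K\"unneth collapses to $K_0(C(X)\otimes D)\cong K^0(X)\otimes K_0(D)$. Tensoring the short exact sequence $0\to\operatorname{Tor}(K^0(X),\mathbb{Z}_n)\to K^0(X)\xrightarrow{n}K^0(X)$ with the flat module $K_0(D)$ produces $\operatorname{Tor}(K_0(C(X)\otimes D),\mathbb{Z}_n)\cong \operatorname{Tor}(K^0(X),\mathbb{Z}_n)\otimes K_0(D)$, which vanishes by hypothesis, forcing $\xi=0$. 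The main conceptual subtlety is hidden in the middle step: I do \emph{not} claim, and may not have, the full equality $K_0(\alpha)=\operatorname{id}$ for the parametrized $\alpha$; it is essential that the calculation only invokes the consequence of unitality applied to the single class $[1]_0$.
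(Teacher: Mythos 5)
Your argument is correct, and its skeleton coincides with the paper's: both reduce the statement via Theorem \ref{DPT} to the equality $[\alpha(1_{C(X)}\otimes e\otimes 1_D)]_0=[1_{C(X)}\otimes e\otimes 1_D]_0$ in $K_0(C(X)\otimes\mathbb{K}\otimes D)\cong K_0(C(X)\otimes D)$, locate the difference of these classes in $\operatorname{Tor}(K^0(X)\otimes K_0(D),\mathbb{Z}_n)$, and kill it using the hypothesis together with torsion-freeness (hence flatness) of $K_0(D)$. Where you differ is the middle step. The paper exhibits an explicit $2\times 2$ lift of $u'\circ\tilde{\alpha}\oplus(u'\circ\tilde{\alpha})^*$ in $\mathbb{M}_2(C(X)\otimes E_{n+1}\otimes D)$ and computes directly that the difference equals the index $\delta([u'\circ\tilde{\alpha}]_1)$, which lies in the image of $\delta$, i.e.\ in the torsion subgroup. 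You instead note that both projections represent $-n[1]_0$ in $K_0(C(X)\otimes E_{n+1}\otimes D)$ (using only $\alpha(1)=1$, as you are careful to point out), so the difference lies in the kernel of the inclusion-induced map, which by exactness of the six-term sequence equals $\operatorname{im}\delta=\operatorname{Tor}(K_0(C(X)\otimes D),\mathbb{Z}_n)$. Your version is softer and avoids the matrix computation, but it only shows the obstruction is \emph{some} $n$-torsion element, whereas the paper's computation identifies it as $\delta([u'\circ\tilde{\alpha}]_1)$; that sharper identification is precisely what is reused later (Corollary \ref{sy}) to express $\mathfrak{b}_{(n)}$ on suspensions as the index map, so it is not redundant in the paper even though it is unnecessary for the lemma itself. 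Two minor points: the K\"unneth formula applies because $C(X)$ lies in the bootstrap class, so no UCT hypothesis on $D$ is needed (the standing assumptions of Section 3 only give $\operatorname{Tor}(K_0(D))=K_1(D)=0$, which is all your flatness argument actually uses); and your spelled-out derivation of $\operatorname{Tor}(K^0(X)\otimes K_0(D),\mathbb{Z}_n)=0$ makes explicit a vanishing that the paper simply asserts.
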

\begin{proof}
We identify a continuous map $\alpha : X\to \operatorname{Aut}(E_{n+1}\otimes D)$ with a $C(X)$-linear $*$-isomorphism of $C(X)\otimes E_{n+1}\otimes D$.
For $$u'\circ\tilde{\alpha}=\sum_{i=1}^{n+1}\tilde{\alpha}(1_{C(X)}\otimes S_i\otimes 1_D)(1_{C(X)}\otimes S_i^*\otimes 1_D)\in U(C(X)\otimes \mathcal{O}_{n+1}\otimes D),$$
we have the following lift of the unitary $u'\circ\tilde{\alpha}\oplus (u'\circ\tilde{\alpha})^*$ :
$$\scriptsize\left(
\begin{array}{cc}
\sum_{i=1}^{n+1}\alpha(1\otimes T_i\otimes 1_D)(1\otimes T_i^*\otimes 1_D)& \alpha(1\otimes e\otimes 1_D)\\
1\otimes e\otimes 1_D& \sum_{i=1}^{n+1}(1\otimes T_i\otimes 1_D)\alpha(1\otimes T_i^*\otimes 1_D)
\end{array}
\right)\in\mathbb{M}_2(C(X)\otimes E_{n+1}\otimes D),
$$
and direct computation yields
$$[1_{C(X)}\otimes e\otimes 1_D]_0-[\alpha(1_{C(X)}\otimes e\otimes  1_D)]_0=\delta([u'\circ\tilde{\alpha}]_1)\in \operatorname{Tor}(K^0(X)\otimes K_0(D), \mathbb{Z}_{n})=0.$$
Therefore Theorem \ref{DPT} proves the statement.
\end{proof}

For an arbitrary $\beta\in \operatorname{Aut}(E_{n+1}\otimes D)$,
we consider the following $C(S^1)$-algebras
$$C_\beta :=\{ F\in C[0, 1]\otimes\mathbb{K}\otimes D\; | \; F(0)=\beta(F(1))\},$$
$$M_\beta :=\{ F\in C[0, 1]\otimes E_{n+1}\otimes D \;|\; F(0)=\beta(F(1))\},$$
$$A_{\tilde{\beta}}:=\{ a\in C[0, 1]\otimes \mathcal{O}_{n+1}\otimes D \;|\; a(0)=\tilde{\beta}(a(1))\},$$
$$C(S^1)\otimes \mathcal{O}_{n+1}\otimes D=\{ a\in C[0, 1]\otimes \mathcal{O}_{n+1}\otimes D\;|\; a(0)=a(1)\},$$
$$C(S^1)\otimes \mathbb{K}\otimes D =\{ F\in C[0,1]\otimes \mathbb{K}\otimes D\; |\; F(0)=F(1)\}.$$
By Lemma \ref{pcc}, there is a path $\Xi : [0, 1]\to \operatorname{Aut}_0(\mathbb{K}\otimes D)$ from $\Xi_0={\rm id}_{\mathbb{K}\otimes D}$ to $\Xi_1=\eta_n(\beta)$.
So one has an isomorphism
$$\theta_\beta : C_\beta\ni F(t)\mapsto \Xi_t(F(t))\in C(S^1)\otimes \mathbb{K}\otimes D.$$
Since $C_\beta\subset M_\beta$ is an essential ideal, the map $\theta_\beta$ induces an essential unital extension $\tau_\beta : A_{\tilde{\beta}}\to \mathcal{Q}(C(S^1)\otimes\mathbb{K}\otimes D)$.
There is a path $\xi : [0, 1]\to \operatorname{Aut}(\mathcal{O}_{n+1}\otimes D)$ with $\xi_0={\rm id}_{\mathcal{O}_{n+1}\otimes D}, \xi_1=\tilde{\beta}$,
and one has a $C(S^1)$-linear isomorphism $\phi_{\tilde{\beta}} : A_{\tilde{\beta}}\ni a(t)\mapsto \xi_t(a(t))\in C(S^1)\otimes\mathcal{O}_{n+1}\otimes D$.
We define a unital embedding by $$\sigma_\beta  : \mathcal{O}_{n+1}\otimes D\ni d \mapsto \tau_\beta(\phi^{-1}_{\tilde{\beta}}(1_{C(S^1)}\otimes d))\in \mathcal{Q}(C(S^1)\otimes\mathbb{K}\otimes D).$$
By definition, two Busby invariants ${\rm ev}_0\circ \sigma_\beta$ and ${\rm ev}_0\circ\sigma_{{\rm id}_{E_{n+1}\otimes D}}$ are equal where we denote by ${\rm ev}_0 : \mathcal{Q}(C(S^1)\otimes\mathbb{K}\otimes D)\to\mathcal{Q}(\mathbb{K}\otimes D)$ the induced map by the evaluation at $0$.

\begin{lem}\label{sue}
We have $\sigma_\beta\sim_{s.u.e}\sigma_{{\rm id}_{E_{n+1}\otimes D}}$.
\end{lem}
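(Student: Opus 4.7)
The plan is to settle Lemma~\ref{sue} in two steps: first identify $[\sigma_\beta]$ with $[\sigma_{\mathrm{id}_{E_{n+1}\otimes D}}]$ in the group $\operatorname{Ext}(\mathcal{O}_{n+1}\otimes D,\; C(S^1)\otimes D\otimes \mathbb{K})$, and then promote the resulting Ext equivalence to $\sim_{s.u.e}$ via the Elliott--Kucerovsky absorption theorem.

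For the Ext identification, I use the split short exact sequence
\[ 0 \longrightarrow SD\otimes \mathbb{K} \longrightarrow C(S^1)\otimes \mathbb{K}\otimes D \xrightarrow{\,\mathrm{ev}_0\,} \mathbb{K}\otimes D \longrightarrow 0, \]
split by the constant-function inclusion $\mathbb{K}\otimes D\hookrightarrow C(S^1)\otimes\mathbb{K}\otimes D$. Under our standing hypotheses ($\operatorname{Tor}(K_0(D))=K_1(D)=0$ and $\mathcal{O}_{n+1}\otimes D\cong\mathcal{O}_{m+1}$) we have $K_0(\mathcal{O}_{m+1})=\mathbb{Z}_m$, $K_1(\mathcal{O}_{m+1})=0$, $K_0(SD)=0$, and $K_1(SD)=K_0(D)$ torsion-free, so every $\operatorname{Hom}$- and $\operatorname{Ext}^1$-term appearing in Theorem~\ref{ext} for $\operatorname{Ext}(\mathcal{O}_{n+1}\otimes D, SD\otimes\mathbb{K})$ vanishes. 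Hence the induced sequence on $\operatorname{Ext}(\mathcal{O}_{n+1}\otimes D, \,\cdot\,)$ is split short exact with trivial kernel and $\mathrm{ev}_{0,*}$ is an isomorphism. Combined with the identity $\mathrm{ev}_0\circ\sigma_\beta=\mathrm{ev}_0\circ\sigma_{\mathrm{id}_{E_{n+1}\otimes D}}$ noted before the lemma, this forces $[\sigma_\beta]=[\sigma_{\mathrm{id}_{E_{n+1}\otimes D}}]$.

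For the upgrade to $\sim_{s.u.e}$, the preceding lemma tells us that every unital extension of $\mathcal{O}_{n+1}\otimes D\cong\mathcal{O}_{m+1}$ by $C(S^1)\otimes\mathbb{K}\otimes D$ is purely large, so $\sigma_\beta$ and $\sigma_{\mathrm{id}_{E_{n+1}\otimes D}}$ are purely large. Equality in Ext furnishes trivial extensions $\rho_1,\rho_2$ with $\sigma_\beta\oplus\rho_1\sim_{s.u.e}\sigma_{\mathrm{id}_{E_{n+1}\otimes D}}\oplus\rho_2$. I then enlarge each $\rho_i$ to a unital trivial extension $\tilde\rho_i$, by adjoining a unital $*$-homomorphism into a complementary corner of $\mathcal{M}(C(S^1)\otimes\mathbb{K}\otimes D)$ (possible because every non-zero projection in $\mathcal{Q}(C(S^1)\otimes\mathbb{K}\otimes D)$ is properly infinite), and apply Theorem~\ref{EKA} to the purely large extensions $\sigma_\beta$ and $\sigma_{\mathrm{id}_{E_{n+1}\otimes D}}$ to absorb the $\tilde\rho_i$'s strongly. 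This leaves $\sigma_\beta\sim_{s.u.e}\sigma_{\mathrm{id}_{E_{n+1}\otimes D}}$.

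The main obstacle is the bookkeeping in the second step---passing from possibly non-unital $\rho_i$ in the Ext equivalence to unital trivial extensions that can be simultaneously absorbed on both sides via Theorem~\ref{EKA}. The Ext computation via the split sequence and the invocation of the absorption theorem are otherwise routine.
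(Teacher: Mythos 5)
Your first step is fine and coincides with the paper's: the vanishing of the relevant $\operatorname{Hom}$ and $\operatorname{Ext}^1_{\mathbb{Z}}$ terms makes ${\rm ev}_{0*}$ an isomorphism on $\operatorname{Ext}$, and ${\rm ev}_0\circ\sigma_\beta={\rm ev}_0\circ\sigma_{{\rm id}_{E_{n+1}\otimes D}}$ then forces $[\sigma_\beta]=[\sigma_{{\rm id}_{E_{n+1}\otimes D}}]$. The gap is in the second step, and it is not bookkeeping: what equality in $\operatorname{Ext}$ plus pure largeness actually yields is only \emph{weak} unitary equivalence (this is exactly Corollary \ref{abscor}). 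If your absorption argument worked as stated, it would prove that any two unital purely large extensions with the same $\operatorname{Ext}$ class are strongly unitarily equivalent, and that is false. For example, take $B=\mathcal{O}_{n+1}$, $A=\mathbb{C}$, a unital extension $\tau$ and $\operatorname{Ad}v\circ\tau$ with $v\in U(\mathcal{Q}(\mathbb{K}))$ of Fredholm index $1$: both are unital and purely large with the same $\operatorname{Ext}$ class, but any corona unitary implementing an equivalence differs from $v$ by a unitary in $\tau(\mathcal{O}_{n+1})'\cap\mathcal{Q}(\mathbb{K})$, whose $K_1$ vanishes by Proposition \ref{k}, so a strong equivalence would force $\operatorname{ind}(v)=0$. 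The place where your construction breaks is precisely the step ``enlarge each $\rho_i$ to a unital trivial extension'': the complementary corners on the two sides are cut by multiplier projections that agree only modulo the ideal $C(S^1)\otimes\mathbb{K}\otimes D$, and matching them compatibly with the given strong equivalence (so that Theorem \ref{EKA} can be applied simultaneously on both sides) meets a $K_0(C(S^1)\otimes D)$-valued index obstruction --- the same obstruction, read in $K_1(\mathcal{Q}(C(S^1)\otimes\mathbb{K}\otimes D))\cong K_0(C(S^1)\otimes D)\cong K_0(D)\neq 0$, that distinguishes strong from weak unitary equivalence.

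The paper's proof handles exactly this point. It first gets a unitary $w\in U(\mathcal{Q}(C(S^1)\otimes\mathbb{K}\otimes D))$ with $\sigma_\beta=\operatorname{Ad}w\circ\sigma_{{\rm id}_{E_{n+1}\otimes D}}$ from Corollary \ref{abscor}, and then uses the normalization at $0\in S^1$ a second time: ${\rm ev}_0(w)$ commutes with the image of the unital extension ${\rm ev}_0\circ\sigma_{{\rm id}_{E_{n+1}\otimes D}}$, so $[{\rm ev}_0(w)]_1=0$ by Proposition \ref{k}; since ${\rm ev}_0$ is injective on $K_1$ of the corona (as $K_1(D)=0$), this gives $[w]_1=0$, and $K_1$-injectivity of $\mathcal{Q}(C(S^1)\otimes\mathbb{K}\otimes D)$ together with contractibility of $U(\mathcal{M}(C(S^1)\otimes\mathbb{K}\otimes D))$ lets one lift $w$ to a multiplier unitary, which is what $\sim_{s.u.e}$ requires. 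Your proposal uses the identity ${\rm ev}_0\circ\sigma_\beta={\rm ev}_0\circ\sigma_{{\rm id}_{E_{n+1}\otimes D}}$ only for the $\operatorname{Ext}$ computation, so it has no mechanism to see this obstruction vanish; some argument of the above kind must be added to close the gap.
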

\begin{proof}
Using Theorem \ref{ext}, the isomorphism ${{\rm ev}_0}_* : \operatorname{Ext}(\mathcal{O}_{n+1}\otimes D, C(S^1)\otimes \mathbb{K}\otimes D)\cong \operatorname{Ext}(\mathcal{O}_{n+1}\otimes D, \mathbb{K}\otimes D)$ yields $[\sigma_\beta]=[\sigma_{{\rm id}_{E_{n+1}\otimes D}}]\in \operatorname{Ext}(\mathcal{O}_{n+1}\otimes D, C(S^1)\otimes \mathbb{K}\otimes D)$.
By Corollary \ref{abscor}, there is a unitary $w\in U(\mathcal{Q}(C(S^1)\otimes\mathbb{K}\otimes D))$ with $\sigma_\beta =\operatorname{Ad}w\circ\sigma_{{\rm id}_{E_{n+1}\otimes D}}$.
One has $${\rm ev_0}\circ\sigma_{{\rm id}_{E_{n+1}\otimes D}}=\operatorname{Ad}{\rm ev_0}(w)\circ({\rm ev}_0\circ\sigma_{{\rm id}_{E_{n+1}\otimes D}}),$$
and Proposition \ref{k} shows $[{\rm ev}_0(w)]_1=0\in K_0(\mathcal{Q}(\mathbb{K}\otimes D))$ that implies $[w]_1=0\in K_1(\mathcal{Q}(C(S^1)\otimes \mathbb{K}\otimes D))$.
Now $K_1$-injectivity of $\mathcal{Q}(C(S^1)\otimes \mathbb{K}\otimes D)$ proves the statement.
\end{proof}
\begin{thm}\label{pc}
The group $\operatorname{Aut}(E_{n+1}\otimes D)$ is path connected.
\end{thm}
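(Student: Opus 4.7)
The plan is to construct, for an arbitrary $\beta\in \operatorname{Aut}(E_{n+1}\otimes D)$, a continuous path in $\operatorname{Aut}(E_{n+1}\otimes D)$ from ${\rm id}_{E_{n+1}\otimes D}$ to $\beta$. The strategy is to upgrade Lemma \ref{sue} from an equivalence of the ``constant-slice'' Busby invariants $\sigma_\bullet$ to an equivalence of the Busby invariants of the \emph{full} mapping-torus extensions, translate this into a $C(S^1)$-linear isomorphism $C(S^1)\otimes E_{n+1}\otimes D\cong M_\beta$, and then read off the path from its fiberwise components.

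First I would apply Lemma \ref{sue} to obtain a unitary $U\in U(\mathcal{M}(C(S^1)\otimes\mathbb{K}\otimes D))$ with $\sigma_\beta = \operatorname{Ad}\pi(U)\circ\sigma_{{\rm id}_{E_{n+1}\otimes D}}$. Let $\tilde{\sigma}_\bullet := \tau_\bullet\circ\phi_{\tilde{\bullet}}^{-1} : C(S^1)\otimes\mathcal{O}_{n+1}\otimes D\to\mathcal{Q}(C(S^1)\otimes\mathbb{K}\otimes D)$ denote the Busby invariant of $0\to C(S^1)\otimes\mathbb{K}\otimes D\to M_\bullet\to C(S^1)\otimes\mathcal{O}_{n+1}\otimes D\to 0$; the $C(S^1)$-linearity of $\tau_\bullet$ and $\phi_{\tilde{\bullet}}^{-1}$ gives the identity $\tilde{\sigma}_\bullet(f\otimes d) = \pi(f\otimes 1)\,\sigma_\bullet(d)$. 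Since $U$ commutes with the central copy of $C(S^1)$ in $\mathcal{M}(C(S^1)\otimes\mathbb{K}\otimes D)$, a short computation would yield
$$\operatorname{Ad}\pi(U)\bigl(\tilde{\sigma}_{{\rm id}_{E_{n+1}\otimes D}}(f\otimes d)\bigr) = \pi(f\otimes 1)\,\sigma_\beta(d) = \tilde{\sigma}_\beta(f\otimes d),$$
so that the same $U$ implements $\tilde{\sigma}_\beta = \operatorname{Ad}\pi(U)\circ\tilde{\sigma}_{{\rm id}_{E_{n+1}\otimes D}}$. Consequently $\operatorname{Ad}U$ would descend to a $C(S^1)$-linear $*$-isomorphism $\Psi : C(S^1)\otimes E_{n+1}\otimes D\to M_\beta$ of the corresponding pullback extension algebras, fixing the ideal $C(S^1)\otimes\mathbb{K}\otimes D$.

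Then for each $y\in E_{n+1}\otimes D$ the element $F := \Psi(1_{C(S^1)}\otimes y)\in M_\beta$ would satisfy the defining relation $F(0) = \beta(F(1))$. Setting $\Psi_t(y) := F(t)$ would produce a continuous family $\{\Psi_t\}_{t\in [0,1]}\subset\operatorname{Aut}(E_{n+1}\otimes D)$ (each $\Psi_t$ being the fiberwise automorphism of the $C(S^1)$-isomorphism $\Psi$ at $t$), and the relation on $F$ would become $\Psi_0 = \beta\circ\Psi_1$. Hence $\beta = \Psi_0\circ\Psi_1^{-1}$, and the path $t\mapsto\Psi_{1-t}\circ\Psi_1^{-1}$ would connect ${\rm id}_{E_{n+1}\otimes D}$ to $\beta$ in $\operatorname{Aut}(E_{n+1}\otimes D)$.

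The hard part will be the promotion of the strong unitary equivalence from the constant-slice invariants $\sigma_\bullet$ to the full $C(S^1)$-linear invariants $\tilde{\sigma}_\bullet$ via the \emph{same} implementing unitary; this hinges crucially on the centrality of $C(S^1)$ in the multiplier algebra and on the $C(S^1)$-linearity of the Busby invariants. The remaining steps---passing from the intertwining of full Busby invariants to a $C(S^1)$-linear isomorphism of pullback extension algebras, and then to a continuous path of fiberwise automorphisms---should amount to a routine fiberwise unpacking.
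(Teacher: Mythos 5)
Your proposal is correct and takes essentially the same route as the paper: the paper's (one-line) proof deduces from Lemma \ref{sue} a $C(S^1)$-linear isomorphism $M_\beta\cong C(S^1)\otimes E_{n+1}\otimes D$ and reads off from it a homotopy from $\beta$ to ${\rm id}_{E_{n+1}\otimes D}$, which is exactly your argument with the intermediate steps (centrality of $C(S^1)$ in the multiplier algebra, the pullback description of the extensions, and the fiberwise evaluation of the isomorphism) written out explicitly.
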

\begin{proof}
By Lemma \ref{sue}, there is a $C(S^1)$-linear isomorphism $M_\beta\cong C(S^1)\otimes E_{n+1}\otimes D$,
and this gives a homotopy from $\beta$ to ${{\rm id}_{E_{n+1}\otimes D}}$.
\end{proof}

To prove our main technical result,
it remains to show that the inclusion $\operatorname{Aut}(E_{n+1}\otimes D)\hookrightarrow\operatorname{End}_0(E_{n+1}\otimes D)$ is a weak homotopy equivalence.
\begin{lem}\label{surject}
The map $\pi_{2m-1}(\operatorname{Aut}(E_{n+1}\otimes D))\to\pi_{2m-1}(\operatorname{End}_0(E_{n+1}\otimes D))$ is surjective.
\end{lem}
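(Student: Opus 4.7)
The plan is to show that every pointed map $\Gamma\colon S^{2m-1}\to\operatorname{End}_0(E_{n+1}\otimes D)$ is homotopic, in $\operatorname{End}_0(E_{n+1}\otimes D)$, to a map factoring through $\operatorname{Aut}(E_{n+1}\otimes D)$. First I would reduce to the Cuntz quotient: since $q$ is a weak equivalence by Theorem \ref{mt1} and $\operatorname{Aut}(\mathcal{O}_{n+1}\otimes D)\hookrightarrow\operatorname{End}(\mathcal{O}_{n+1}\otimes D)$ is a weak equivalence by Theorem \ref{L} applied to the Cuntz algebra $\mathcal{O}_{n+1}\otimes D$, I may modify $\Gamma$ within its homotopy class so that $\alpha:=q\circ\Gamma\colon S^{2m-1}\to\operatorname{Aut}(\mathcal{O}_{n+1}\otimes D)$ is automorphism-valued.

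Next I would generalise the Busby-invariant argument of Lemma \ref{sue} and Theorem \ref{pc} from $S^1$ to $S^{2m-1}$. Viewing $\Gamma$ as a $C(S^{2m-1})$-linear unital $*$-endomorphism of $C(S^{2m-1})\otimes E_{n+1}\otimes D$, I would form $C(S^{2m-1})$-algebras $M_\Gamma,\,C_\Gamma$ and $A_\alpha$ analogous to $M_\beta,\,C_\beta$ and $A_{\tilde\beta}$. The $K_0$-argument underlying Lemma \ref{pcc} identifies the fiberwise data on $\mathbb{K}\otimes D$ with $\operatorname{Aut}_0(\mathbb{K}\otimes D)$ via the Dadarlat--Pennig correspondence $\operatorname{Aut}_0\simeq\mathcal{P}_0$ of Theorem \ref{DPT}, and Lemma \ref{triv} then applies because $\operatorname{Tor}(K^0(S^{2m-1}),\mathbb{Z}_n)=\operatorname{Tor}(\mathbb{Z},\mathbb{Z}_n)=0$, giving a $C(S^{2m-1})$-linear trivialisation $C_\Gamma\cong C(S^{2m-1})\otimes\mathbb{K}\otimes D$. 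Combined with the trivialisation of the quotient by $\alpha$, this produces an essential unital Busby invariant
$$\sigma_\Gamma\colon \mathcal{O}_{n+1}\otimes D\to \mathcal{Q}(C(S^{2m-1})\otimes\mathbb{K}\otimes D),$$
which by pointedness satisfies $\mathrm{ev}_{x_0}\circ\sigma_\Gamma=\mathrm{ev}_{x_0}\circ\sigma_{\mathrm{id}_{E_{n+1}\otimes D}}$.

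Theorem \ref{ext} applied with $A=C(S^{2m-1})\otimes D$ (in the bootstrap class since $D$ satisfies the UCT), combined with K\"unneth and $K_1(D)=0$, shows that evaluation at the base point induces an isomorphism $\operatorname{Ext}(\mathcal{O}_{n+1}\otimes D,\,C(S^{2m-1})\otimes\mathbb{K}\otimes D)\cong \operatorname{Ext}(\mathcal{O}_{n+1}\otimes D,\,\mathbb{K}\otimes D)$, hence $[\sigma_\Gamma]=[\sigma_{\mathrm{id}_{E_{n+1}\otimes D}}]$ in the Ext group. Corollary \ref{abscor}, Proposition \ref{k} and $K_1$-injectivity of $\mathcal{Q}(C(S^{2m-1})\otimes\mathbb{K}\otimes D)$ would then promote this to strong unitary equivalence $\sigma_\Gamma\sim_{s.u.e}\sigma_{\mathrm{id}_{E_{n+1}\otimes D}}$ exactly as in the proof of Lemma \ref{sue}, and the implementing unitary in $\mathcal{M}(C(S^{2m-1})\otimes\mathbb{K}\otimes D)$ gives a $C(S^{2m-1})$-linear isomorphism $M_\Gamma\cong C(S^{2m-1})\otimes E_{n+1}\otimes D$. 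Unwinding this isomorphism as in Theorem \ref{pc} produces the desired homotopy from $\Gamma$ to an $\operatorname{Aut}(E_{n+1}\otimes D)$-valued map.

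The main obstacle I expect is setting up the $C(S^{2m-1})$-algebras $M_\Gamma,\,C_\Gamma$ correctly for the higher-dimensional base, since the mapping-torus description of Lemma \ref{sue} does not transparently generalise to $S^{2m-1}$; and verifying that Lemma \ref{triv} really trivialises $C_\Gamma$ even though $\Gamma$ takes values only in $\operatorname{End}_0$ (so a priori the fiberwise restriction to $\mathbb{K}\otimes D$ need not be an automorphism, and one must use the $K_0$-equivalence of $\Gamma_x(e\otimes 1_D)$ with $e\otimes 1_D$ together with Theorem \ref{DPT} to recover an honest map into $\operatorname{Aut}_0(\mathbb{K}\otimes D)$ at the classifying-space level). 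The restriction to odd dimensions $2m-1$ is natural because by Theorem \ref{mt1} and Theorem \ref{L} one has $\pi_{\mathrm{even}}(\operatorname{End}_0(E_{n+1}\otimes D))=0$, so surjectivity is automatic in even degrees.
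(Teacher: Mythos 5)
Your plan has a genuine gap at its core: the objects it relies on are never defined and cannot be defined in the way you suggest. The algebras $M_\beta$, $C_\beta$, $A_{\tilde\beta}$ of Lemma \ref{sue} and Theorem \ref{pc} are mapping tori built from a \emph{single automorphism} by clutching over $S^1$; for a sphere-indexed family $\Gamma\colon S^{2m-1}\to\operatorname{End}_0(E_{n+1}\otimes D)$ there is no analogous $C(S^{2m-1})$-algebra $M_\Gamma$ or $C_\Gamma$ (a family of automorphisms on $S^{2m-1}$ would clutch to a bundle over $S^{2m}$, and triviality of that bundle detects \emph{vanishing} of the clutching class --- this is exactly the mechanism the paper uses for the injectivity half after Lemma \ref{fib}, following \cite{ST} --- it does not produce an automorphism-valued representative of a given class, which is what surjectivity requires). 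Moreover $\Gamma$ is only endomorphism-valued: the fibrewise restriction $\Gamma_x|_{\mathbb{K}\otimes D}$ is an injective but in general non-surjective endomorphism of $\mathbb{K}\otimes D$, so Lemma \ref{pcc} and Lemma \ref{triv} (whose hypotheses are maps into $\operatorname{Aut}(E_{n+1}\otimes D)$) cannot be invoked to trivialise anything --- using them here is circular, since an automorphism-valued map is precisely what you are trying to construct. Finally, your opening reduction ``I may modify $\Gamma$ within its homotopy class so that $q\circ\Gamma$ is automorphism-valued'' is unjustified: $q$ and the inclusion $\operatorname{Aut}(\mathcal{O}_{n+1}\otimes D)\subset\operatorname{End}(\mathcal{O}_{n+1}\otimes D)$ being weak homotopy equivalences gives no homotopy-lifting property, so knowing that $q\circ\Gamma$ is homotopic to an $\operatorname{Aut}$-valued map does not let you deform $\Gamma$ inside $\operatorname{End}_0(E_{n+1}\otimes D)$ so that its image under $q$ lands in $\operatorname{Aut}(\mathcal{O}_{n+1}\otimes D)$; producing a continuous family of endomorphism lifts of a family of automorphisms of $\mathcal{O}_{n+1}\otimes D$ is essentially an instance of the statement being proved.

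The paper's proof is much softer and avoids extension theory entirely at this point: by \cite[Lemma 3.4, Theorem 3.14]{ST} and \cite[Theorem 7.4]{D2} one already knows, for $D=\mathbb{C}$, that $\operatorname{Aut}(E_{n+1})\hookrightarrow\operatorname{End}_0(E_{n+1})$ is a weak equivalence and that $\pi_{2m-1}(\operatorname{End}_0(E_{n+1}))\to\pi_{2m-1}(\operatorname{End}(\mathcal{O}_{n+1}))$ is surjective; the diagram in the proof of Lemma \ref{we} shows that ${\otimes\operatorname{id}_D}_*\colon\pi_{2m-1}(\operatorname{End}(\mathcal{O}_{n+1}))\to\pi_{2m-1}(\operatorname{End}(\mathcal{O}_{n+1}\otimes D))$ is surjective because $K_1(\otimes 1_D)$ is onto. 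Combining these with the identification $\pi_{2m-1}(\operatorname{End}_0(E_{n+1}\otimes D))\cong\pi_{2m-1}(\operatorname{End}(\mathcal{O}_{n+1}\otimes D))$ from Theorem \ref{mt1} and the naturality of $\otimes\operatorname{id}_D$, every class in $\pi_{2m-1}(\operatorname{End}_0(E_{n+1}\otimes D))$ is represented by $\alpha\otimes\operatorname{id}_D$ with $\alpha\colon S^{2m-1}\to\operatorname{Aut}(E_{n+1})$, and a diagram chase finishes the proof. If you want to keep the Busby-invariant machinery in your toolkit, it is the right tool for the injectivity statement (Theorem \ref{ry}), not for this surjectivity lemma; your closing remark that even-degree surjectivity is automatic because $\pi_{\mathrm{even}}(\operatorname{End}_0(E_{n+1}\otimes D))=0$ is correct and agrees with the paper.
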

\begin{proof}
By \cite[Lemma 3.4, Theorem 3.14]{ST} and \cite[Theorem 7.4]{D2},
the map $\pi_{2m-1}(\operatorname{End}_0(E_{n+1}))\to\pi_{2m-1}(\operatorname{End}(\mathcal{O}_{n+1}))$ is surjective.
By the diagram in the proof of Lemma \ref{we}, the map ${\otimes {\rm id}_D}_* : \pi_{2m-1}(\operatorname{End}(\mathcal{O}_{n+1}))\to\pi_{2m-1}(\operatorname{End}(\mathcal{O}_{n+1}\otimes D))$ is surjective.
Now Theorem \ref{mt1} and \cite[Theorem 3.14]{ST} yield the following commutative diagram
$$\xymatrix{
\pi_{2m-1}(\operatorname{Aut}(E_{n+1}\otimes D))\ar[r]&\pi_{2m-1}(\operatorname{End}_0(E_{n+1}\otimes D))\ar@{=}[r]^{Thm \ref{mt1}}&\pi_{2m-1}(\operatorname{End}(\mathcal{O}_{n+1}\otimes D))\\
\pi_{2m-1}(\operatorname{Aut}(E_{n+1}))\ar[u]^{{\otimes {\rm id}_D}_*}\ar@{=}[r]&\pi_{2m-1}(\operatorname{End}_0(E_{n+1}))\ar[u]^{{\otimes{\rm id}_D}_*}\ar@{>>}[r]&\pi_{2m-1}(\operatorname{End}(\mathcal{O}_{n+1})),\ar@{>>}[u]^{Lem \ref{we}}
}$$
and diagram chasing proves the statement.
\end{proof}
Since Theorem \ref{mt1} yields $\pi_{2m}(\operatorname{End}_0(E_{n+1}\otimes D))=0$, the above lemma shows the surjectivity of the map $\pi_k(\operatorname{Aut}(E_{n+1}\otimes D))\to\pi_k(\operatorname{End}_0(E_{n+1}\otimes D)), k\geq 0$.
We refer to \cite[Lemma 2.8, 2.16, Corollary 2.9]{DP} for the proof of the following lemma.
\begin{lem}\label{fib}
The map $\operatorname{End}_0(E_{n+1}\otimes D)\ni \beta\mapsto \beta(e\otimes 1_D)\in \mathcal{P}_0(\mathbb{K}\otimes D)$,
gives two fibrations
$$\operatorname{Aut}_{e\otimes 1_D}(E_{n+1}\otimes D)\to\operatorname{Aut}(E_{n+1}\otimes D) \to\mathcal{P}_0(\mathbb{K}\otimes D),$$
$$\operatorname{End}_{e\otimes 1_D}(E_{n+1}\otimes D)\to\operatorname{End}_0(E_{n+1}\otimes D)\to\mathcal{P}_0(\mathbb{K}\otimes D)$$
where $\operatorname{Aut}_{e\otimes 1_D}(E_{n+1}\otimes D)$ $($resp. $\operatorname{End}_{e\otimes 1_D}(E_{n+1}\otimes D)$$)$ is a subset of $\operatorname{Aut}(E_{n+1}\otimes D)$ $($resp. $\operatorname{End}_0(E_{n+1}\otimes D)$$)$ consisting of $*$-automorphisms fixing $e\otimes 1_D$.
\end{lem}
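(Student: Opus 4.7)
The plan is to follow the template of Dadarlat--Pennig \cite[Lemma 2.8, Corollary 2.9]{DP}, which establishes the analogous fibration $\operatorname{Aut}(\mathbb{K}\otimes D)\to\mathcal{P}_0(\mathbb{K}\otimes D)$. The fibers over $e\otimes 1_D$ are $\operatorname{Aut}_{e\otimes 1_D}(E_{n+1}\otimes D)$ and $\operatorname{End}_{e\otimes 1_D}(E_{n+1}\otimes D)$ by definition, and the image of the evaluation map $\beta\mapsto\beta(e\otimes 1_D)$ lies in $\mathcal{P}_0(\mathbb{K}\otimes D)$: this is immediate from path-connectedness for $\operatorname{End}_0$, and follows from Lemma \ref{pcc} combined with Theorem \ref{pc} for $\operatorname{Aut}(E_{n+1}\otimes D)$. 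It thus remains to produce continuous local sections of the evaluation map.

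For local triviality at $e\otimes 1_D$, I use the standard unitary-perturbation trick. Any $p\in\mathcal{P}_0(\mathbb{K}\otimes D)$ lies in $\mathbb{K}\otimes D\subset E_{n+1}\otimes D$, so for $\|p-e\otimes 1_D\|<1$ the element
$$z_p\;:=\;p(e\otimes 1_D)+(1-p)(1-e\otimes 1_D)\;\in\; E_{n+1}\otimes D$$
is invertible and depends norm-continuously on $p$ with $z_{e\otimes 1_D}=1$. The unitary $u_p:=z_p|z_p|^{-1}\in U_0(E_{n+1}\otimes D)$ satisfies $u_p(e\otimes 1_D)u_p^{\ast}=p$, so $\operatorname{Ad}(u_p)\in\operatorname{Aut}(E_{n+1}\otimes D)\subset\operatorname{End}_0(E_{n+1}\otimes D)$ is a continuous local section of both evaluation maps on the open neighborhood $\{p:\|p-e\otimes 1_D\|<1\}$.

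To trivialize around an arbitrary $p_0\in\mathcal{P}_0(\mathbb{K}\otimes D)$, it suffices to produce a unitary $w\in U(E_{n+1}\otimes D)$ with $\operatorname{Ad}(w)(e\otimes 1_D)=p_0$; then $p\mapsto\operatorname{Ad}(u'_p)\circ\operatorname{Ad}(w)$, with $u'_p$ obtained from the same formula applied to $(p,p_0)$, is a local section at $p_0$. The MvN equivalence $e\otimes 1_D\sim p_0$ in $\mathbb{K}\otimes D$ gives a partial isometry $u\in\mathbb{K}\otimes D$ with $u^{\ast}u=e\otimes 1_D$, $uu^{\ast}=p_0$. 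For the complements, $1-e\otimes 1_D=\sum T_iT_i^{\ast}\otimes 1_D$ is properly infinite in $E_{n+1}\otimes D$ (it dominates $n+1$ copies of the unit via the $T_i\otimes 1_D$), and $1-p_0$ is properly infinite as well since it is unitarily equivalent in $E_{n+1}\otimes D$ to $1-p_t$ along any continuous path $p_t$ in $\mathcal{P}_0(\mathbb{K}\otimes D)$ from $e\otimes 1_D$ to $p_0$; both projections share the $K_0$-class $(n+1)[1_D]\in K_0(E_{n+1}\otimes D)=K_0(D)$, so Cuntz's classification of properly infinite projections yields a partial isometry $v\in E_{n+1}\otimes D$ with $v^{\ast}v=1-e\otimes 1_D$, $vv^{\ast}=1-p_0$. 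The orthogonality of sources and ranges forces $u^{\ast}v=v^{\ast}u=uv^{\ast}=vu^{\ast}=0$, making $w:=u+v$ a unitary in $E_{n+1}\otimes D$ with $w(e\otimes 1_D)w^{\ast}=uu^{\ast}=p_0$. Assembling these local sections exhibits both evaluation maps as locally trivial fiber bundles, and in particular as Serre fibrations.

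The main obstacle is the second MvN equivalence $1-e\otimes 1_D\sim 1-p_0$ inside $E_{n+1}\otimes D$ itself (as opposed to inside $\mathcal{M}(\mathbb{K}\otimes D)$, where Kuiper-type results would make it automatic). This is the one step where the proper infiniteness of $E_{n+1}\otimes D$ and the $K_0$-calculation enter essentially; the rest of the argument is the formal transport of Dadarlat--Pennig's local-section recipe from $\operatorname{Aut}(\mathbb{K}\otimes D)$ to the Cuntz--Toeplitz setting.
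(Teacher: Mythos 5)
Your argument is correct and is essentially the proof the paper delegates to \cite[Lemma 2.8, 2.16, Corollary 2.9]{DP}: continuous local sections of $\beta\mapsto\beta(e\otimes 1_D)$ given by inner automorphisms built from the standard polar-decomposition unitary for nearby projections, transported to an arbitrary $p_0\in\mathcal{P}_0(\mathbb{K}\otimes D)$ by a unitary of $E_{n+1}\otimes D$ conjugating $e\otimes 1_D$ to $p_0$. One minor simplification: the proper-infiniteness/Cuntz-classification step for $1-e\otimes 1_D\sim 1-p_0$ is superfluous, since any norm-continuous path in $\mathcal{P}_0(\mathbb{K}\otimes D)$ from $e\otimes 1_D$ to $p_0$ already produces a unitary $w\in U(E_{n+1}\otimes D)$ with $w(e\otimes 1_D)w^{*}=p_0$ (homotopic projections in a unital C*-algebra are unitarily equivalent), which is all your transport argument requires.
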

\begin{lem}
For $\alpha : S^{k}\to \operatorname{Aut}(E_{n+1}\otimes D)$ with $[\alpha]=0\in \pi_{k}(\operatorname{End}_0(E_{n+1}\otimes D))$,
there exists a map $\alpha'$ satisfying $[\alpha]=[\alpha']\in \pi_{k}(\operatorname{Aut}(E_{n+1}\otimes D))$ and $[\alpha']=0\in\pi_{k}(\operatorname{End}_{e\otimes 1_D}(E_{n+1}\otimes D))$.
\end{lem}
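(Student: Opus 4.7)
The plan is to exploit the two Serre fibrations of Lemma \ref{fib}, which share the same base $\mathcal{P}_0(\mathbb{K}\otimes D)$ and the same projection $\beta\mapsto\beta(e\otimes 1_D)$. The first step is to replace $\alpha$ with a homotopic map landing in $\operatorname{Aut}_{e\otimes 1_D}(E_{n+1}\otimes D)$. Since $[\alpha]=0$ in $\pi_k(\operatorname{End}_0(E_{n+1}\otimes D))$, the composition $x\mapsto\alpha(x)(e\otimes 1_D)$ is nullhomotopic in $\mathcal{P}_0(\mathbb{K}\otimes D)$; applying HLP of the fibration $\operatorname{Aut}(E_{n+1}\otimes D)\to\mathcal{P}_0(\mathbb{K}\otimes D)$ to such a nullhomotopy, starting from $\alpha$, yields a map $\alpha_0\colon S^k\to\operatorname{Aut}_{e\otimes 1_D}(E_{n+1}\otimes D)$ with $[\alpha_0]=[\alpha]$ in $\pi_k(\operatorname{Aut}(E_{n+1}\otimes D))$.

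Next I would run an exact sequence chase. Let $\partial_A$ and $\partial_E$ denote the boundary maps $\pi_{k+1}(\mathcal{P}_0(\mathbb{K}\otimes D))\to\pi_k(\operatorname{Aut}_{e\otimes 1_D}(E_{n+1}\otimes D))$ and $\pi_{k+1}(\mathcal{P}_0(\mathbb{K}\otimes D))\to\pi_k(\operatorname{End}_{e\otimes 1_D}(E_{n+1}\otimes D))$ of the two long exact sequences attached to the fibrations of Lemma \ref{fib}, and let $\iota_*$ be the map induced by the inclusion of fibers. Naturality under the morphism of fibrations given by inclusions together with the identity on the common base gives $\partial_E=\iota_*\circ\partial_A$. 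The class $\iota_*[\alpha_0]$ projects to $[\alpha]=0$ in $\pi_k(\operatorname{End}_0(E_{n+1}\otimes D))$, so by exactness of the End-sequence there is $\delta\in\pi_{k+1}(\mathcal{P}_0(\mathbb{K}\otimes D))$ with $\iota_*[\alpha_0]=\partial_E(\delta)=\iota_*\partial_A(\delta)$.

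Since $\operatorname{Aut}_{e\otimes 1_D}(E_{n+1}\otimes D)$ is a topological group its higher homotopy groups are abelian (the case $k=0$ being immediate from Theorem \ref{pc}), so I would pick $\alpha'\colon S^k\to\operatorname{Aut}_{e\otimes 1_D}(E_{n+1}\otimes D)$ representing $[\alpha_0]-\partial_A(\delta)\in\pi_k(\operatorname{Aut}_{e\otimes 1_D}(E_{n+1}\otimes D))$. Then $[\alpha']=[\alpha_0]=[\alpha]$ in $\pi_k(\operatorname{Aut}(E_{n+1}\otimes D))$ because $\partial_A(\delta)$ is killed there by exactness of the Aut-sequence, while $\iota_*[\alpha']=\iota_*[\alpha_0]-\iota_*\partial_A(\delta)=\iota_*[\alpha_0]-\partial_E(\delta)=0$ in $\pi_k(\operatorname{End}_{e\otimes 1_D}(E_{n+1}\otimes D))$, as required. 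The only nontrivial point is the naturality identity $\partial_E=\iota_*\partial_A$, which follows from the standard HLP-based construction of the connecting map applied in parallel to the two fibrations that share the common projection $\beta\mapsto\beta(e\otimes 1_D)$.
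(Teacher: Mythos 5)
Your proposal is correct and takes essentially the same route as the paper: both arguments run the identical diagram chase in the two long exact sequences coming from the fibrations of Lemma \ref{fib}, first replacing $\alpha$ by a homotopic map into the fiber $\operatorname{Aut}_{e\otimes 1_D}(E_{n+1}\otimes D)$ and then correcting it by an element in the image of the connecting map, using the naturality $\partial_E=\iota_*\circ\partial_A$ (which is exactly the paper's commutative diagram with the $K_0(D)$ column). The only cosmetic difference is that the paper splits into parities, invoking $\pi_{2m-1}(\mathcal{P}_0(\mathbb{K}\otimes D))=0$ and $\pi_{2m}(\operatorname{End}_{e\otimes 1_D}(E_{n+1}\otimes D))=0$, whereas you obtain the fiber-valued representative uniformly in $k$ directly from the hypothesis $[\alpha]=0\in\pi_k(\operatorname{End}_0(E_{n+1}\otimes D))$.
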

\begin{proof}
Lemma \ref{fib} and Theorem \ref{mt1} yield two long exact sequences of the homotopy groups and the following commutative diagram

$$
\scriptsize
\xymatrix{
\pi_{2m}(\operatorname{Aut}(E_{n+1}\otimes D))\ar[d]\ar[r]&K_0(D)\ar@{=}[d]\ar[r]&\pi_{2m-1}(\operatorname{Aut}_{e\otimes 1_D}(E_{n+1}\otimes D))\ar[d]\ar[r]&\pi_{2m-1}(\operatorname{Aut}(E_{n+1}\otimes D))\to 0\ar[d]\\
0\ar[r]&K_0(D)\ar[r]&\pi_{2m-1}(\operatorname{End}_{e\otimes 1_D}(E_{n+1}\otimes D))\ar[r]&\pi_{2m-1}(\operatorname{End}_0(E_{n+1}\otimes D))\to 0.
}$$
One has a map $\alpha'' : S^{2m-1}\to\operatorname{Aut}_{e\otimes 1_D}(E_{n+1}\otimes D)$ with $[\alpha]=[\alpha'']\in\pi_{2m-1}(\operatorname{Aut}(E_{n+1}\otimes D))$.
There is an element $a\in K_0(D)$ which is sent to $[\alpha'']\in \pi_{2m-1}(\operatorname{End}_{e\otimes 1_D}(E_{n+1}\otimes D))$,
and one can find a map $\beta : S^{2m-1}\to \operatorname{Aut}_{e\otimes 1_D}(E_{n+1}\otimes D)$ to which $-a$ is sent.
Since $[\beta]=0\in \pi_{2m-1}(\operatorname{Aut}(E_{n+1}\otimes D))$,
one has $[\alpha]=[\alpha'']=[\alpha']\in\pi_{2m-1}(\operatorname{Aut}(E_{n+1}\otimes D))$ for the map $\alpha' : S^{2m-1}\ni x\mapsto \beta_x\circ\alpha''_x\in\operatorname{Aut}_{e\otimes 1_D}(E_{n+1}\otimes D)$.
It is also checked that $[\alpha']=-a+a=0\in \pi_{2m-1}(\operatorname{End}_{e\otimes 1_D}(E_{n+1}\otimes D))$.
The map $\pi_{2m}(\operatorname{Aut}_{e\otimes 1_D}(E_{n+1}\otimes D))\to \pi_{2m}(\operatorname{Aut}(E_{n+1}\otimes D))$ is surjective by diagram chasing,
and we can prove the statement because $\pi_{2m}(\operatorname{End}_{e\otimes 1_D}(E_{n+1}\otimes D))=0$.
\end{proof}
The proof of \cite[Lemma 3.3]{ST} shows $\pi_{\rm even}(\operatorname{Aut}(E_{n+1}\otimes D))=0$.
Following the argument in \cite[Lemma 3.7]{ST}, we can determine the weak unitary equivalence class of the Busby invariant associated with $\alpha'$ and the same computation as in \cite[Lemma 3.8, 3.9, 3.10, Theorem 3. 11]{ST} proves $[\alpha']=0\in \pi_{\rm odd}(\operatorname{Aut}(E_{n+1}\otimes D))$.
Therefore the following theorem holds.
\begin{thm}\label{ry}
The inclusion $\operatorname{Aut}(E_{n+1}\otimes D)\hookrightarrow\operatorname{End}_0(E_{n+1}\otimes D)$ is a weak homotopy equivalence.
\end{thm}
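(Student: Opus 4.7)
The plan is to establish bijectivity of the inclusion on every homotopy group $\pi_k$. Path-connectedness of both spaces (Theorem \ref{pc} for the source, by definition for the target) handles $k=0$. For $k\geq 1$, I will split according to parity.

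In even degree $k=2m$, Theorem \ref{mt1} combined with Theorem \ref{L} yields $\pi_{2m}(\operatorname{End}_0(E_{n+1}\otimes D))=\pi_{2m}(\operatorname{End}(\mathcal{O}_{n+1}\otimes D))=0$, so it suffices to show $\pi_{2m}(\operatorname{Aut}(E_{n+1}\otimes D))=0$. The argument of \cite[Lemma 3.3]{ST}, which uses essentially only the structure of the ideal $\mathbb{K}\subset E_{n+1}$, carries over verbatim after tensoring by $D$, giving the vanishing. Hence both groups are zero in even degrees and the map is bijective.

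The odd case $k=2m-1$ is where the real work lies. Surjectivity is exactly Lemma \ref{surject}. For injectivity, given $\alpha\colon S^{2m-1}\to\operatorname{Aut}(E_{n+1}\otimes D)$ with $[\alpha]=0$ in $\pi_{2m-1}(\operatorname{End}_0(E_{n+1}\otimes D))$, I invoke the lemma immediately preceding the theorem to replace $\alpha$, without changing its class in $\operatorname{Aut}(E_{n+1}\otimes D)$, by a map $\alpha'$ which is based at ${\rm id}$ in the stabilizer $\operatorname{Aut}_{e\otimes 1_D}(E_{n+1}\otimes D)$ and null in $\operatorname{End}_{e\otimes 1_D}(E_{n+1}\otimes D)$. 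Thus I may assume outright that $\alpha'$ fixes $e\otimes 1_D$ fiberwise and is null-homotopic through endomorphisms fixing $e\otimes 1_D$.

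The main obstacle is to upgrade this $\operatorname{End}$-null-homotopy to an $\operatorname{Aut}$-null-homotopy, and the strategy is the parametrized Busby-invariant argument of \cite[Lemma 3.7--3.11, Theorem 3.11]{ST}, applied in the $D$-stabilized setting. Concretely, I form the $C(S^{2m-1}\times S^1)$-algebras $M_{\alpha'}$, $C_{\alpha'}$, $A_{\widetilde{\alpha'}}$ exactly as in the construction preceding Lemma \ref{sue}, but now over the parameter $S^{2m-1}$, and extract the associated family of essential unital extensions $\sigma_{\alpha'_x}\colon\mathcal{O}_{n+1}\otimes D\to\mathcal{Q}(C(S^1)\otimes\mathbb{K}\otimes D)$. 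The hypothesis that $\alpha'$ is null in $\operatorname{End}_{e\otimes 1_D}$ implies, via the same Ext computation used in Lemma \ref{sue} (Theorem \ref{ext}, Corollary \ref{abscor}, Proposition \ref{k}, and $K_1$-injectivity of the Calkin-type quotient), that this family of Busby invariants is weakly, and then strongly, unitarily equivalent to the family associated with ${\rm id}_{E_{n+1}\otimes D}$, in a way continuous in $x\in S^{2m-1}$. The parametrized isomorphism $M_{\alpha'}\cong C(S^{2m-1}\times S^1)\otimes E_{n+1}\otimes D$ it produces is exactly a null-homotopy of $\alpha'$ in $\operatorname{Aut}(E_{n+1}\otimes D)$, as required. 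The delicate point — and the reason one cannot merely invoke Lemma \ref{sue} pointwise — is ensuring that the Elliott--Kucerovsky absorption step can be performed continuously over $S^{2m-1}$, which is precisely what the $K_1$-injectivity of $\mathcal{Q}(C(S^{2m-1}\times S^1)\otimes\mathbb{K}\otimes D)$ together with Proposition \ref{k} is designed to accommodate.
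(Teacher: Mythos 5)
Your proposal is correct and follows essentially the same route as the paper: even homotopy groups vanish on both sides (Theorem \ref{mt1} together with the argument of \cite[Lemma 3.3]{ST} tensored by $D$), surjectivity in odd degrees is Lemma \ref{surject}, and injectivity is obtained by passing to the representative $\alpha'$ fixing $e\otimes 1_D$ via the preceding lemma and then running the $D$-stabilized Busby-invariant/absorption argument of \cite[Lemma 3.7--3.11, Theorem 3.11]{ST} with Theorem \ref{ext}, Corollary \ref{abscor}, Proposition \ref{k} and $K_1$-injectivity. This is exactly how the paper assembles the proof, so no further comparison is needed.
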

Combining Theorem \ref{L}, \ref{mt1}, \ref{ry}, we have the following.
\begin{cor}\label{mt3}
The group homomorphism $q : \operatorname{Aut}(E_{n+1}\otimes D)\to \operatorname{Aut}(\mathcal{O}_{n+1}\otimes D)$ is a weak homotopy equivalence.
In particular,
the map $\operatorname{B}(q) : \operatorname{BAut}(E_{n+1}\otimes D)\to \operatorname{BAut}(\mathcal{O}_{n+1}\otimes D)$ is a weak homotopy equivalence and gives the bijection $\operatorname{B}(q)_*$ of the homotopy sets.
\end{cor}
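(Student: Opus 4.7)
The plan is to package this as a standard two-out-of-three argument built on the three weak homotopy equivalences already established. I would consider the commutative square whose top row is $q:\operatorname{Aut}(E_{n+1}\otimes D)\to \operatorname{Aut}(\mathcal{O}_{n+1}\otimes D)$, whose bottom row is $q:\operatorname{End}_0(E_{n+1}\otimes D)\to \operatorname{End}(\mathcal{O}_{n+1}\otimes D)$, and whose vertical arrows are the natural inclusions. Commutativity is tautological, and the top $q$ is well defined: any $\alpha\in\operatorname{Aut}(E_{n+1}\otimes D)$ preserves the unique proper ideal $\mathbb{K}\otimes D$ and so descends to an automorphism of the quotient $\mathcal{O}_{n+1}\otimes D$.

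For the four sides of the square I would invoke: Theorem \ref{ry} for the left vertical inclusion; Theorem \ref{mt1} for the bottom $q$; and Theorem \ref{L} for the right vertical inclusion. The last step requires one small observation---Theorem \ref{L} is stated for $\operatorname{Aut}(\mathcal{O}_{k+1})\hookrightarrow \operatorname{End}(\mathcal{O}_{k+1})$, and under our standing hypothesis $\mathcal{O}_{n+1}\otimes D\cong \mathcal{O}_{m+1}$ (which follows from $D$ satisfying the UCT by \cite[Proposition 5.1]{TW}), applying Theorem \ref{L} to $\mathcal{O}_{m+1}$ supplies the desired weak equivalence $\operatorname{Aut}(\mathcal{O}_{n+1}\otimes D)\hookrightarrow \operatorname{End}(\mathcal{O}_{n+1}\otimes D)$. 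With three sides known to be weak homotopy equivalences, a routine diagram chase on the homotopy long exact sequences (or direct five-lemma application) forces the fourth to be one as well, proving the first assertion.

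For the ``in particular'' statement, since $q$ is a group homomorphism that is a weak homotopy equivalence between well-pointed topological groups, the bar construction yields a weak homotopy equivalence $\operatorname{B}(q):\operatorname{BAut}(E_{n+1}\otimes D)\to\operatorname{BAut}(\mathcal{O}_{n+1}\otimes D)$. Both classifying spaces have the homotopy type of CW complexes, so Whitehead's theorem applied to the finite CW complex $X$ delivers the bijection $\operatorname{B}(q)_*:[X,\operatorname{BAut}(E_{n+1}\otimes D)]\to [X,\operatorname{BAut}(\mathcal{O}_{n+1}\otimes D)]$ on homotopy sets. I do not anticipate a genuine obstacle here: the preceding three sections carry all the technical weight, and the present corollary is pure assembly; the only point demanding a moment's care is the reduction of Theorem \ref{L} to $\operatorname{Aut}(\mathcal{O}_{n+1}\otimes D)$ via the isomorphism $\mathcal{O}_{n+1}\otimes D\cong \mathcal{O}_{m+1}$.
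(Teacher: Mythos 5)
Your proposal is correct and is essentially the paper's own argument: the paper proves this corollary precisely by combining Theorem \ref{ry} (inclusion $\operatorname{Aut}(E_{n+1}\otimes D)\hookrightarrow\operatorname{End}_0(E_{n+1}\otimes D)$), Theorem \ref{mt1} (the map $q$ on $\operatorname{End}_0$), and Theorem \ref{L} applied to $\mathcal{O}_{n+1}\otimes D\cong\mathcal{O}_{m+1}$, exactly as in your commutative square with the two-out-of-three step. The passage to $\operatorname{B}(q)$ and the bijection of homotopy sets is the same standard assembly the paper leaves implicit.
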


\section{A topological invariant}
By \cite[Theorem 1.1]{D2}, all the continuous fields over finite CW-complexes are locally trivial,
and we identify them with the principal $\operatorname{Aut}(\mathcal{O}_{n+1})$ bundles.
\subsection{The invariant $\mathfrak{b}_D$}
\begin{dfn}\label{I}
For a finite CW complex $X$, we define $\mathfrak{b}_D : [X, \operatorname{BAut}(\mathcal{O}_{n+1})]\to E_D^1(X)$ by
\scriptsize$$[X, \operatorname{BAut}(\mathcal{O}_{n+1})]\xrightarrow{\operatorname{B}(\otimes {\rm id}_D)_*}[X, \operatorname{BAut}(\mathcal{O}_{n+1}\otimes D)]\xrightarrow{\operatorname{B}(q)_*^{-1}}[X, \operatorname{BAut}(E_{n+1}\otimes D)]\xrightarrow{\operatorname{B}(\eta_n)_*}[X, \operatorname{BAut}_0(\mathbb{K}\otimes D)]\subset E^1_D(X).$$
\end{dfn}
\begin{thm}\label{imageb}
The image of $\mathfrak{b}_D$ consists of $n^k$-torsion elements of $E_D^1(X)$ for some $k\geq 1$.
\end{thm}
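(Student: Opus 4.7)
The plan is to exploit the disparity between $\operatorname{BAut}(\mathcal{O}_{n+1})$, whose positive-degree homotopy groups are all isomorphic to $\mathbb{Z}_n$ or vanish, and the target space $\operatorname{BAut}_0(\mathbb{K}\otimes D)$, whose positive-degree homotopy groups are copies of the torsion-free group $K_0(D)$ or vanish, together with a parity mismatch between the two sides.

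First I would compute the relevant homotopy groups. From Theorem \ref{L} one has $\pi_{2k}(\operatorname{BAut}(\mathcal{O}_{n+1}))\cong\mathbb{Z}_n$ for $k\geq 1$ and $\pi_{2k+1}(\operatorname{BAut}(\mathcal{O}_{n+1}))=0$. Combining Theorem \ref{DPT}(2) with the identification $[S^j,\operatorname{Aut}_0(\mathbb{K}\otimes D)]\cong K_0(C_0(S^j,\ast)\otimes D)\cong\tilde{K}^0(S^j;D)$ and the standing hypotheses $K_1(D)=0$ and $\operatorname{Tor}(K_0(D))=0$, I would derive $\pi_j(\operatorname{BAut}_0(\mathbb{K}\otimes D))\cong K_0(D)$ for odd $j\geq 3$ and $\pi_j(\operatorname{BAut}_0(\mathbb{K}\otimes D))=0$ for the remaining $j\geq 1$. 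Both the parity argument and the torsion/torsion-free mismatch then force the composite map $\phi\colon\operatorname{BAut}(\mathcal{O}_{n+1})\to\operatorname{BAut}_0(\mathbb{K}\otimes D)$ underlying $\mathfrak{b}_D$ to vanish on every homotopy group.

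Next, for a finite CW complex $X$ with $d=\dim X$, any $\alpha\colon X\to\operatorname{BAut}(\mathcal{O}_{n+1})$ factors through the $d$-Postnikov section $P_d\operatorname{BAut}(\mathcal{O}_{n+1})$, which is built as a finite tower of principal $K(\mathbb{Z}_n,2j)$-fibrations for $1\leq j\leq\lfloor d/2\rfloor$. I would then apply the Atiyah--Hirzebruch spectral sequence for the Dadarlat--Pennig cohomology theory $E^*_D$ to $X$ and track the image of $\mathfrak{b}_D([\alpha])$ through the Postnikov tower of $\operatorname{BAut}_0(\mathbb{K}\otimes D)$. At each Postnikov stage the contribution is the image of an integral Bockstein arising from $0\to\mathbb{Z}\xrightarrow{n}\mathbb{Z}\to\mathbb{Z}_n\to 0$ tensored with the torsion-free coefficient group $K_0(D)$, hence is $n$-torsion. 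Iterating over the at most $\lfloor d/2\rfloor$ nontrivial stages should give a uniform bound $n^k\cdot\mathfrak{b}_D([\alpha])=0$ with $k\leq\lfloor d/2\rfloor$; this would recover the $n$-torsion image observed for $\dim X\leq 3$ and for suspensions in the introduction.

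The main obstacle I anticipate is making this Postnikov-tower/AHSS accounting rigorous: one must check that each Postnikov $k$-invariant of $\operatorname{BAut}(\mathcal{O}_{n+1})$ induces a cohomology operation whose image is detected by a single integral Bockstein, and that successive compositions yield a torsion contribution dividing $n^{\lfloor d/2\rfloor}$ rather than accumulating more rapidly. Equivalently, one needs to identify the relevant AHSS differentials with the expected Bockstein maps $H^{2j}(X;\mathbb{Z}_n)\to H^{2j+1}(X;K_0(D))$ and confirm that these are of order exactly $n$.
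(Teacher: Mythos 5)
Your route is genuinely different from the paper's, and as written it has a gap at exactly the step you flag. The paper's proof is two lines: since $\mathcal{O}_{n+1}\otimes\mathbb{M}_{n^\infty}\cong\mathcal{O}_2$ and $\operatorname{Aut}(\mathcal{O}_2)$ is contractible, the composition $\operatorname{B}(\otimes{\rm id}_{\mathbb{M}_{n^\infty}})_*\circ\mathfrak{b}_D=\mathfrak{b}_{D\otimes\mathbb{M}_{n^\infty}}$ is zero, and \cite[Theorem 2.11]{DP2} identifies the kernel of $E^1_D(X)\to E^1_{D\otimes\mathbb{M}_{n^\infty}}(X)$ with the $n^k$-torsion elements (tensoring with $\mathbb{M}_{n^\infty}$ inverts $n$ in the coefficient ring). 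In other words, the paper works at the level of the coefficient algebras and uses an already-available localization statement, whereas you propose to redo this localization by hand through a Postnikov/AHSS accounting. Your first observation (vanishing on homotopy groups) is correct but by itself proves nothing, since the whole point of $\mathfrak{b}_D$ is that the $k$-invariant/Bockstein contributions survive even though the map is zero on $\pi_*$; you recognize this and move to the Postnikov tower, which is the right idea.

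The genuine gap is the claim that ``at each Postnikov stage the contribution is the image of an integral Bockstein \dots hence is $n$-torsion,'' with the bound $k\leq\lfloor d/2\rfloor$, and your stated goal of confirming the relevant operations ``are of order exactly $n$.'' That refinement is both unproven and almost certainly too strong: the integral cohomology of $K(\mathbb{Z}_n,2j)$ contains torsion of order $n^2$ and higher (already $H^*(K(\mathbb{Z}_2,2);\mathbb{Z})$ has $\mathbb{Z}_4$ summands), so the cohomology operations induced by the $k$-invariants need not be detected by a single Bockstein of order $n$; indeed the paper leaves open whether the image can contain $n^k$-torsion with $k>1$, which it could not if your per-stage bound held. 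What \emph{can} be salvaged, and suffices for the theorem as stated, is the weaker statement that every contribution is killed by \emph{some} power of $n$: factor $X\to\operatorname{BAut}(\mathcal{O}_{n+1})\to P_d\operatorname{BAut}_0(\mathbb{K}\otimes D)$ through the Postnikov section $P_d\operatorname{BAut}(\mathcal{O}_{n+1})$ (possible since the target has no homotopy above degree $d$); this section is simply connected with all homotopy groups $\mathbb{Z}_n$, so its reduced integral homology consists of finite $n$-primary groups, and then obstruction theory against the stages $K(K_0(D),2j+1)$ of the target gives classification groups $\operatorname{Ext}_{\mathbb{Z}}^1(H_{2j}(P_d\operatorname{BAut}(\mathcal{O}_{n+1})),K_0(D))$ (the $\operatorname{Hom}$ parts vanish because $K_0(D)$ is torsion free), which are $n$-primary of bounded exponent; naturality of the group structure on $[\,\cdot\,,\operatorname{BAut}_0(\mathbb{K}\otimes D)]$ then bounds the order of $\mathfrak{b}_D([\alpha])$ by a power of $n$ depending on $d$. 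So your approach can be completed, but only after replacing the ``single Bockstein of order $n$'' accounting by this coarser $n$-primary estimate, and it remains considerably longer than the paper's argument via $\mathcal{O}_2$-absorption and \cite{DP2}.
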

\begin{proof}
Since $\operatorname{Aut}(\mathcal{O}_2)$ is contractible (see \cite{DP}) and $\mathcal{O}_{n+1}\otimes \mathbb{M}_{n^\infty}\cong \mathcal{O}_2$, the composition $\operatorname{B}(\otimes{\rm id}_{\mathbb{M}_{n^\infty}})_*\circ\mathfrak{b}_D=\mathfrak{b}_{D\otimes \mathbb{M}_{n^\infty}}$ is the 0 map.
Therefore the statement follows from \cite[Theorem 2.11]{DP2}.
\end{proof}
If ${\rm dim}X\leq 3$, or $X=SY$,
the map $\mathfrak{b}_D$ is given by the Bockstein map,
and hence the invariant is non-trivial (see Section 5).
\subsection{The invariant $\mathfrak{b}_{(n)}$}
We denote $\mathfrak{b}_{(n)}:=\mathfrak{b}_{M_{(n)}}$ for short.
For $r\geq 1$, with $\operatorname{GCD}(n, r)=1$,
the Kirchberg--Phillips theorem yields an isomorphism $\varphi_r : \mathcal{O}_{n+1}\to \mathcal{O}_{nr+1}\otimes M_{(n)}$,
and one has the following map
\scriptsize$$b_r : [X, \operatorname{BAut}(\mathcal{O}_{n+1})]\xrightarrow{\operatorname{B}(\operatorname{Ad}\varphi_r)_*}[X, \operatorname{BAut}(\mathcal{O}_{nr+1}\otimes M_{(n)})]\xrightarrow{\operatorname{B}(q)_*^{-1}}[X, \operatorname{BAut}(E_{nr+1}\otimes M_{(n)})]\xrightarrow{\operatorname{B}(\eta_{nr})_*}[X, \operatorname{BAut}(\mathbb{K}\otimes M_{(n)})].$$
\small
\begin{thm}\label{r1}
The map $b_r$ is equal to $b_1$.
\end{thm}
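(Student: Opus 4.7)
My plan is to reduce $b_r = b_1$ to an absorption argument comparing two extensions at the level of Busby invariants. Choose an isomorphism $l \colon M_{(n)} \otimes M_{(n)} \xrightarrow{\cong} M_{(n)}$ from strong self-absorption and set $\Phi_r := (\operatorname{id}_{\mathcal{O}_{nr+1}} \otimes l) \circ (\varphi_r \otimes \operatorname{id}_{M_{(n)}}) \colon \mathcal{O}_{n+1} \otimes M_{(n)} \xrightarrow{\cong} \mathcal{O}_{nr+1} \otimes M_{(n)}$. A short direct verification gives
$$\operatorname{Ad}\Phi_r(\alpha \otimes \operatorname{id}_{M_{(n)}}) = \operatorname{Ad}\varphi_r(\alpha), \qquad \alpha \in \operatorname{Aut}(\mathcal{O}_{n+1}),$$
so that $\operatorname{Ad}\varphi_r = \operatorname{Ad}\Phi_r \circ (- \otimes \operatorname{id}_{M_{(n)}})$. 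Since $\Phi_1 \in \operatorname{Aut}(\mathcal{O}_{n+1} \otimes M_{(n)})$, the map $\operatorname{B}(\operatorname{Ad}\Phi_1)$ is homotopic to the identity on $\operatorname{BAut}(\mathcal{O}_{n+1} \otimes M_{(n)})$, and $b_1$ coincides with the invariant $\mathfrak{b}_{M_{(n)}}$ of Definition \ref{I}.

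With this factorization, the theorem reduces to the claim that the diagram
$$\begin{CD}
\operatorname{BAut}(\mathcal{O}_{n+1} \otimes M_{(n)}) @>\operatorname{B}(\operatorname{Ad}\Phi_r)>> \operatorname{BAut}(\mathcal{O}_{nr+1} \otimes M_{(n)}) \\
@V\operatorname{B}\eta_n\,\circ\,\operatorname{B}(q_n)^{-1}VV  @VV\operatorname{B}\eta_{nr}\,\circ\,\operatorname{B}(q_{nr})^{-1}V \\
\operatorname{BAut}_0(\mathbb{K} \otimes M_{(n)}) @= \operatorname{BAut}_0(\mathbb{K} \otimes M_{(n)})
\end{CD}$$
commutes up to homotopy. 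Given a classifying map $f \colon X \to \operatorname{BAut}(\mathcal{O}_{n+1} \otimes M_{(n)})$, I would form the two parametrized essential unital extensions of the associated $\mathcal{O}_{n+1} \otimes M_{(n)}$-field by $C(X) \otimes \mathbb{K} \otimes M_{(n)}$ — the first coming from the $E_{n+1} \otimes M_{(n)}$-lift, the second obtained by applying $\operatorname{B}(\operatorname{Ad}\Phi_r)$ first and then lifting to $E_{nr+1} \otimes M_{(n)}$. Corollary \ref{abscor} should identify their Busby invariants up to weak unitary equivalence, and an argument in the spirit of Lemma \ref{sue}, using the $K_1$-injectivity of $\mathcal{Q}(C(X) \otimes \mathbb{K} \otimes M_{(n)})$ together with Proposition \ref{k} and Theorem \ref{ext}, should force the implementing unitary to have trivial $K_1$-class, so that the resulting automorphism of the $\mathbb{K} \otimes M_{(n)}$-ideal lies in $\operatorname{Aut}_0(\mathbb{K} \otimes M_{(n)})$ and is homotopic to the identity in a way compatible with the parametrization.

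The main obstacle is carrying out the absorption argument uniformly over $X$ rather than at a single fiber: the Lemma \ref{sue}-style argument works pointwise via absorption and $K_1$-triviality of the relative commutant, and promoting it to hold over all of $X$ amounts to re-running those arguments with $C(X) \otimes \mathbb{K} \otimes M_{(n)}$ in place of $\mathbb{K} \otimes M_{(n)}$, exploiting that tensoring with $C(X)$ preserves nuclearity and the $K_1$-injectivity of the corona algebra. Once this parametrized absorption is established, the homotopy commutativity of the diagram above, and hence $b_r = b_1$, follows.
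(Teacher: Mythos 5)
Your overall reduction---factor the invariants through a comparison of the two Toeplitz-type lifts and prove that the resulting square commutes up to homotopy---has the right shape, but the execution contains a genuine gap, plus one smaller inaccuracy. The smaller point first: $\operatorname{Ad}\Phi_r(\alpha\otimes\operatorname{id}_{M_{(n)}})$ is not equal to $\operatorname{Ad}\varphi_r(\alpha)$; computing gives $(\operatorname{id}\otimes l)\circ(\operatorname{Ad}\varphi_r(\alpha)\otimes\operatorname{id}_{M_{(n)}})\circ(\operatorname{id}\otimes l)^{-1}$, which agrees with $\operatorname{Ad}\varphi_r(\alpha)$ only after a homotopy of group homomorphisms built from an approximate intertwiner for $l$ (this is exactly what the paper proves separately, after Theorem \ref{r1}, to identify $\operatorname{B}(\operatorname{Ad}\varphi_1)_*$ with $\operatorname{B}(\otimes\operatorname{id}_{M_{(n)}})_*$); that is repairable. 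The serious problem is the central step: the two Busby invariants you propose to compare are in general \emph{not} weakly unitarily equivalent, so Corollary \ref{abscor} cannot identify them. Fiberwise, the extension coming from $E_{n+1}\otimes M_{(n)}$ has class $[\tau_1]$, while the one coming from $E_{nr+1}\otimes M_{(n)}$ transported by $\varphi_r$ has class $[\tau_r]=\frac{1}{r}[\tau_1]$ in $\operatorname{Ext}(\mathcal{O}_{n+1},\mathbb{K}\otimes M_{(n)})\cong\operatorname{Ext}^1_{\mathbb{Z}}(\mathbb{Z}_n,\mathbb{Z}_{(n)})$; these differ, so the comparison automorphism of the ideal cannot lie in $\operatorname{Aut}_0(\mathbb{K}\otimes M_{(n)})$---it must scale $K_0$ by $\frac{1}{r}$. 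This is precisely what the paper's proof supplies and your proposal lacks: Proposition \ref{isom} produces an automorphism $\beta_{\frac{1}{r}}$ of $\mathbb{K}\otimes M_{(n)}$ (built from $\alpha_{\frac{1}{r}}$ via Theorem \ref{DPT}, with the implementing unitary lifted thanks to the Paschke-unitary index computation of Lemma \ref{exx}) whose multiplier extension carries $E_{n+1}\otimes M_{(n)}$ onto $E_{nr+1}\otimes M_{(n)}$; the conclusion $b_r=b_1$ then follows because conjugation by the group element $\beta_{\frac{1}{r}}$ induces the identity on $[X,\operatorname{BAut}(\mathbb{K}\otimes M_{(n)})]$ even though $\beta_{\frac{1}{r}}\notin\operatorname{Aut}_0(\mathbb{K}\otimes M_{(n)})$. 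Without a counterpart of $\alpha_{\frac{1}{r}}$ and Lemma \ref{exx}, your claim that the induced automorphism of the ideal is homotopic to the identity is simply false, and the $K_1$-obstruction argument never gets started.

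A secondary difficulty is your plan to run the absorption machinery "uniformly over $X$": Corollary \ref{abscor}, Proposition \ref{k} and the purely-large lemma are proved for unital extensions of $\mathcal{O}_{n+1}$ (equivalently $\mathcal{O}_{m+1}$) by $A\otimes\mathbb{K}$, with pure largeness checked via simplicity of the quotient, Theorem \ref{ext} requiring the Hom-term to vanish, and Proposition \ref{k} requiring $K_1(A)=\operatorname{Tor}(K_0(A),\mathbb{Z}_n)=0$; none of this transfers automatically when the quotient is a nontrivial $\mathcal{O}_{n+1}\otimes M_{(n)}$-field over a general finite CW complex $X$. The paper avoids this entirely: the only parametrized absorption it ever needs is over $S^1$ (Lemma \ref{sue}), where path-connectedness of the automorphism groups lets one trivialize the quotient field first, and for Theorem \ref{r1} itself no bundle-level absorption over $X$ is used at all---only the single-algebra statement of Proposition \ref{isom}, strict commutativity of the two right-hand squares, the Kirchberg--Phillips homotopy for the left square, and functoriality of the classifying-space construction.
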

We identify $E_{nr+1}\otimes M_{(n)}$ with a subalgebra of $\mathcal{M}(\mathbb{K}\otimes M_{(n)})$ by a unital embedding $l_r : E_{nr+1}\otimes M_{(n)}\hookrightarrow\mathcal{M}(\mathbb{K}\otimes M_{(n)})$ which sends $e\otimes 1_{M_{(n)}}$ into $\mathcal{P}_0(\mathbb{K}\otimes M_{(n)})$.
We need the following proposition to prove Theorem \ref{r1}.
\begin{prop}\label{isom}
There is an automorphism $\beta_{\frac{1}{r}}\in\operatorname{Aut}(\mathbb{K}\otimes M_{(n)})$ inducing an automorphism of $\mathcal{M}(\mathbb{K}\otimes M_{(n)})$ that restricts to an isomorphism $\beta_{\frac{1}{r}} : E_{n+1}\otimes M_{(n)} \to E_{nr+1}\otimes M_{(n)}$.
\end{prop}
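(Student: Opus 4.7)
The plan is to produce $\beta_{1/r}$ by first constructing an abstract $*$-isomorphism $\hat{\beta}: E_{n+1}\otimes M_{(n)}\to E_{nr+1}\otimes M_{(n)}$. Any such isomorphism must map the common essential ideal $\mathbb{K}\otimes M_{(n)}$ to itself, so its restriction gives an automorphism of the ideal; by uniqueness of multiplier extensions, this restriction lifts uniquely to an automorphism of $\mathcal{M}(\mathbb{K}\otimes M_{(n)})$ whose restriction to $l_1(E_{n+1}\otimes M_{(n)})$ agrees with $l_r\circ\hat{\beta}\circ l_1^{-1}$.

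\textbf{Step 1 (Ext computation).} Set $G := K_0(M_{(n)}) = \mathbb{Z}[1/p : p\in P_n]$. By Theorem \ref{ext}, $\operatorname{Ext}(\mathcal{O}_{n+1}\otimes M_{(n)},\, \mathbb{K}\otimes M_{(n)}) \cong \operatorname{Ext}^1_{\mathbb{Z}}(\mathbb{Z}/n, G) = G/nG \cong \mathbb{Z}/n$. The six-term sequence realizes $[E_{m+1}\otimes M_{(n)}]$ as the Yoneda class of $[G\xrightarrow{-m}G\to\mathbb{Z}/n]$; a direct computation with the free resolution of $\mathbb{Z}/n$ identifies the classes of $E_{n+1}\otimes M_{(n)}$ and $E_{nr+1}\otimes M_{(n)}$ with $-1$ and $-r^{-1}$ in $\mathbb{Z}/n$, respectively. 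Here $r$ is invertible in $G$ because every prime factor of $r$ lies in $P_n$.

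\textbf{Step 2 (scaling and matching classes).} Since $r^{-1}$ is a positive order-unit of $G$, Elliott's classification of (stable) AF automorphisms yields $\alpha\in\operatorname{Aut}(\mathbb{K}\otimes M_{(n)})$ with $K_0(\alpha)=\cdot\, r^{-1}$; explicitly this can be built from $M_{(n)}\cong M_{r^\infty}\otimes M_{(n)}$ together with a scaling automorphism of $\mathbb{K}\otimes M_{r^\infty}$. By Kirchberg--Phillips, $\mathcal{O}_{n+1}\otimes M_{(n)}\cong\mathcal{O}_{n+1}$ and there is a unital isomorphism $\phi:\mathcal{O}_{n+1}\otimes M_{(n)}\to\mathcal{O}_{nr+1}\otimes M_{(n)}$, which forces $K_0(\phi)=\operatorname{id}_{\mathbb{Z}/n}$. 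Under the Ext isomorphism above, $\alpha_*$ is multiplication by $r^{-1}$ while $\phi^*$ is the identity, so both $\bar{\alpha}\circ\tau_1$ and $\tau_2\circ\phi$ represent $-r^{-1}$ and hence have equal class in $\operatorname{Ext}(\mathcal{O}_{n+1}\otimes M_{(n)},\, \mathbb{K}\otimes M_{(n)})$, where $\tau_1,\tau_2$ denote the Busby invariants of $E_{n+1}\otimes M_{(n)}$ and $E_{nr+1}\otimes M_{(n)}$.

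\textbf{Step 3 (lifting to an isomorphism of middle algebras).} The Lemma on purely large extensions preceding Corollary \ref{abscor}, applied after the identification $\mathcal{O}_{n+1}\otimes M_{(n)}\cong\mathcal{O}_{n+1}$, together with Theorem \ref{EKA}, shows that both of these unital extensions are absorbing; for absorbing extensions, equality of Ext classes upgrades to strong unitary equivalence. Thus there exists $U\in U(\mathcal{M}(\mathbb{K}\otimes M_{(n)}))$ with $\bar{\alpha}\circ\tau_1=\operatorname{Ad}\pi(U)\circ\tau_2\circ\phi$. Viewing $E_{m+1}\otimes M_{(n)}$ as the pullback $\{(m,b)\in\mathcal{M}(\mathbb{K}\otimes M_{(n)})\oplus(\mathcal{O}_{m+1}\otimes M_{(n)}) : \pi(m)=\tau(b)\}$, the formula $\hat{\beta}(m,b):=(U^*\bar{\alpha}(m)U,\,\phi(b))$ defines the required $*$-isomorphism. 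Its restriction to the ideal is $\beta_{1/r}:=\operatorname{Ad}(U^*)\circ\alpha\in\operatorname{Aut}(\mathbb{K}\otimes M_{(n)})$, still with $K_0(\beta_{1/r})=\cdot\, r^{-1}$.

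\textbf{The main obstacle.} The delicate point is Step 3: verifying purely largeness so as to invoke Theorem \ref{EKA} and thereby upgrade equality of Ext classes to an equivalence realized by a unitary in $\mathcal{M}(\mathbb{K}\otimes M_{(n)})$ rather than merely in $\mathcal{Q}(\mathbb{K}\otimes M_{(n)})$. Only such a multiplier unitary lets one splice the quotient identification $\phi$ and the ideal automorphism $\alpha$ into a genuine $*$-isomorphism of the middle algebras.
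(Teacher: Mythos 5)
Your Steps 1 and 2 run parallel to the paper: the paper likewise computes the Ext classes (its first lemma shows $[\tau_r]=\tfrac1r[\tau_1]$ in $\operatorname{Ext}(\mathcal{O}_{n+1},\mathbb{K}\otimes M_{(n)})\cong\operatorname{Ext}^1_{\mathbb{Z}}(\mathbb{Z}_n,\mathbb{Z}_{(n)})$) and produces the scaling automorphism $\alpha_{\frac1r}$ with $[\alpha_{\frac1r}(e\otimes 1)]_0=\tfrac1r$ from Theorem \ref{DPT}. The splicing at the end (conjugate the multiplier-level map by a lifted unitary and compose with the quotient isomorphism) is also how the paper concludes, via $\beta_{\frac1r}=\operatorname{Ad}V^*\circ\alpha_{\frac1r}$.

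However, Step 3 contains a genuine gap, and you have misidentified where the difficulty lies. Pure largeness is the easy part: the paper's unnumbered lemma already shows every unital extension of $\mathcal{O}_{n+1}$ (hence, after the Kirchberg--Phillips identification, of $\mathcal{O}_{n+1}\otimes M_{(n)}$) by $A\otimes\mathbb{K}$ is purely large. What pure largeness plus Theorem \ref{EKA} actually yields from equality of Ext classes is only \emph{weak} unitary equivalence, i.e.\ Corollary \ref{abscor}: a unitary $v\in U(\mathcal{Q}(\mathbb{K}\otimes M_{(n)}))$ with $\tilde{\alpha}_{\frac1r}\circ\tau_1=\operatorname{Ad}v\circ\tau_r$. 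Your assertion that "for absorbing extensions, equality of Ext classes upgrades to strong unitary equivalence" is not a theorem: strong equivalence classes of unital absorbing extensions differ from $\operatorname{Ext}$ by an obstruction living in $K_1(\mathcal{Q}(\mathbb{K}\otimes M_{(n)}))\cong K_0(\mathbb{K}\otimes M_{(n)})=\mathbb{Z}_{(n)}\neq 0$ (modulo $K_1$-classes of unitaries in the commutant of the image, which vanish here by Proposition \ref{k}). So one must actually prove that $[v]_1=0$ before a multiplier lift $U$ exists, and if this were automatic the paper would not need Lemma \ref{exx} at all. This is precisely the content the paper supplies: the Paschke-unitary index computation $r\delta([V_{\tau_r}]_1)=\delta([V_{\tau_1}]_1)=-1$ (Lemma \ref{exx}), followed by $-n\delta([v]_1)=\tfrac1r\delta([V_{\tau_1}]_1)-\delta([V_{\tau_r}]_1)=0$, torsion-freeness of $\mathbb{Z}_{(n)}$, and $K_1$-injectivity of the corona to conclude that $v$ lifts to $V\in U(\mathcal{M}(\mathbb{K}\otimes M_{(n)}))$. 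Your proposal contains no argument for this vanishing, so the multiplier unitary $U$ you need to splice $\phi$ and $\alpha$ into an isomorphism of the middle algebras is not obtained.
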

To show the proposition, we need the following two lemmas.
The embedding $l_r$ gives the Busby invariant $i_r : \mathcal{O}_{nr+1}\otimes M_{(n)}\to \mathcal{Q}(\mathbb{K}\otimes M_{(n)})$.
By the Kirchberg--Phillips theorem, the isomorphism $\varphi_r$ is determined uniquely upto homotopy,
and the Busby invariant $\tau_r :=i_r\circ\varphi_r$ is defined.
\begin{lem}
In $\operatorname{Ext}(\mathcal{O}_{n+1}, \mathbb{K}\otimes M_{(n)})\cong\operatorname{Ext}^1_{\mathbb{Z}}(\mathbb{Z}_n, \mathbb{Z}_{(n)})$, we have $[\tau_r]=\frac{1}{r}[\tau_1]$.
\end{lem}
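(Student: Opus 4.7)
The plan is to reduce the computation of $[\tau_r]$ to pure homological algebra via Theorem \ref{ext}. Since $K_1(\mathcal{O}_{n+1})=0$ and $K_1(M_{(n)})=0$, the $\operatorname{Hom}$ term in the short exact sequence of Theorem \ref{ext} vanishes, and (using that $M_{(n)}$ is in the bootstrap class) the theorem provides an isomorphism
\[
\operatorname{Ext}(\mathcal{O}_{n+1},\mathbb{K}\otimes M_{(n)})\xrightarrow{\cong}\operatorname{Ext}^1_{\mathbb{Z}}(\mathbb{Z}_n,\mathbb{Z}_{(n)})
\]
that sends a C*-extension to its induced $K_0$-group extension. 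Under the standard identification $\operatorname{Ext}^1_{\mathbb{Z}}(\mathbb{Z}_n,\mathbb{Z}_{(n)})\cong \mathbb{Z}_{(n)}/n\mathbb{Z}_{(n)}\cong\mathbb{Z}/n\mathbb{Z}$ (via the free resolution $0\to\mathbb{Z}\xrightarrow{n}\mathbb{Z}\to\mathbb{Z}_n\to 0$), an extension of the form $0\to\mathbb{Z}_{(n)}\xrightarrow{nu}\mathbb{Z}_{(n)}\xrightarrow{q}\mathbb{Z}_n\to 0$ with $q(1)=1$ and $u\in\mathbb{Z}_{(n)}^\times$ corresponds to $[1/u]\in\mathbb{Z}_{(n)}/n\mathbb{Z}_{(n)}$, as one sees by lifting $1\in\mathbb{Z}_n$ to $1\in\mathbb{Z}_{(n)}$ and writing $n=(nu)\cdot(1/u)$.

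Next I would identify the relevant $K_0$-extensions. By definition $\tau_r=i_r\circ\varphi_r$, so the extension of $\mathcal{O}_{n+1}$ by $\mathbb{K}\otimes M_{(n)}$ associated with $\tau_r$ is the pullback along $\varphi_r$ of the Cuntz--Toeplitz extension
\[
0\to\mathbb{K}\otimes M_{(n)}\to E_{nr+1}\otimes M_{(n)}\to\mathcal{O}_{nr+1}\otimes M_{(n)}\to 0.
\]
Applying the 6-term exact sequence from the excerpt with $A=M_{(n)}$ and $n$ replaced by $nr$, this induces the $K_0$-extension
\[
0\to\mathbb{Z}_{(n)}\xrightarrow{-nr}\mathbb{Z}_{(n)}\xrightarrow{q}\mathbb{Z}_n\to 0,
\]
where I use $\gcd(n,r)=1$ to identify $K_0(\mathcal{O}_{nr+1}\otimes M_{(n)})=\mathbb{Z}/nr\mathbb{Z}\otimes\mathbb{Z}_{(n)}\cong\mathbb{Z}/n\mathbb{Z}$; the surjection $q$ sends $[1_{E_{nr+1}\otimes M_{(n)}}]=1$ to $[1_{\mathcal{O}_{nr+1}\otimes M_{(n)}}]=1$. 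Since $\varphi_r$ is a unital isomorphism, $K_0(\varphi_r)\colon\mathbb{Z}_n\to\mathbb{Z}_n$ sends the unit class to the unit class and hence is the identity; therefore the pullback leaves the $K_0$-extension unchanged.

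Applying the computation from the first paragraph with $u=-r$ gives $[\tau_r]=-1/r\in\mathbb{Z}_{(n)}/n\mathbb{Z}_{(n)}$, and specializing to $r=1$ gives $[\tau_1]=-1$. Consequently $[\tau_r]=-1/r=(1/r)\cdot(-1)=(1/r)[\tau_1]$, where $1/r$ makes sense as a unit in $\mathbb{Z}_{(n)}$ (and in $\mathbb{Z}_n$) because $\gcd(n,r)=1$.

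The main obstacle is the sign and generator bookkeeping: one must carefully verify that under the natural identifications $K_0(\mathcal{O}_{n+1})\cong\mathbb{Z}_n$ and $K_0(\mathcal{O}_{nr+1}\otimes M_{(n)})\cong\mathbb{Z}_n$ (both sending the unit class to $1$), the Kirchberg--Phillips isomorphism $\varphi_r$ induces the identity on $K_0$, and that the sign of $-nr$ in the exponential/index map of the Cuntz--Toeplitz extension is consistent with the chosen convention that identifies the class of $0\to\mathbb{Z}_{(n)}\xrightarrow{nu}\mathbb{Z}_{(n)}\to\mathbb{Z}_n\to 0$ with $1/u$. Once these identifications are in place, the remainder is a routine homological computation.
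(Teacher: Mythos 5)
Your proposal is correct and follows essentially the same route as the paper: both invoke Theorem \ref{ext} (with the $\operatorname{Hom}$ terms vanishing) to identify $[\tau_r]$ with the $K_0$-group extension $[\mathbb{Z}_{(n)}\xrightarrow{-nr}\mathbb{Z}_{(n)}\to\mathbb{Z}_n]$ coming from the six-term sequence, and then read off the class $-1/r$ in $\operatorname{Ext}^1_{\mathbb{Z}}(\mathbb{Z}_n,\mathbb{Z}_{(n)})\cong\mathbb{Z}_{(n)}/n\mathbb{Z}_{(n)}$, giving $[\tau_r]=\frac{1}{r}[\tau_1]$. You merely spell out details the paper leaves implicit (the pullback along $\varphi_r$ and $K_0(\varphi_r)=\mathrm{id}$, and the explicit resolution computation), which is fine.
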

\begin{proof}
By Theorem \ref{ext}, the element $[\tau_r]$ corresponds to the extension of groups $[\mathbb{Z}_{(n)}\xrightarrow{-nr}\mathbb{Z}_{(n)}\to \mathbb{Z}_n]$.
If we identify $[\mathbb{Z}_{(n)}\xrightarrow{n}\mathbb{Z}_{(n)}\to \mathbb{Z}_n]$ with $\bar{1}\otimes 1\in \mathbb{Z}_n\otimes \mathbb{Z}_{(n)}$, then one has $[\tau_r]=\bar{1}\otimes \frac{-1}{r}$.
\end{proof}
By Theorem \ref{DPT},
there is an automorphism $\alpha_{\frac{1}{r}}\in\operatorname{Aut}(\mathbb{K}\otimes M_{(n)})$ with $[\alpha_{\frac{1}{r}}(e\otimes 1_{M_{(n)}})]_0=\frac{1}{r}\in K_0(M_{(n)})$.
For the induced automorphism $\tilde{\alpha_{\frac{1}{r}}} \in\operatorname{Aut}(\mathcal{Q}(\mathbb{K}\otimes M_{(n)}))$,
one has $$[\tilde{\alpha_{\frac{1}{r}}}\circ\tau_1]={\alpha_{\frac{1}{r}}}_*([\tau_1])=\frac{1}{r}[\tau_1]=[\tau_r].$$
By Corollary \ref{abscor}, we have a unitary $v\in\mathcal{Q}(\mathbb{K}\otimes M_{(n)})$ with $\tilde{\alpha_{\frac{1}{r}}}\circ\tau_1=\operatorname{Ad}v\circ\tau_r$.
We would like to show that $v$ lifts to an element in $U(\mathcal{M}(\mathbb{K}\otimes M_{(n)}))$.
For $\tau_r$,
we denote Paschke's unitary by
$$V_{\tau_r} : =\left(
\begin{array}{cc}
0_1&\tau_r(\mathbb{S})\\
\pi(w)&{\rm O}_{n+1}
\end{array}\right)\in\mathbb{M}_{n+2}(\mathcal{Q}(\mathbb{K}\otimes M_{(n)})),$$
where $w\in \mathbb{M}_{n+2}(\mathcal{M}(\mathbb{K}\otimes M_{(n)}))$ is a partial isometry with $ww^*=0_1\oplus 1_{n+1}, w^*w=1_1\oplus 0_{n+1}$ and $\mathbb{S} :=(S_1, \cdots, S_{n+1})$ (see \cite{P}).
\begin{lem}\label{exx}
We have $r\delta([V_{\tau_r}]_1)=\delta([V_{\tau_1}]_1)=-1\in K_0(M_{(n)})$.
\end{lem}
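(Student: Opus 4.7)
The plan is to compute $\delta([V_{\tau_r}]_1)$ directly by exhibiting an explicit isometry lift of Paschke's unitary, then identifying the class of the resulting defect projection through a $K_0$-computation in $E_{nr+1}\otimes M_{(n)}$.

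First I would produce mutually orthogonal isometries $t_1,\ldots,t_{n+1}\in E_{nr+1}\otimes M_{(n)}$ with $\pi(t_i)=\varphi_r(S_i)$; this is a standard lifting of $n+1$ mutually orthogonal isometries across a quotient by a stable $\sigma$-unital ideal. Setting $T_i':=l_r(t_i)\in\mathcal{M}(\mathbb{K}\otimes M_{(n)})$ gives mutually orthogonal isometry lifts of $\tau_r(S_i)$, and the matrix
$$W_r := \begin{pmatrix} 0_1 & (T_1',\ldots,T_{n+1}') \\ w & O_{n+1} \end{pmatrix}\in\mathbb{M}_{n+2}(\mathcal{M}(\mathbb{K}\otimes M_{(n)}))$$
is an isometry lift of $V_{\tau_r}$: using $w^*w=1_1\oplus 0_{n+1}$, $ww^*=0_1\oplus 1_{n+1}$ and $(T_i')^*T_j'=\delta_{ij}$, direct computation yields $W_r^*W_r=1_{n+2}$ together with
$$1_{n+2}-W_rW_r^* \;=\; p_r\oplus 0_{n+1},\qquad p_r:=1-\sum_{i=1}^{n+1}T_i'(T_i')^*\in\mathbb{K}\otimes M_{(n)}.$$
The standard index formula then gives $\delta([V_{\tau_r}]_1)=-[p_r]_0\in K_0(\mathbb{K}\otimes M_{(n)})=K_0(M_{(n)})=\mathbb{Z}_{(n)}$.

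To evaluate $[p_r]_0$, I would pass to $K_0(E_{nr+1}\otimes M_{(n)})\cong\mathbb{Z}_{(n)}$. Since each $t_i$ is an isometry, $[t_it_i^*]_0=[1]_0$, so $[p_r]_0$ equals $-n[1]_0$ when viewed in $K_0(E_{nr+1}\otimes M_{(n)})$. On the other hand, from the Toeplitz relation $1=e+\sum_{i=1}^{nr+1}T_iT_i^*$ one obtains $[e]_0=-nr[1]_0$, so the inclusion-induced map $K_0(\mathbb{K}\otimes M_{(n)})\to K_0(E_{nr+1}\otimes M_{(n)})$ is multiplication by $-nr$. This map is injective on the torsion-free group $\mathbb{Z}_{(n)}$, and the equation $-nr[p_r]_0=-n$ forces $[p_r]_0=1/r$. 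Hence $\delta([V_{\tau_r}]_1)=-1/r$; taking $r=1$ (where the choice $t_i=T_i\otimes 1_{M_{(n)}}$ gives $p_1=e\otimes 1_{M_{(n)}}$, confirming $[p_1]_0=1$) yields $\delta([V_{\tau_1}]_1)=-1$, and multiplying the general formula by $r$ yields $r\delta([V_{\tau_r}]_1)=-1=\delta([V_{\tau_1}]_1)$.

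The main technical obstacle is producing the mutually orthogonal isometry lifts $t_i\in E_{nr+1}\otimes M_{(n)}$; this step is routine but requires some care, and can be handled either by a direct approximate-isometry lift followed by a polar-decomposition correction inside the stable ideal, or by invoking a semiprojectivity-type lifting property for the universal $\mathrm{C}^*$-algebra of $n+1$ mutually orthogonal isometries along the surjection $E_{nr+1}\otimes M_{(n)}\to\mathcal{O}_{nr+1}\otimes M_{(n)}$. Once the lifts are in hand, the remainder is the $K_0$ bookkeeping outlined above.
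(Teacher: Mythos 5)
Your $K_0$ bookkeeping is fine and, granted the lifts, would even give the sharper statement $\delta([V_{\tau_r}]_1)=-1/r$, from which the lemma follows. But the step you dismiss as routine --- producing isometries $t_1,\dots,t_{n+1}\in E_{nr+1}\otimes M_{(n)}$ with mutually orthogonal ranges and $\pi(t_i)=\varphi_r(S_i)$ --- is exactly the nontrivial point, and neither of your proposed justifications works. It is false in general that isometries lift to isometries across a quotient by a stable ideal: in $0\to\mathbb{K}\to B(H)\to \mathcal{Q}(\mathbb{K})\to 0$ the isometry $\pi(S^*)$ ($S$ the unilateral shift) has no isometric lift, since any lift is Fredholm of index $+1$ while an isometry has nonpositive index; the same index obstruction is what defeats the ``approximate lift plus polar decomposition'' scheme, which only produces a partial isometry whose defects need not be repairable. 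Semiprojectivity of $E_{n+1}$ is also not enough: it gives liftings along quotients by increasing unions of ideals, not along an arbitrary fixed surjection, and the relations ``isometries with orthogonal ranges'' are not projective (the Calkin example). In your situation the obstruction to the existence of such a lift is precisely the sign/positivity of the class $\delta([V_{\tau_r}]_1)$ you are trying to compute, so postulating the lift comes uncomfortably close to assuming the answer. A further symptom of the same issue appears at $r=1$: the choice $t_i=T_i\otimes 1_{M_{(n)}}$ lifts $S_i\otimes 1_{M_{(n)}}$, not $\varphi_1(S_i)$, and identifying the two already requires the Kirchberg--Phillips homotopy $\varphi_1\sim_h(\cdot\otimes 1_{M_{(n)}})$ together with homotopy invariance of $[V_\tau]_1$ --- an ingredient absent from your write-up, and note that $\varphi_r$ is only an abstract isomorphism, so there is nothing explicit to lift.

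This is exactly the difficulty the paper's proof is built to avoid: it never lifts $\varphi_r$ itself, but replaces $1_r\otimes\varphi_r$, up to homotopy (Kirchberg--Phillips plus homotopy invariance of the Paschke class), by the explicit homomorphism $\sigma_1$ defined from the canonical generators of $E_{nr+1}$ arranged in $\mathbb{M}_r$, whose lift is visible and has defect $e\oplus 0_{r-1}$; this computes $r\delta([V_{\tau_r}]_1)$ directly. Your argument can be repaired in the same spirit without amplification: fix a unital embedding $\mathbb{M}_r\subset M_{(n)}$ (possible since $\operatorname{GCD}(n,r)=1$), take those explicit isometries inside $E_{nr+1}\otimes\mathbb{M}_r\subset E_{nr+1}\otimes M_{(n)}$, note that the induced unital homomorphism $\mathcal{O}_{n+1}\to\mathcal{O}_{nr+1}\otimes M_{(n)}$ is homotopic to $\varphi_r$ (all unital homomorphisms between these algebras share the same $KK$-class and $K_1=0$), and then run your defect computation, which yields a projection of class $1/r$ and hence $\delta([V_{\tau_r}]_1)=-1/r$. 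As written, however, the lifting claim is a genuine gap.
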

\begin{proof}
For $\tau_r^{\oplus r} :=1_r \otimes\tau_r : \mathcal{O}_{n+1}\to\mathbb{M}_r \otimes\mathcal{Q}(\mathbb{K}\otimes M_{(n)})$,
one has $1_r \otimes\tau_r=({\rm id}_{\mathbb{M}_r}\otimes i_r)\circ(1_r\otimes \varphi_r)$ by definition,
and we denote $\sigma_0 :=1_r\otimes \varphi_r : \mathcal{O}_{n+1}\to \mathbb{M}_r\otimes\mathcal{O}_{nr+1}\otimes M_{(n)}$.

Let $T^{(k)}_1, \cdots , T^{(k)}_r, (k=1, \cdots, n), T_{nr+1}$ be the canonical $nr+1$ generators of $E_{nr+1}$.
We define the following $n+1$ isometries with mutually orthogonal ranges :
$$\mathcal{T}_k :=\left(
\begin{array}{ccc}
T^{(k)}_1&\cdots&T^{(k)}_r\\
&\text{\rm {\huge{O}}}&\\
\end{array}\right)\in \mathbb{M}_r(E_{nr+1}),\quad (k=1,\cdots, n),\quad\mathcal{T}_{n+1} :=T_{nr+1}\oplus 1_{r-1}.$$
Direct computation yields $\sum_{i=1}^{n+1}\mathcal{T}_i\mathcal{T}_i^*=(1-e)\oplus 1_{r-1}$,
and we have a unital $*$-homomorphism $$\sigma_1 : \mathcal{O}_{n+1}\ni S_i\mapsto \pi(\mathcal{T}_i)\otimes 1_{M_{(n)}}\in\mathbb{M}_r(\mathcal{O}_{nr+1}\otimes M_{(n)}).$$
By the Kirchberg--Phillips Theorem, one has $\sigma_1\sim_h\sigma_0$ in $\operatorname{Hom}(\mathcal{O}_{n+1}, \mathbb{M}_r(\mathcal{O}_{nr+1}\otimes M_{(n)}))_u$.
Therefore we have $\tau_r^{\oplus r}=({\rm id}_{\mathbb{M}_r}\otimes i_r)\circ\sigma_0\sim_h({\rm id}_{\mathbb{M}_r}\otimes i_r)\circ\sigma_1$ and $[V_{\tau_r^{\oplus r}}]_1=[V_{({\rm id}_{\mathbb{M}_r}\otimes i_r)\circ\sigma_1}]_1$.
The following unitary is a lift of $V_{({\rm id}_{\mathbb{M}_r}\otimes i_r)\circ\sigma_1}\oplus V^*_{({\rm id}_{\mathbb{M}_r}\otimes i_r)\circ\sigma_1}$
$$\left(
\begin{array}{cccc}
0_1&\mathcal{T}&(e\otimes 1_{M_{(n)}})\oplus 0_{r-1}& 0\\
1_r\otimes w&{\rm O}_{n+1}&0&{\rm O}_{n+1}\\
& &0_1&1_r\otimes w^*\\
&\text{\rm {\huge{O}}}_{n+2}&\mathcal{T}^*&{\rm O}_{n+1}
\end{array}
\right)\in \mathbb{M}_{2n+4}(\mathbb{M}_r\otimes E_{nr+1}\otimes M_{(n)}),$$
where we denote $\mathcal{T} :=(\mathcal{T}_1\otimes 1_{M_{(n)}},\cdots, \mathcal{T}_{n+1}\otimes 1_{M_{(n)}})$ for simplicity.
By the definition of the index map, we have $r\delta([V_{\tau_r}]_1)=\delta([V_{({\rm id}_{\mathbb{M}_r}\otimes i_r)\circ\sigma_1}]_1)=-[e\otimes 1_{M_{(n)}}]_0=-1\in K_0(M_{(n)})$.
\end{proof}
\begin{proof}[{Proof of Proposition \ref{isom}}]
Direct computation yields
\begin{align*}
-n\delta([v]_1)=&\delta([V_{\operatorname{Ad}v\circ\tau_r}]_1)-\delta([V_{\tau_r}]_1)\\
=&\delta([V_{\tilde{\alpha_{\frac{1}{r}}}\circ\tau_1}]_1)-\delta([V_{\tau_r}]_1)\\
=&\delta\circ K_1(\tilde{\alpha_{\frac{1}{r}}})([V_{\tau_1}]_1)-\delta([V_{\tau_r}]_1)\\
=&\frac{1}{r}\delta([V_{\tau_1}]_1)-\delta([V_{\tau_r}]_1)=0.
\end{align*}
So we have a unitary $V$ with $\pi(V)=v$,
and the map $\beta_{\frac{1}{r}}:=\operatorname{Ad}V^*\circ\alpha_{\frac{1}{r}}$ is the isomorphism.
\end{proof}
\begin{proof}[{Proof of Theorem \ref{r1}}]
Every arrow in the following diagram is a group homomorphism :
$$\xymatrix{
\operatorname{Aut}(\mathcal{O}_{n+1})\ar@{=}[d]\ar[r]^{\operatorname{Ad}\varphi_r\quad}&\operatorname{Aut}(\mathcal{O}_{nr+1}\otimes M_{(n)})&\operatorname{Aut}(E_{nr+1}\otimes M_{(n)})\ar[l]\ar[r]^{\eta_{nr}}&\operatorname{Aut}(\mathbb{K}\otimes M_{(n)})\\
\operatorname{Aut}(\mathcal{O}_{n+1})\ar[r]^{\operatorname{Ad}\varphi_1\quad}&\operatorname{Aut}(\mathcal{O}_{n+1}\otimes M_{(n)})\ar[u]^{\operatorname{Ad}\tilde{\beta_{\frac{1}{r}}}}&\operatorname{Aut}(E_{n+1}\otimes M_{(n)})\ar[l]\ar[u]^{\operatorname{Ad}\beta_{\frac{1}{r}}}\ar[r]^{\eta_n}&\operatorname{Aut}(\mathbb{K}\otimes M_{(n)}).\ar[u]^{\operatorname{Ad}\beta_{\frac{1}{r}}}
}$$
Since $K_1(\mathcal{O}_{nr+1}\otimes M_{(n)})=0$,
the Kirchberg--Phillips theorem gives a path of unitaries $\{u_t\}_{t\in [0, 1)}\subset \mathcal{O}_{nr+1}\otimes M_{(n)}$ satisfying $u_0=1$ and that $\operatorname{Ad}u_t\circ\varphi_r$ converges to $\tilde{\beta_{\frac{1}{r}}}\circ\varphi_1$ as $t$ tends to $1$.
So the left hand square of the above diagram commutes upto homotopy and the homotopy is given by a path of group homomorphisms.
In particular, the following diagram commutes for a finite CW complex $X$, where $\operatorname{B}(\operatorname{Ad}\beta_{\frac{1}{r}})_*={\rm id}_{E^1_{M_{(n)}}(X)}$ because $\beta_{\frac{1}{r}}\in\operatorname{Aut}(\mathbb{K}\otimes M_{(n)})$ :
$$
\scriptsize\xymatrix{
[X, \operatorname{BAut}(\mathcal{O}_{n+1})]\ar@{=}[d]\ar[r]^{\operatorname{B}(\operatorname{Ad}\varphi_r)_*\quad}&[X, \operatorname{BAut}(\mathcal{O}_{nr+1}\otimes M_{(n)})]&[X, \operatorname{BAut}(E_{nr+1}\otimes M_{(n)})]\ar@{=}[l]^{Cor \ref{mt3}}\ar[r]^{\operatorname{B}(\eta_r)_*}&[X, \operatorname{BAut}(\mathbb{K}\otimes M_{(n)})]\\
[X, \operatorname{BAut}(\mathcal{O}_{n+1})]\ar[r]^{\operatorname{B}(\operatorname{Ad}\varphi_1)_*\quad}&[X, \operatorname{BAut}(\mathcal{O}_{n+1}\otimes M_{(n)})]\ar[u]^{\operatorname{B}(\operatorname{Ad}\tilde{\beta_{\frac{1}{r}}})_*}&[X, \operatorname{BAut}(E_{n+1}\otimes M_{(n)})]\ar@{=}[l]^{Cor \ref{mt3}}\ar[u]^{\operatorname{B}(\operatorname{Ad}\beta_{\frac{1}{r}})_*}\ar[r]^{\operatorname{B}(\eta_1)_*}&[X, \operatorname{BAut}(\mathbb{K}\otimes M_{(n)})].\ar[u]^{\operatorname{B}(\operatorname{Ad}\beta_{\frac{1}{r}})_*}
}$$
\end{proof}
\begin{thm}
Two maps $\operatorname{B}(\operatorname{Ad}\varphi_1)_*$ and $\operatorname{B}(\otimes {\rm id}_{M_{(n)}})_*$ are equal,
and we have $b_r=\mathfrak{b}_{{(n)}}$.
\end{thm}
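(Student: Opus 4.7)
The second assertion $b_r = \mathfrak{b}_{(n)}$ is an immediate consequence of the first combined with Theorem \ref{r1}: the latter yields $b_r = b_1$, and from the definitions $b_1$ and $\mathfrak{b}_{(n)} = \mathfrak{b}_{M_{(n)}}$ differ only in that the former uses $\operatorname{B}(\operatorname{Ad}\varphi_1)_*$ where the latter uses $\operatorname{B}(\otimes{\rm id}_{M_{(n)}})_*$. The substantive task is therefore the equality $\operatorname{B}(\operatorname{Ad}\varphi_1)_* = \operatorname{B}(\otimes{\rm id}_{M_{(n)}})_*$. My plan is to build a continuous path of group homomorphisms joining the two group homomorphisms $\operatorname{Ad}\varphi_1$ and $\otimes{\rm id}_{M_{(n)}}$ from $\operatorname{Aut}(\mathcal{O}_{n+1})$ to $\operatorname{Aut}(\mathcal{O}_{n+1}\otimes M_{(n)})$; functoriality of the classifying space construction on topological groups then produces the equality on $[X,-]$. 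Precomposing with the inverse isomorphism $\operatorname{Ad}\varphi_1^{-1}$, this reduces to showing that the group endomorphism $\Theta(\alpha) := \varphi_1^{-1}\circ(\alpha\otimes{\rm id}_{M_{(n)}})\circ\varphi_1$ of $\operatorname{Aut}(\mathcal{O}_{n+1})$ is homotopic to the identity through a continuous family of group endomorphisms.

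The key input is Kirchberg--Phillips asymptotic unitary equivalence. Both $\varphi_1$ and the unital embedding $\otimes 1_{M_{(n)}}\colon\mathcal{O}_{n+1}\to\mathcal{O}_{n+1}\otimes M_{(n)}$ are unital $*$-homomorphisms inducing the identity on $K_0\cong\mathbb{Z}_n$, so by the UCT they coincide in $KK(\mathcal{O}_{n+1},\mathcal{O}_{n+1}\otimes M_{(n)})$. Since $\mathcal{O}_{n+1}\otimes M_{(n)}$ is a unital Kirchberg algebra, there is a continuous path $\{u_t\}_{t\in[0,1)}\subset U(\mathcal{O}_{n+1}\otimes M_{(n)})$ with $u_0 = 1$ and $\lim_{t\to 1}\|\operatorname{Ad}u_t(\varphi_1(x)) - x\otimes 1_{M_{(n)}}\| = 0$ for every $x\in\mathcal{O}_{n+1}$. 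Setting $\pi_t := \operatorname{Ad}u_t\circ\varphi_1$, a unital $*$-isomorphism for each $t\in[0,1)$, I define
$$\Theta_t(\alpha) := \pi_t^{-1}\circ(\alpha\otimes{\rm id}_{M_{(n)}})\circ\pi_t \;\;\;(t\in[0,1)),\qquad \Theta_1(\alpha) := \alpha.$$
Each $\Theta_t$ is a group endomorphism of $\operatorname{Aut}(\mathcal{O}_{n+1})$, with $\Theta_0 = \Theta$. Using that $\pi_t^{-1}$ is isometric and inserting $\pm(\alpha(x)\otimes 1)$, one obtains
$$\|\Theta_t(\alpha)(x) - \alpha(x)\| = \|(\alpha\otimes{\rm id}_{M_{(n)}})\pi_t(x) - \pi_t(\alpha(x))\| \leq \|\pi_t(x) - x\otimes 1\| + \|\pi_t(\alpha(x)) - \alpha(x)\otimes 1\|,$$
and both right-hand terms go to $0$ as $t\to 1$ uniformly on compact subsets of $\mathcal{O}_{n+1}$ (by separability and a finite-net argument from the pointwise convergence). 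This provides the required joint continuity of $(t,\alpha)\mapsto\Theta_t(\alpha)$ at $t=1$ in the point--norm topology.

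The main obstacle is precisely this continuity verification at $t = 1$. Because $\pi_t$ tends to the non-invertible embedding $\otimes 1_{M_{(n)}}$, the inverses $\pi_t^{-1}$ do not themselves converge, so one cannot pass to the limit within $\pi_t^{-1}$; only the full composition $\pi_t^{-1}\circ(\alpha\otimes{\rm id}_{M_{(n)}})\circ\pi_t$ stabilizes. The displayed estimate, together with the isometric property of $*$-isomorphisms, is what makes this rigorous.
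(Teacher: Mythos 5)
Your proof is correct, and its engine is the same as the paper's: a point--norm continuous path of group homomorphisms implemented by a continuous path of unitaries, which upon applying $\operatorname{B}$ gives homotopic maps of classifying spaces; the reduction of $b_r=\mathfrak{b}_{(n)}$ to Theorem \ref{r1} plus the equality of the first arrows is exactly right. Where you differ is in how the unitary path is produced and where the homotopy lives. The paper identifies $\mathcal{O}_{n+1}\cong\mathcal{O}_{n+1}\otimes M_{(n)}$ so that $\varphi_1$ becomes ${\rm id}_{\mathcal{O}_{n+1}}\otimes\phi$ with $\phi:M_{(n)}\to M_{(n)}^{\otimes 2}$ the strongly self-absorbing isomorphism, and then uses only Theorem \ref{ssa}(3) (the Dadarlat--Winter path of unitaries $v_t\in U(M_{(n)}^{\otimes 2})$ intertwining $\phi^{-1}$ with the first-factor embedding) to define $\Psi_t(\alpha)=\operatorname{Ad}(1\otimes v_t^*)\circ(\alpha\otimes{\rm id}_{M_{(n)}})\circ\operatorname{Ad}(1\otimes v_t)$, a homotopy of group homomorphisms landing directly in $\operatorname{Aut}(\mathcal{O}_{n+1}\otimes M_{(n)}^{\otimes 2})$. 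You instead compare $\varphi_1$ and $\otimes 1_{M_{(n)}}$ as homomorphisms $\mathcal{O}_{n+1}\to\mathcal{O}_{n+1}\otimes M_{(n)}$ via the UCT computation $KK(\mathcal{O}_{n+1},\mathcal{O}_{n+1}\otimes M_{(n)})\cong\operatorname{Hom}(\mathbb{Z}_n,\mathbb{Z}_n)$ and invoke the Kirchberg--Phillips asymptotic unitary equivalence to get $u_t$, then run the conjugation trick inside $\operatorname{End}$ of $\operatorname{Aut}(\mathcal{O}_{n+1})$ and post-compose with $\operatorname{B}(\operatorname{Ad}\varphi_1)$. Your route uses a heavier classification input but one the paper itself uses in the proof of Theorem \ref{r1} (including the normalization $u_0=1$, available since $K_1(\mathcal{O}_{n+1}\otimes M_{(n)})=0$ makes the unitary group connected), while the paper's route stays within the strongly self-absorbing machinery and avoids any $KK$ comparison of $\varphi_1$ with the unital inclusion; your explicit isometry estimate and equicontinuity argument for joint continuity at $t=1$ is a point the paper leaves implicit, so your write-up is, if anything, more detailed on that step.
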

\begin{proof}
For the isomorphism $\phi : M_{(n)}\to M_{(n)}^{\otimes 2}, l=\phi^{-1}$,
we have a path of unitaries $\{v_t\}_{t\in [0, 1)}\subset U(M_{(n)}^{\otimes 2})$ satisfying $v_0=1$ and $\lim_{t\to 1}||\operatorname{Ad}v_t(d)-l(d)\otimes 1_{M_{(n)}}||=0$ for every $d\in M_{(n)}^{\otimes 2}$.
Identifying $\mathcal{O}_{n+1}$ (resp. $\mathcal{O}_{n+1}\otimes M_{(n)}$) with $\mathcal{O}_{n+1}\otimes M_{(n)}$ (resp. $(\mathcal{O}_{M_{(n)}}\otimes M_{(n)})\otimes M_{(n)}$),
we identify $\varphi_1$ with ${\rm id}_{\mathcal{O}_{n+1}}\otimes \phi$ and define the following automorphism for $\alpha\in \operatorname{Aut}(\mathcal{O}_{n+1}\otimes M_{(n)})$
$$\Psi_t(\alpha) :=\operatorname{Ad}(1_{\mathcal{O}_{n+1}}\otimes v^*_t)\circ (\alpha\otimes {\rm id}_{M_{(n)}})\circ\operatorname{Ad}(1_{\mathcal{O}_{n+1}}\otimes v_t)\in \operatorname{Aut}((\mathcal{O}_{n+1}\otimes M_{(n)})\otimes M_{(n)}).$$
The map $\Psi_t$ is a group homomorphism for every $t\in [0,1)$ and $\Psi_t(\alpha)$ converges to $({\rm id}_{\mathcal{O}_{n+1}}\otimes \phi)\circ \alpha\circ({\rm id}_{\mathcal{O}_{n+1}}\otimes \phi)^{-1}$ as $t$ tends to $1$ in the point norm topology.
So $\Psi_t$ gives the homotopy of group homomorphisms,
and one has $\operatorname{B}(\operatorname{Ad}\varphi_1)_*=\operatorname{B}(\otimes {\rm id}_{M_{(n)}})_*$.
\end{proof}


\subsection{The inverse image $\mathfrak{b}_{{(n)}}^{-1}(0)$}
In this section we assume that $X$ is a finite connected CW complex.
Let us recall the continuous fields of $\mathcal{O}_{n+1}$ coming from vector bundles (see \cite{D1, r2}).
For a compact Hausdorff space $X$, we denote by ${\rm Vect}_m\;(X)$ the set of the vector bundles of rank $m$. 
M. Dadarlat investigated continuous fields of $\mathcal{O}_{n+1}$ over $X$ arising 
from $E\in {\rm Vect}_{n+1}(X)$, which are Cuntz--Pimsner algebras. 
We refer to \cite{KT} and \cite{Pim} for Cuntz--Pimsner algebras. 
Fixing a Hermitian structure of $E$, we get a Hilbert $C(X)$-module from $E$, which we regard as a $C(X)$-$C(X)$-bimodule.  
Then the Pimsner construction gives the Cuntz--Pimsner algebra $\mathcal{O}_E$ and the exact sequence $0\to \mathcal{K}_E\xrightarrow{j_E}\mathcal{T}_E\to \mathcal{O}_E\to 0.$ 
Since $\mathcal{K}_E$ has a projection whose range is the 1-dimensional subspace of the vacuum vector, the Dixmier--Douady theory yields $\mathcal{K}_E\cong C(X)\otimes \mathbb{K}$.
The algebra $\mathcal{O}_E$ (resp. $\mathcal{T}_E$) is a continuous field of $\mathcal{O}_{n+1}$ (resp. $E_{n+1}$) over $X$ with the natural unital inclusion $\theta_E : C(X)\to \mathcal{O}_E$.

\begin{thm}[{\cite[Theorem 4.8]{Pim}}]\label{op}
The exact sequence $0\to \mathcal{K}_E\xrightarrow{j_E}\mathcal{T}_E\to \mathcal{O}_E\to 0$ gives the exact sequence of $K_0$-groups $\colon$
$$K^0(X)\xrightarrow{1-[E]}K^0(X)\xrightarrow{K_0(\theta_{E})}K_0(\mathcal{O}_E).$$
\end{thm}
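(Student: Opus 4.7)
The plan is to extract the claimed three-term exact sequence from the $K$-theory six-term exact sequence associated with $0\to\mathcal{K}_E\xrightarrow{j_E}\mathcal{T}_E\to\mathcal{O}_E\to 0$, after identifying the relevant $K_0$-groups and computing $K_0(j_E)$ on $K_0(\mathcal{K}_E)$.

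First I would identify $K_0(\mathcal{K}_E)$ and $K_0(\mathcal{T}_E)$ with $K^0(X)$. The rank-one projection $p_A\in\mathcal{K}_E$ onto the vacuum summand $A=C(X)\subset\mathcal{F}(E)=\bigoplus_{k\geq 0}E^{\otimes k}$ is full, and the Dixmier--Douady-type isomorphism $\mathcal{K}_E\cong C(X)\otimes\mathbb{K}$ recalled in the excerpt gives $K_0(\mathcal{K}_E)\cong K^0(X)$ with $[p_A]_0\leftrightarrow 1$. For $\mathcal{T}_E$, the substantive input from Pimsner is that the unital inclusion $\theta_E\colon C(X)\hookrightarrow\mathcal{T}_E$ is a $KK$-equivalence (proved using an explicit Fredholm module on $\mathcal{F}(E)$ built from the number operator); hence $K_0(\theta_E)\colon K^0(X)\to K_0(\mathcal{T}_E)$ is an isomorphism sending $1$ to $[1_{\mathcal{T}_E}]_0$.

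Second I would compute $K_0(j_E)$ under these identifications. Fix a finite Parseval frame $\{\xi_i\}_{i=1}^{N}$ of the Hermitian $C(X)$-module $E$, so that the matrix $(\langle\xi_i,\xi_j\rangle)_{i,j}\in\mathbb{M}_N(C(X))$ is a projection whose class in $K^0(X)$ equals $[E]$. Let $T_{\xi_i}\in\mathcal{T}_E$ be the associated creation operators on $\mathcal{F}(E)$. A direct calculation using the Parseval relation yields
$$\sum_{i=1}^{N}T_{\xi_i}T_{\xi_i}^{*}=1_{\mathcal{T}_E}-p_A,$$
since the left-hand side is the projection of $\mathcal{F}(E)$ onto $\bigoplus_{k\geq 1}E^{\otimes k}$. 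Viewing $V=(T_{\xi_1},\dots,T_{\xi_N})$ as a row in $\mathbb{M}_{1,N}(\mathcal{T}_E)$, we have $VV^{*}=\sum_i T_{\xi_i}T_{\xi_i}^{*}$ and $V^{*}V=\bigl(\theta_E(\langle\xi_i,\xi_j\rangle)\bigr)_{i,j}$. Murray--von Neumann equivalence gives $[VV^{*}]_0=[V^{*}V]_0=K_0(\theta_E)([E])$, which under the identification $K_0(\mathcal{T}_E)\cong K^0(X)$ is simply $[E]$. Therefore
$$K_0(j_E)([p_A]_0)=[1_{\mathcal{T}_E}]_0-[VV^{*}]_0=1-[E]\in K^0(X).$$

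Third, because $\mathcal{K}_E$ and $\mathcal{T}_E$ are $C(X)$-algebras and $j_E$ is $C(X)$-linear, $K_0(j_E)$ is a $K^0(X)$-module map; combined with the previous step this shows $K_0(j_E)$ is multiplication by $1-[E]$. Exactness of the six-term sequence at $K_0(\mathcal{T}_E)$ then yields the claimed exact sequence, with the quotient map inducing $K_0(\theta_E)$ precisely because $\theta_E$ on $\mathcal{O}_E$ factors the unital inclusion of $C(X)$ into $\mathcal{T}_E$ followed by the projection.

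The main obstacle is the $KK$-equivalence $\theta_E\colon C(X)\to\mathcal{T}_E$, which is the heart of Pimsner's theorem and requires constructing an explicit $KK$-inverse from the Fock-space dynamics; once this is in place, the frame computation that turns $[p_A]_0$ into $1-[E]$ is essentially a formal consequence of the Cuntz--Toeplitz relations on $\mathcal{F}(E)$.
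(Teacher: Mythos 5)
Your proposal is correct: the paper does not prove this statement but simply quotes it from Pimsner, and your argument is a faithful reconstruction of that proof specialized to vector bundles — the two inputs being the $KK$-equivalence $C(X)\to\mathcal{T}_E$ and the identification of $K_0(j_E)$ with multiplication by $1-[E]$, which your Parseval-frame computation ($\sum_i T_{\xi_i}T_{\xi_i}^*=1-p_A$, $[V^*V]_0=K_0(\theta_E)([E])$) establishes correctly. So this is essentially the same approach as the cited source, and no gap needs to be flagged.
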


M. Dadarlat found an invariant to classify the $C(X)$-linear isomorphism classes of $\mathcal{O}_E$.
\begin{thm}[{\cite[Theorem 1.1]{D1}}]\label{dada}
Let $X$ be a compact metrizable space, and let $E$ and $F$ be vector bundles of rank $\geq 2$ over $X$. Then there is a unital $*$-homomorphism $\varphi : \mathcal{O}_E\to \mathcal{O}_F$ with $\varphi\circ\theta_E=\theta_F$ if and only if $(1-[E])\cdot K^0(X)\subset(1-[F])\cdot K^0(X)$. Moreover we can take $\varphi$ to be an isomorphism if and only if $(1-[E])\cdot K^0(X)=(1-[F])\cdot K^0(X)$.
\end{thm}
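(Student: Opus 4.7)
For necessity, apply the $K_0$-functor to the identity $\varphi\circ\theta_E=\theta_F$ to obtain $K_0(\theta_F)=K_0(\varphi)\circ K_0(\theta_E)$, so $\ker K_0(\theta_E)\subset \ker K_0(\theta_F)$. By Theorem~\ref{op} these kernels are precisely $(1-[E])\cdot K^0(X)$ and $(1-[F])\cdot K^0(X)$, which gives the stated inclusion. This direction is essentially formal once one has Pimsner's exact sequence.

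For sufficiency, my plan is to invoke a $C(X)$-equivariant form of the Kirchberg--Phillips classification. The Cuntz--Pimsner algebras $\mathcal{O}_E$ and $\mathcal{O}_F$ are unital separable nuclear $C(X)$-algebras whose fibers are Cuntz algebras, hence Kirchberg algebras in the UCT class, and Kirchberg's classification identifies homotopy classes of unital $C(X)$-linear $*$-homomorphisms $\mathcal{O}_E\to\mathcal{O}_F$ with suitable classes in $KK^X(\mathcal{O}_E,\mathcal{O}_F)$. The hypothesis $(1-[E])K^0(X)\subset(1-[F])K^0(X)$ provides a well-defined quotient morphism $K^0(X)/(1-[E])K^0(X)\to K^0(X)/(1-[F])K^0(X)$, which is precisely the candidate map $\operatorname{Im}K_0(\theta_E)\to\operatorname{Im}K_0(\theta_F)$ forced by $\varphi\circ\theta_E=\theta_F$. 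Combining this with the functorial data coming from the full Pimsner six-term exact sequence on the $K_1$-side, one assembles a $C(X)$-linear $K$-theoretic morphism $K_*(\mathcal{O}_E)\to K_*(\mathcal{O}_F)$; the UCT lifts it to $KK^X(\mathcal{O}_E,\mathcal{O}_F)$, and Kirchberg--Phillips realizes the lift as an honest unital $C(X)$-linear $*$-homomorphism $\varphi$.

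For the isomorphism clause, the equality $(1-[E])K^0(X)=(1-[F])K^0(X)$ lets one run the sufficiency construction symmetrically, producing both $\varphi:\mathcal{O}_E\to\mathcal{O}_F$ and $\psi:\mathcal{O}_F\to\mathcal{O}_E$ intertwining the $\theta$-maps. A direct $KK^X$-computation identifies the classes of $\psi\circ\varphi$ and $\varphi\circ\psi$ with those of the identities, after which Phillips's approximate intertwining promotes $\varphi$ to a genuine $C(X)$-linear isomorphism.

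The main obstacle I expect is executing the second step of sufficiency: verifying that the quotient-level $K$-theory datum actually lifts to a $KK^X$-class compatible with $\theta_E,\theta_F$, and that this class is realized by a \emph{unital} $C(X)$-linear $*$-homomorphism. This requires the full strength of equivariant Kirchberg classification and is delicate in particular because $E$ and $F$ may have different ranks, so the fibers $\mathcal{O}_{\mathrm{rank}(E)}$ and $\mathcal{O}_{\mathrm{rank}(F)}$ are non-isomorphic and no fiberwise argument is available; the construction must genuinely take place over $X$.
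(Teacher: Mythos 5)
First, note that the paper does not prove Theorem \ref{dada} at all: it is quoted verbatim from Dadarlat \cite[Theorem 1.1]{D1}, so there is no internal proof to compare against; the closest internal material is Proposition \ref{Ker}, where the paper redeploys Dadarlat's argument in a stabilized setting. Your necessity direction is correct and is the standard one: $\varphi\circ\theta_E=\theta_F$ gives $\ker K_0(\theta_E)\subset\ker K_0(\theta_F)$, and by the exactness in Theorem \ref{op} these kernels are $(1-[E])K^0(X)$ and $(1-[F])K^0(X)$.

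The genuine gap is in your sufficiency and isomorphism steps. You appeal to ``a $C(X)$-equivariant form of the Kirchberg--Phillips classification'' identifying homotopy classes of unital $C(X)$-linear $*$-homomorphisms $\mathcal{O}_E\to\mathcal{O}_F$ with classes in $KK_X(\mathcal{O}_E,\mathcal{O}_F)$ over an arbitrary compact metrizable $X$; no such off-the-shelf theorem exists in that generality. The available lifting results (Kirchberg's work on non-simple algebras, and \cite[Theorem 1.1, Theorem 2.7]{D3} used in this paper) require additional structure: $\mathcal{O}_\infty$-stable fields with Kirchberg fibers, control of dimension or semiprojectivity issues, and a $KK_X$-class that is already compatible with the units and with $\theta_E,\theta_F$. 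Producing that compatible $KK_X$-datum from the bare inclusion $(1-[E])K^0(X)\subset(1-[F])K^0(X)$ is precisely the nontrivial content, and your ``functorial data coming from the six-term exact sequence'' does not by itself deliver it. Dadarlat's actual route, mirrored in Proposition \ref{Ker} here, is to work with the Toeplitz extensions $0\to\mathcal{K}_E\to\mathcal{T}_E\to\mathcal{O}_E\to 0$, exploit that $\mathcal{K}_E\cong C(X)\otimes\mathbb{K}$ and that $\theta_E:C(X)\to\mathcal{T}_E$ is a $KK_X$-equivalence, compare the two extensions through mapping cones (Lemma \ref{mn}) to build a $KK_X$-morphism of suspensions respecting the units, tensor with $\mathcal{O}_\infty$ (resp. $M_{(n)}\otimes\mathcal{O}_\infty$) so that the lifting theorems apply, and only then lift to a unital $C(X)$-linear homomorphism or isomorphism. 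Your sketch names the right philosophy but leaves exactly these steps---construction of the compatible $KK_X$-class, the stabilization needed before any lifting theorem applies, and an existence (not just uniqueness) statement for unital $C(X)$-homomorphisms---unproved, so as written it is a plan rather than a proof.
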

We show the following theorem.
\begin{thm}\label{mt4}
For a finite connected CW complex $X$,
the inverse image $\mathfrak{b}_{{(n)}}^{-1}(0)$ consists of the $C(X)$-linear isomorphism classes of the continuous fields of the form $\mathcal{O}_E\otimes M_{(n)}$ for $E\in \operatorname{Vect}_{nr+1}(X)$ with $r\geq 1, \operatorname{GCD}(n, r)=1$.
\end{thm}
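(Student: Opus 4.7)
The plan is to prove the two inclusions separately, using Theorem \ref{r1} to compute $\mathfrak{b}_{(n)}$ as $b_r$ for any convenient $r\geq 1$ coprime to $n$.

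For the inclusion $\supseteq$, given $E\in\operatorname{Vect}_{nr+1}(X)$ with $\gcd(n,r)=1$, the Pimsner exact sequence $0\to \mathcal{K}_E\otimes M_{(n)}\to \mathcal{T}_E\otimes M_{(n)}\to \mathcal{O}_E\otimes M_{(n)}\to 0$ realizes $\mathcal{T}_E\otimes M_{(n)}$ as a locally trivial continuous field of $E_{nr+1}\otimes M_{(n)}$ lifting $\mathcal{O}_E\otimes M_{(n)}$. Its ideal field $\mathcal{K}_E\otimes M_{(n)}\cong C(X)\otimes\mathbb{K}\otimes M_{(n)}$ is $C(X)$-linearly trivial because $\mathcal{K}_E$ carries the vacuum rank-one projection and thus has vanishing Dixmier--Douady class. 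Since the map $b_r$ assigns to $[\mathcal{O}_E\otimes M_{(n)}]$ precisely the class of this ideal field in $[X,\operatorname{BAut}_0(\mathbb{K}\otimes M_{(n)})]$, Theorem \ref{r1} yields $\mathfrak{b}_{(n)}([\mathcal{O}_E\otimes M_{(n)}])=0$.

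For the inclusion $\subseteq$, suppose $\mathfrak{b}_{(n)}([A])=0$. By Corollary \ref{mt3} the $\mathcal{O}_{n+1}\otimes M_{(n)}$-field $A\otimes M_{(n)}$ lifts to a continuous field $\mathcal{B}$ of $E_{n+1}\otimes M_{(n)}$, and the vanishing hypothesis trivializes the canonical ideal subfield as $C(X)\otimes\mathbb{K}\otimes M_{(n)}\subset\mathcal{B}$. The first-level submodule $F\subset C(X)\otimes\mathbb{K}\otimes M_{(n)}$, defined as the closed $C(X)\otimes M_{(n)}$-span of the fiberwise images of the local generator isometries $T_i\cdot(e\otimes 1_{M_{(n)}})$, is a finitely generated projective Hilbert $C(X)\otimes M_{(n)}$-bimodule of rank $n+1$, because $\operatorname{Aut}(E_{n+1}\otimes M_{(n)})$ preserves the $(n{+}1)$-dimensional generator subspace of the Toeplitz algebra. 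By the universality of Pimsner's construction, $\mathcal{B}\cong \mathcal{T}_F$ and $A\otimes M_{(n)}\cong \mathcal{O}_F$ as $C(X)\otimes M_{(n)}$-algebras. The class $[F]\in K^0(X)\otimes\mathbb{Z}_{(n)}$ has rank $n+1$, so by clearing denominators in $\mathbb{Z}_{(n)}$ and stabilizing vector bundles over the finite CW complex $X$, one finds $E\in\operatorname{Vect}_{nr+1}(X)$ with $\gcd(n,r)=1$ and $[E]\otimes 1=[F]$. Then the Pimsner ideals $(1-[F])K^0(X)\otimes\mathbb{Z}_{(n)}$ and $(1-[E]\otimes 1)K^0(X)\otimes\mathbb{Z}_{(n)}$ coincide, so Dadarlat's classification (Theorem \ref{dada}) in the $M_{(n)}$-tensored setting gives a $C(X)$-linear isomorphism $A\otimes M_{(n)}\cong \mathcal{O}_E\otimes M_{(n)}$; identifying $\mathcal{O}_{n+1}$ with $\mathcal{O}_{nr+1}\otimes M_{(n)}$ via $\varphi_r$ then yields the claimed description of $[A]$.

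The main obstacle lies in the forward direction, in two interlocking issues: (i) making rigorous the extraction of the globally defined first-level bimodule $F$ from the lift $\mathcal{B}$ and justifying $\mathcal{B}\cong\mathcal{T}_F$, which requires a careful analysis of how the structure group $\operatorname{Aut}(E_{n+1}\otimes M_{(n)})$ acts on the $(n+1)$-dimensional generator subspace fiberwise; and (ii) realizing the localized $K$-theory class $[F]\in K^0(X)\otimes\mathbb{Z}_{(n)}$ by an honest rank-$(nr+1)$ vector bundle and upgrading the resulting $K$-theoretic match into the desired $C(X)$-linear isomorphism of Pimsner quotients via Dadarlat's Theorem \ref{dada}, taking full advantage of the freedom in $r$ provided by Theorem \ref{r1}.
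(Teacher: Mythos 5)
Your $\supseteq$ direction is fine and is essentially the paper's: $\mathcal{T}_E\otimes M_{(n)}$ is a locally trivial lift of $\mathcal{O}_E\otimes M_{(n)}$ whose ideal $\mathcal{K}_E\otimes M_{(n)}$ is trivialized by the vacuum projection, so $b_r$ vanishes on these classes, and Theorem \ref{r1} (together with $b_r=\mathfrak{b}_{(n)}$ and the bijectivity in Corollary \ref{mt3}) gives $\mathfrak{b}_{(n)}=0$. The forward inclusion, however, contains a genuine gap at exactly the step you flag. The assertion that $\operatorname{Aut}(E_{n+1}\otimes M_{(n)})$ preserves the $(n+1)$-dimensional generator subspace is false: a typical automorphism sends $T_i\otimes 1_{M_{(n)}}$ to an isometry with no reason to lie in $\operatorname{span}\{T_j\}\otimes M_{(n)}$ (already for $E_{n+1}$ itself, endomorphisms correspond to unitaries via $\beta\mapsto\sum\beta(T_i)T_i^*$ and move the first level around). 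Consequently the local ``first-level'' modules $\operatorname{span}\{T_i(e\otimes 1_{M_{(n)}})\}$ are not matched by the transition functions of the lift $\mathcal{B}$, and your global bimodule $F$ does not exist; hence neither $\mathcal{B}\cong\mathcal{T}_F$ nor $A\otimes M_{(n)}\cong\mathcal{O}_F$ is available. What does survive is only a $K$-theory class, namely $[j(1_{C(X)}\otimes e\otimes 1_{M_{(n)}})]_0\in K_0(\mathcal{E}\otimes M_{(n)}\otimes\mathcal{O}_\infty)\cong K^0(X)\otimes\mathbb{Z}_{(n)}$, and the paper shows it has the form $-(n+\frac{y}{r})$ with a genuine denominator $r$; this is precisely why the theorem involves bundles of rank $nr+1$ (and the proof even rank $nrR+1$). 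Your own bookkeeping signals the problem: a class of rank $n+1$ cannot equal $[E]\otimes 1$ for $E$ of rank $nr+1$ unless $r=1$, since rank is preserved under localization, so the passage from $F$ to $E$ as written is inconsistent.

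The second gap is the appeal to Theorem \ref{dada} ``in the $M_{(n)}$-tensored setting.'' No such stabilized classification is available to cite; establishing it is exactly the content of the paper's Proposition \ref{Ker} and Remark \ref{numb}. The paper's route is $KK_X$-theoretic: starting from the exact sequence $0\to C(X)\otimes\mathbb{K}\otimes M_{(n)}\xrightarrow{j}\mathcal{E}\to\mathcal{O}\to 0$, one compares the mapping cones of $j\otimes{\rm id}$ and $j_F\otimes{\rm id}$ (for a suitably chosen $F\in\operatorname{Vect}_{nrR+1}(X)$) in the category $KK_X$ using Lemma \ref{mn}, obtains a $KK_X$-equivalence between $S\mathcal{O}\otimes D$ and $S\mathcal{O}_F\otimes M_{(n)}\otimes D$ with $D=M_{(n)}\otimes\mathcal{O}_\infty$ that is compatible with the unit maps, and then lifts it to a $C(X)$-linear isomorphism by Dadarlat's results in \cite{D3}, invoking $M_{(n)}\otimes\mathcal{O}_\infty$-stability of the fields from \cite{HRW}. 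In particular the compatibility condition $[1]$-preserving (i.e.\ $KK_X(S(\theta\otimes 1))\otimes\mu=KK_X(S(\theta_F\otimes 1))$), which is required for the lifting theorem, must be arranged and is not addressed in your proposal. So while your reduction of the problem is reasonable in outline, the route through a globally defined rank-$(n+1)$ Pimsner bimodule cannot be repaired, and the forward direction needs the $K$-theoretic/mapping-cone argument of Proposition \ref{Ker} (or an equivalent substitute).
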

All necessary arguments are already in \cite{D1}.
We use Kasparov's parametrized $KK$-groups $KK_X(\cdot, \cdot)$ for $C(X)$-algebras (see \cite{KK}).
Consider two unital separable continuous $C(X)$-algebras $A, B$ with the maps $\theta_A, \theta_B$ that determine the $C(X)$-linear structure.
By \cite[Theorem 1.1, Theorem 2.7]{D3},
a $KK_X$-equivalence $\sigma\in KK_X(A, B)$ with $[1_A]_0\otimes \sigma =[1_B]_0$ lifts to a $C(X)$-linear isomorphism $A\cong B$ where we identifies $KK_X(C(X), A)\ni KK_X(\theta_A)$ with $KK(\mathbb{C}, A)\ni [1_A]_0$.
Note that for two continuous fields $C(X)\otimes C, B$, one has the isomorphism of groups $KK_X(C(X)\otimes C, B)=KK(C, B)$ (see \cite[Proof of Corollary 2.8]{D3}).
If there is a $KK_X$-equivalence $\mu\in KK_X(SA, SB)$ with $KK_X(S\theta_A)\otimes \mu=KK_X(S\theta_B)$,
the suspension isomorphism $KK_X(A, B)\cong KK_X(SA, SB)$ gives the above $\sigma\in KK_X(A, B)$.
Since \cite{HRW} implies $M_{(n)}\otimes \mathcal{O}_\infty$-stability of the continuous fields, the proposition below implies Theorem \ref{mt4}.
\begin{prop}\label{Ker}
Let $\mathcal{O}\in \mathfrak{b}_{{(n)}}^{-1}(0)$ be a continuous field with the map $\theta : C(X)\to \mathcal{O}$ that determines the $C(X)$-algebra structure.
Then there is a number $r\geq 1, \operatorname{GCD}(n, r)=1$ and a rank $nr+1$ vector bundle $F$,
and we have a $KK_X$-equivalence $$\mu \in KK_X(S\mathcal{O}\otimes (M_{(n)}\otimes \mathcal{O}_{\infty}),\; S\mathcal{O}_F\otimes M_{(n)}\otimes (M_{(n)}\otimes \mathcal{O}_{\infty}))$$ with $KK_X(S(\theta\otimes 1))\otimes \mu=KK_X(S(\theta_F\otimes 1))$.
\end{prop}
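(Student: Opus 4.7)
The plan is to convert the vanishing $\mathfrak{b}_{(n)}(\mathcal{O})=0$ into a concrete Toeplitz-type extension, read off its $K$-theoretic data, realize this data by a genuine vector bundle after enlarging the rank from $n+1$ to $nr+1$, and finally use the strong self-absorption of $M_{(n)}\otimes\mathcal{O}_\infty$ to promote the $K$-theoretic coincidence to the required $KK_X$-equivalence. First, fix an integer $r\geq 1$ coprime to $n$. Since $b_r=\mathfrak{b}_{(n)}$ by Theorem \ref{r1}, the hypothesis implies that the principal $\operatorname{Aut}_0(\mathbb{K}\otimes M_{(n)})$ bundle produced by $b_r$ is trivial. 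Tracing back through Corollary \ref{mt3} and the map $\eta_{nr}$, one obtains a continuous $C(X)$-algebra $\mathcal{T}$ with fibers $E_{nr+1}\otimes M_{(n)}$, extending $\theta\otimes 1\colon C(X)\to\mathcal{O}\otimes M_{(n)}$, together with a short exact sequence of $C(X)$-algebras
$$0\to C(X)\otimes\mathbb{K}\otimes M_{(n)}\to\mathcal{T}\to\mathcal{O}\otimes M_{(n)}\to 0$$
in which the ideal is globally trivialized. Thus $\mathcal{T}$ plays the role of the Toeplitz algebra of a virtual Hilbert $C(X)$-module of rank $nr+1$.

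Second, apply the six-term exact sequence in $K$-theory to this extension. In analogy with Theorem \ref{op}, its connecting map is multiplication by $1-[\mathcal{E}]$ for a distinguished element $[\mathcal{E}]\in K^0(X)\otimes\mathbb{Z}_{(n)}$ of virtual rank $nr+1$. Following the realization arguments of \cite{D1}, one enlarges $r$ (keeping $\operatorname{GCD}(n,r)=1$) so that $[\mathcal{E}]$ becomes the class of a genuine rank $nr+1$ vector bundle $F\in\operatorname{Vect}_{nr+1}(X)$; this is possible because any class in $K^0(X)\otimes\mathbb{Z}_{(n)}$ is realized by an honest bundle after sufficient rank stabilization on a finite CW complex. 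With this $F$, consider the Pimsner extension
$$0\to C(X)\otimes\mathbb{K}\to\mathcal{T}_F\to\mathcal{O}_F\to 0$$
tensored by $M_{(n)}\otimes\mathcal{O}_\infty$. By construction, the resulting six-term sequences for $\mathcal{T}\otimes\mathcal{O}_\infty$ and $\mathcal{T}_F\otimes M_{(n)}\otimes\mathcal{O}_\infty$ are governed by the same class $[F]\in K^0(X)\otimes\mathbb{Z}_{(n)}$. Appealing to the $KK_X$-UCT and the $C(X)$-linear lifting results of \cite{D3}, together with the $M_{(n)}\otimes\mathcal{O}_\infty$-stability of the fields guaranteed by \cite{HRW}, one upgrades this to the desired class $\mu\in KK_X(S\mathcal{O}\otimes(M_{(n)}\otimes\mathcal{O}_\infty),\,S\mathcal{O}_F\otimes M_{(n)}\otimes(M_{(n)}\otimes\mathcal{O}_\infty))$ satisfying $KK_X(S(\theta\otimes 1))\otimes\mu=KK_X(S(\theta_F\otimes 1))$.

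The main obstacle is the realization step: finding a genuine vector bundle $F$ of rank exactly $nr+1$ representing the abstract class $[\mathcal{E}]$. The freedom to choose any $r$ coprime to $n$, granted by Theorem \ref{r1}, is essential — one can absorb arbitrary additional rank into $M_{(n)}$ without altering the underlying $\mathcal{O}\otimes M_{(n)}$ picture, which is exactly what turns virtual classes in $K^0(X)\otimes\mathbb{Z}_{(n)}$ into honest bundles of the required rank. A secondary technical hurdle is passing from the mere $K$-theoretic match to a $C(X)$-linear $KK_X$-equivalence preserving the two unital inclusions; this is where the strong self-absorption of $M_{(n)}\otimes\mathcal{O}_\infty$ and the $C(X)$-linear classification technology of \cite{D1, D3} become indispensable.
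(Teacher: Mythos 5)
Your overall strategy (trivialized-ideal Toeplitz-type field from the vanishing of the invariant, comparison with the Pimsner extension of a bundle realizing the $K$-theoretic data, then an upgrade to $KK_X$) is the paper's strategy, but two of your key steps have genuine gaps. First, the realization step: the distinguished class you call $[\mathcal{E}]$ lives in $K_0(\mathcal{E}\otimes M_{(n)}\otimes\mathcal{O}_\infty)\cong K^0(X)\otimes\mathbb{Z}_{(n)}$, and it is simply false that ``any class in $K^0(X)\otimes\mathbb{Z}_{(n)}$ is realized by an honest bundle after sufficient rank stabilization'': honest bundles only give integral classes, i.e.\ classes in the image of $K^0(X)$. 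Starting from fibers $E_{nr+1}\otimes M_{(n)}$ via Theorem \ref{r1} does not remove the denominators, because the ideal is still $\mathbb{K}\otimes M_{(n)}$ and the relevant $K_0$-group is still $K^0(X)\otimes\mathbb{Z}_{(n)}$. What the paper actually does is write the class as $-(n+\frac{y}{r})$ with $y\in\tilde{K}^0(X)$ integral, multiply by a large $rR$ coprime to $n$ to obtain the integral class $nrR+yR$ of large virtual rank (realized by $F\in\operatorname{Vect}_{nrR+1}(X)$), and compensate for this rescaling by inserting the $C(X)$-linear endomorphism $\alpha_{\frac{1}{rR}}$ (a $KK_X$-equivalence) into the comparison diagram. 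This rescaling-plus-compensation is exactly the step your ``absorb rank into $M_{(n)}$'' remark gestures at but does not supply.

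Second, the upgrade step is unsupported as written. There is no UCT for $KK_X$, so ``the six-term sequences are governed by the same class $[F]$'' does not yield a $KK_X$-equivalence, let alone one compatible with the two unital inclusions $\theta$ and $\theta_F$. The only place where $K$-theory legitimately determines $KK_X$-classes is for morphisms out of the trivial field $C(X)\otimes(M_{(n)}\otimes\mathcal{O}_\infty)$, via $KK_X(C(X)\otimes C, B)\cong KK(C,B)=\operatorname{Hom}(\mathbb{Z}_{(n)},K^0(X)\otimes\mathbb{Z}_{(n)})$; the paper uses this only to verify that certain squares commute in $KK_X$. The equivalence $\mu$ itself is then manufactured by a mapping-cone argument: the Puppe sequence shows $C_{j\otimes\mathrm{id}}\to S\mathcal{O}\otimes D$ and $C_{j_F\otimes\mathrm{id}}\to S\mathcal{O}_F\otimes M_{(n)}\otimes D$ are $KK_X$-equivalences, and Lemma \ref{mn} (Meyer--Nest) fills in compatible $KK_X$-equivalences between the cones $C_{j\otimes\mathrm{id}}$, $C_{\alpha_{-(n+\frac{y}{r})}}$, $C_{\alpha_{-(nrR+yR)}}$, $C_{j_F\otimes\mathrm{id}}$ from commuting squares whose vertical arrows ($\theta\otimes\mathrm{id}$ via \cite{D3}, $e\otimes$, and the $\alpha$'s built from Kirchberg--Phillips) are already known to be $KK_X$-equivalences; compatibility with $S(\theta\otimes 1)$ and $S(\theta_F\otimes 1)$ is read off from the left column of that diagram, as in \cite{D1}. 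None of this machinery appears in your proposal, and without it the passage from matching $K$-theory to the required $\mu$ does not go through.
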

For two $C(X)$-algebras $A, B$ and a $C(X)$-linear $*$-homomorphism $\varphi : A\to B$,
one has a mapping cone algebra
$$C_\varphi :=\{(f, a)\in (C_0(0, 1]\otimes B)\oplus A\; |\; f(1)=\varphi(a)\},$$
and the following Puppe sequence holds (see \cite{Bl}) :
$$0\to SB\hookrightarrow C_\varphi \to A\to 0.$$
If $A\subset B$ is an ideal and $\varphi$ is the inclusion, one has the exact sequence $$0\to C_0(0, 1]\otimes A\to C_{\varphi}\xrightarrow{q_{\varphi}} S(B/A)\to 0,$$
and the quotient map $q_{\varphi}$ is a $KK_X$-equivalence because $C_0(0, 1]\otimes A$ is $C(X)$-linearly contractible.
We need the following lemma.
\begin{lem}[{\cite[Appendix A]{MN}}]\label{mn}
Let $A, A', B, B'$ be $C(X)$-algebras and let $\varphi : A\to B, \;\varphi' : A'\to B'$ be $C(X)$-linear $*$-homomorphisms.
Consider an additive category $KK_X$ of $C(X)$-algebras whose morphism $\operatorname{Mor}(A, B)$ is given by $KK_X(A, B)$.
For $\alpha\in KK_X(A, A'),\;\beta\in KK_X(B, B')$, we consider the following diagram
$$\xymatrix{
SB\ar[r]\ar[d]^{S\beta}&C_\varphi\ar[r]&A\ar[r]^{\varphi}\ar[d]^{\alpha}&B\ar[d]^{\beta}\\
SB'\ar[r]&C_{\varphi'}\ar[r]&A'\ar[r]^{\varphi'}&B'.
}$$
If the diagram commutes in the category $KK_X$,
then there exists $\gamma\in KK_X(C_\varphi, C_{\varphi'})$ that makes the diagram commutes.
Furthermore, if $\alpha, \beta$ are $KK_X$-equivalences, one can chose $\gamma$ to be a $KK_X$-equivalence.
\end{lem}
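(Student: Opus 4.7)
The plan is to recognize the statement as two purely formal consequences of a triangulated-category structure on $KK_X$: the ``fill-in'' axiom TR3 and the triangulated five lemma. Nothing about the specific algebras $A,A',B,B'$ will enter the argument beyond the fact that Puppe sequences are distinguished triangles.

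First, I would record that $KK_X$ carries a triangulated structure whose suspension functor is $S$ (with $S^2\cong\mathrm{id}$ via Bott periodicity) and whose distinguished triangles are exactly those isomorphic to a Puppe triangle
\[
SB\longrightarrow C_{\varphi}\longrightarrow A\xrightarrow{\varphi}B
\]
attached to a $C(X)$-linear $*$-homomorphism $\varphi\colon A\to B$. This is obtained in the same way as in the non-equivariant case: the mapping cone $C_{\varphi}$ and the suspension $SB$ are manifestly $C(X)$-linear, the Kasparov product is compatible with the $C(X)$-action, and the rotation axiom follows from the standard $KK_X$-equivalence $q_{\varphi}\colon C_{\varphi}\xrightarrow{\sim} S(B/\varphi(A))$ (in the ideal case used in the excerpt) and from comparing iterated mapping cones in general.

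Next, for the existence of $\gamma$, the hypothesis that the right-hand square commutes in $KK_X$ (that is, $\beta\circ\varphi=\varphi'\circ\alpha$ after identifying $\varphi,\varphi'$ with their $KK_X$-classes) is exactly the input of the TR3 axiom applied to the two triangles in the ladder. This axiom produces some $\gamma\in KK_X(C_\varphi,C_{\varphi'})$ completing the given pair $(\alpha,\beta)$ to a morphism of exact triangles. Such a $\gamma$ is not unique, but any choice works for the statement.

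Finally, assume $\alpha$ and $\beta$ are $KK_X$-equivalences; then so is $S\beta$. Apply the covariant homological functor $KK_X(Z,-)$ for an arbitrary $C(X)$-algebra $Z$ to the ladder; by the axioms of a triangulated category each row becomes a long exact sequence of abelian groups, and four out of every five consecutive vertical maps (coming from $\alpha$, $\beta$ and $S\beta$ in the appropriate degrees) are isomorphisms. The ordinary five lemma then forces $KK_X(Z,\gamma)$ to be an isomorphism for every $Z$, and Yoneda turns this into an isomorphism $\gamma$ in $KK_X$. The main obstacle is not the categorical bookkeeping above but the verification that $KK_X$ really admits this triangulated structure with $C(X)$-linear Puppe triangles as distinguished triangles; checking the rotation axiom and the compatibility of $C(X)$-linear mapping cones with the Kasparov product in the equivariant setting is precisely what the appendix of \cite{MN} carries out, which is why the lemma is cited from there rather than reproved here.
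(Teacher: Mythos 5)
Your proposal is correct and matches the route the paper itself takes: the paper offers no independent argument but cites \cite[Appendix A]{MN}, where the $C(X)$-linear mapping-cone triangles are shown to make the Kasparov category triangulated, and the lemma is exactly the fill-in axiom plus the triangulated five lemma (via $KK_X(Z,-)$ and Yoneda) that you spell out. Your closing remark correctly identifies that the only real content is the verification of the triangulated structure, which is what the cited appendix supplies.
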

\begin{proof}[{Proof of Proposition \ref{Ker}}]
For every $\mathcal{O}\in \mathfrak{b}_{{(n)}}^{-1}(0)$,
there is a locally trivial continuous $C(X)$-algebra $\mathcal{E}$ of $E_{n+1}\otimes M_{(n)}$, and the following exact sequence of $C(X)$-algebras holds
$$0\to C(X)\otimes \mathbb{K}\otimes M_{(n)}\xrightarrow{j} \mathcal{E}\to \mathcal{O}\to 0.$$
The evaluation map ${\rm ev}_x$ at $x\in X$ gives an extension $0\to\mathbb{K}\otimes M_{(n)}\to E_{n+1}\otimes M_{(n)}\to \mathcal{O}_{n+1}\otimes M_{(n)}\to 0$.
We also denote by $\theta : C(X)\to \mathcal{E}$ the map that determines the $C(X)$-linear structure.
By \cite[Theorem 1.1]{D3}, the map $\theta\otimes {\rm id} : C(X)\otimes (M_{(n)}\otimes \mathcal{O}_{\infty})\to \mathcal{E}\otimes (M_{(n)}\otimes \mathcal{O}_{\infty})$ gives a $KK_X$-equivalence.
Since $$K_0({\rm ev}_x)([(j\otimes {\rm id}_{M_{(n)}\otimes \mathcal{O}_{\infty}})(1_{C(X)}\otimes e\otimes 1_{M_{(n)}^{\otimes 2}\otimes \mathcal{O}_{\infty}})]_0)=-n\in K_0(E_{n+1}\otimes M_{(n)}^{\otimes 2}\otimes \mathcal{O}_{\infty}),$$
there exist $r\geq 1, \operatorname{GCD}(n, r)=1$ and $y\in \tilde{K}^0(X)$ satisfying $$[(j\otimes {\rm id}_{M_{(n)}\otimes \mathcal{O}_{\infty}})(1_{C(X)}\otimes e\otimes 1_{M_{(n)}^{\otimes 2}\otimes \mathcal{O}_{\infty}})]_0=-(n+\frac{y}{r})\in K_0(\mathcal{E}\otimes M_{(n)}\otimes \mathcal{O}_{\infty})=K^0(X)\otimes \mathbb{Z}_{(n)}.$$
The Kirchiberg--Phillips theorem gives a $*$-homomorphism $M_{(n)}\otimes \mathcal{O}_{\infty}\ni 1\mapsto p\in C(X)\otimes M_{(n)}\otimes \mathcal{O}_\infty$ with $[p]_0=-(n+\frac{y}{r})$ providing a $C(X)$-linear $*$-homomorphism $\alpha_{-(n+\frac{y}{r})} : C(X)\otimes M_{(n)}\otimes \mathcal{O}_{\infty} \to C(X)\otimes M_{(n)}\otimes \mathcal{O}_{\infty}$ with $[\alpha_{-(n+\frac{y}{r})}(1)]_0=-(n+\frac{y}{r})$.
Let $e\otimes$ be the map
$$e\otimes : C(X)\otimes M_{(n)}\otimes \mathcal{O}_\infty \ni f \mapsto (e \otimes 1_{M_{(n)}})\otimes f\in (C(X)\otimes \mathbb{K}\otimes M_{(n)})\otimes M_{(n)}\otimes \mathcal{O}_\infty$$
that gives a $KK_X$-equivalence.
By the identification 
\begin{align*}
KK_X(C(X)\otimes (M_{(n)}\otimes \mathcal{O}_\infty), \; \mathcal{E}\otimes M_{(n)}\otimes \mathcal{O}_{\infty})&\cong KK(M_{(n)}\otimes \mathcal{O}_\infty, \mathcal{E}\otimes M_{(n)}\otimes \mathcal{O}_{\infty})\\
&=\operatorname{Hom}(\mathbb{Z}_{(n)}, K^0(X)\otimes \mathbb{Z}_{(n)}),
\end{align*}
one has $KK_X((j\otimes {\rm id})\circ (e\otimes))=-(n+\frac{y}{r})=KK_X((\theta\otimes {\rm id})\circ (\alpha_{-(n+\frac{y}{r})}))$, 
and the following diagram commutes in $KK_X$
$$\xymatrix{
(C(X)\otimes \mathbb{K}\otimes M_{(n)})\otimes M_{(n)}\otimes \mathcal{O}_\infty\ar[r]^{\quad\quad j\otimes {\rm id}}&\mathcal{E}\otimes M_{(n)}\otimes \mathcal{O}_\infty\\
C(X)\otimes M_{(n)}\otimes \mathcal{O}_\infty\ar[u]^{e\otimes }\ar[r]^{\quad\quad\alpha_{-(n+\frac{y}{r})}}&C(X)\otimes M_{(n)}\otimes \mathcal{O}_\infty.\ar[u]^{\theta\otimes {\rm id}}
}$$
There is  a vector bundle $F\in\operatorname{Vect}_{nrR+1}(X)$ satisfying $[F]-(nrR+1)=yR\in \tilde{K}^0(X)$ for sufficiently large $R\geq 1, \operatorname{GCD}(n, R)=1$.
Let $\alpha_{\frac{1}{rR}}$ (resp. $\alpha_{-(nrR+yR)}$) be a $*$-endomorphism of $C(X)\otimes M_{(n)}\otimes \mathcal{O}_{\infty}$ with $[\alpha_{\frac{1}{rR}}(1)]_0=\frac{1}{rR}$ (resp. $[\alpha_{-(nrR+yR)}(1)]_0=-(nrR+yR)=1-[F]$).
Now we have the following diagram commuting in $KK_X$
$$\xymatrix{
&S\mathcal{O}\otimes D&&\\
S\mathcal{E}\otimes D\ar[r]&C_{j\otimes {\rm id}}\ar[r]\ar[u]^{q_{j\otimes {\rm id}}}&C(X)\otimes\mathbb{K}\otimes M_{(n)}\otimes D\ar[r]^{\quad\quad\quad j\otimes {\rm id}}&\mathcal{E}\otimes D\\
SC(X)\otimes D\ar[u]^{S(\theta\otimes {\rm id})}\ar[r]&C_{\alpha_{-(n+\frac{y}{r})}}\ar@{.>}[u]\ar@{.>}[d]\ar[r]&C(X)\otimes D\ar[d]^{\alpha_{\frac{1}{rR}}}\ar[u]^{e\otimes}\ar[r]^{\alpha_{-(n+\frac{y}{r})}}&C(X)\otimes D\ar[u]^{\theta\otimes {\rm id}}\\
SC(X)\otimes D\ar[d]^{S(\theta_F\otimes 1_{M_{(n)}})\otimes {\rm id}}\ar@{=}[u]\ar[r]&C_{\alpha_{-(nrR+yR)}}\ar@{.>}[d]\ar[r]&C(X)\otimes D\ar[d]^{e\otimes}\ar[r]^{\alpha_{-(nrR+yR)}}&C(X)\otimes D\ar@{=}[u]\ar[d]^{(\theta_F\otimes 1_{M_{(n)}})\otimes {\rm id}}\\
S\mathcal{T}_F\otimes M_{(n)}\otimes D\ar[r]&C_{j_F\otimes {\rm id}}\ar[r]\ar[d]^{q_{j_F\otimes {\rm id}}}&C(X)\otimes \mathbb{K}\otimes M_{(n)}\otimes D\ar[r]^{\quad 1-[F]}&\mathcal{T}_F\otimes M_{(n)}\otimes D\\
&S\mathcal{O}_F\otimes M_{(n)}\otimes D&&
}$$
where we denote $D=M_{(n)}\otimes \mathcal{O}_\infty$ for simplicity.
By Lemma \ref{mn}, the same argument as in \cite[Proof of Theorem 1.1]{D1} proves the statement.
\end{proof}
\begin{rem}\label{numb}
For two vector bundles $F\in \operatorname{Vect}_{nR+1}(X)$ and $F'\in\operatorname{Vect}_{nR'+1}(X)$ with $R, R'\geq 1, \operatorname{GCD}(n, R)=\operatorname{GCD}(n, R')=1$,
the same argument in the above proof proves that $\mathcal{O}_F\otimes M_{(n)}\cong \mathcal{O}_{F'}\otimes M_{(n)}$ if and only if $(1-[F])K^0(X)\otimes \mathbb{Z}_{(n)}=(1-[F'])K^0(X)\otimes \mathbb{Z}_{(n)}$.
Therefore we can identify $\mathfrak{b}_{{(n)}}^{-1}(0)$ with the set of equivalence classes $\tilde{K}^0(X)\otimes \mathbb{Z}_{(n)}/\sim_n$ where the equivalence relation is defined as follows $\colon$ $a\sim_n b \Leftrightarrow (n+a)(1+z)=n+b$ for some $z\in \tilde{K}^0(X)\otimes \mathbb{Z}_{(n)}$ (see \cite[Lemma 4.7]{r2}).
\end{rem}
\begin{thm}\label{po}
For a finite CW complex $X$ with $\operatorname{Tor}(H^{2k+1}(X), \mathbb{Z}_n)=0$ for every $k\geq 1$,
we have $\operatorname{Im}\mathfrak{b}_{(n)}=0$ and $[X, \operatorname{BAut}(\mathcal{O}_{n+1})]=\tilde{K}^0(X)\otimes \mathbb{Z}_{(n)}/\sim_n$.
\end{thm}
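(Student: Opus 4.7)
The second identification follows from the first via Theorem \ref{mt4} and Remark \ref{numb}: once $\operatorname{Im}\mathfrak{b}_{(n)}=0$, every class in $[X,\operatorname{BAut}(\mathcal{O}_{n+1})]$ belongs to $\mathfrak{b}_{(n)}^{-1}(0)=\tilde{K}^0(X)\otimes\mathbb{Z}_{(n)}/\sim_n$. So the crux is to show $\operatorname{Im}\mathfrak{b}_{(n)}=0$ under the hypothesis.

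The plan is to analyse $\mathfrak{b}_{(n)}$ through Postnikov towers. By Corollary \ref{mt3} and Theorem \ref{L}, $\pi_*(\operatorname{BAut}(\mathcal{O}_{n+1}))$ equals $\mathbb{Z}_n$ in even degrees $\geq 2$ and $0$ in odd degrees, while by Theorem \ref{DPT} together with Bott periodicity for $M_{(n)}$, $\pi_*(\operatorname{BAut}_0(\mathbb{K}\otimes M_{(n)}))$ equals $\mathbb{Z}_{(n)}$ in odd degrees $\geq 3$ and $0$ elsewhere. Since $\operatorname{Hom}(\mathbb{Z}_n,\mathbb{Z}_{(n)})=0$, the map $\mathfrak{b}_{(n)}$ is zero on every homotopy group, so its image is detected entirely by secondary (Bockstein-type) operations passing between the $\mathbb{Z}_n$-valued Postnikov invariants of the source and the $\mathbb{Z}_{(n)}$-valued ones of the target. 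The short exact sequence $0\to\mathbb{Z}_{(n)}\xrightarrow{n}\mathbb{Z}_{(n)}\to\mathbb{Z}_n\to 0$ (the quotient $\mathbb{Z}_{(n)}/n\mathbb{Z}_{(n)}$ being $\mathbb{Z}_n$) produces an integral Bockstein $\beta:H^k(X;\mathbb{Z}_n)\to H^{k+1}(X;\mathbb{Z}_{(n)})$ whose image lies in the $n$-torsion subgroup of $H^{k+1}(X)\otimes\mathbb{Z}_{(n)}$, namely in the image of $\operatorname{Tor}(H^{k+1}(X),\mathbb{Z}_n)$. The explicit computations of Section 5 identify the primary obstruction with the map $\beta:H^2(X;\mathbb{Z}_n)\to\operatorname{Tor}(H^3(X),\mathbb{Z}_n)$ in the $\dim X\leq 3$ case (and with the analogous Bockstein $\tilde{K}^0(X;\mathbb{Z}_n)\to\operatorname{Tor}(K^1(X),\mathbb{Z}_n)$ in the suspension case); the higher Postnikov stages contribute analogous Bocksteins $H^{2k}(X;\mathbb{Z}_n)\to\operatorname{Tor}(H^{2k+1}(X),\mathbb{Z}_n)$ for $k\geq 1$.

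Under the hypothesis $\operatorname{Tor}(H^{2k+1}(X),\mathbb{Z}_n)=0$ for all $k\geq 1$, every such Bockstein has vanishing image, so $\mathfrak{b}_{(n)}$ is zero at every Postnikov stage and hence zero overall; combined with the $n^k$-torsion constraint of Theorem \ref{imageb}, which rules out any non-Bockstein contribution, this forces $\operatorname{Im}\mathfrak{b}_{(n)}=0$. The main technical obstacle is rigorously justifying that all higher-order Postnikov obstructions of $\mathfrak{b}_{(n)}$ reduce to first-order Bocksteins of this form --- i.e.\ ruling out essential secondary operations arising from the $k$-invariants of $\operatorname{BAut}(\mathcal{O}_{n+1})$. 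This should be tractable because both Postnikov towers are built solely from Eilenberg--MacLane spaces $K(\mathbb{Z}_n,2k)$ and $K(\mathbb{Z}_{(n)},2k+1)$, and the only nontrivial natural transformation between cohomology with $\mathbb{Z}_n$-coefficients and with $\mathbb{Z}_{(n)}$-coefficients that shifts degree by one is the integral Bockstein $\beta$, so a naturality/induction argument up the towers should suffice.
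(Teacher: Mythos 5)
Your reduction of the second identification to $\operatorname{Im}\mathfrak{b}_{(n)}=0$ via Remark \ref{numb} is fine, but the core of your argument has a genuine gap. You propose to control $\mathfrak{b}_{(n)}$ by a Postnikov/obstruction analysis, and you yourself flag the unproved step: that every higher-order obstruction reduces to a primary integral Bockstein. Your justification --- that the only degree-one primary operation from $\mathbb{Z}_n$-coefficients to $\mathbb{Z}_{(n)}$-coefficients is the Bockstein --- concerns primary operations only; the obstructions arising from the Postnikov towers are higher-order operations attached to nontrivial $k$-invariants, and these need not decompose into primary ones. Moreover $\operatorname{BAut}(\mathcal{O}_{n+1})$ is not a product of Eilenberg--MacLane spaces (it admits no $H$-space structure by \cite{r2}), and $\mathfrak{b}_{(n)}$ is merely a map of pointed homotopy sets, not a natural transformation of cohomology theories, so ``zero on homotopy groups'' does not set up the inductive comparison of towers you appeal to. The identifications with Bocksteins proved in Section 5 hold only for $\dim X\leq 3$ and for suspensions and do not provide the inductive input for general $X$; as written, the argument is a program rather than a proof.

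The statement has a much shorter proof that avoids analysing $\mathfrak{b}_{(n)}$ altogether, and this is the missing idea. By Theorem \ref{imageb}, $\operatorname{Im}\mathfrak{b}_{(n)}$ consists of $n^k$-torsion elements of the reduced group $\bar{E}^1_{M_{(n)}}(X)=[X,\operatorname{BAut}_0(\mathbb{K}\otimes M_{(n)})]$. Under the hypothesis $\operatorname{Tor}(H^{2k+1}(X),\mathbb{Z}_n)=0$ for all $k\geq 1$, the groups $H^{2k+1}(X,\mathbb{Z}_{(n)})$ are torsion free, and the Atiyah--Hirzebruch spectral sequence argument of \cite[Corollary 4.4]{DP} shows that the graded subgroups of $\bar{E}^1_{M_{(n)}}(X)$ are all torsion free, hence $\operatorname{Tor}(\bar{E}^1_{M_{(n)}}(X),\mathbb{Z}_n)=0$. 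An $n$-power-torsion element of a group with no $n$-torsion is zero, so $\operatorname{Im}\mathfrak{b}_{(n)}=0$, and Remark \ref{numb} then gives $[X,\operatorname{BAut}(\mathcal{O}_{n+1})]=\mathfrak{b}_{(n)}^{-1}(0)=\tilde{K}^0(X)\otimes\mathbb{Z}_{(n)}/\sim_n$. In short: you correctly invoked Theorem \ref{imageb}, but the decisive step is to compute the $n$-torsion of the \emph{target} group via the AHSS, not to trace how $\mathfrak{b}_{(n)}$ acts stage by stage.
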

\begin{proof}
By Remark \ref{numb} and Theorem \ref{imageb},
it is enough to show $\operatorname{Tor}(\bar{E}_{M_{(n)}}^1(X), \mathbb{Z}_n)=0$.
The assumption yields that the group $H^{2k+1}(X, \mathbb{Z}_{(n)})$ is torsion free for every $k\geq 1$,
and the graded subgroups of $\bar{E}_{M_{(n)}}^1(X)$ associated with the Atiyah--Hirzebruch spectral sequence are all torsion free by the argument of \cite[Corollary 4.4]{DP}.
In particular, we have $\operatorname{Tor}(\bar{E}^1_{M_{(n)}}(X), \mathbb{Z}_n)=0$.
\end{proof}
This generalizes \cite[Theorem 1.6]{D1} and \cite[Theorem 4.12]{r2},
and one can classify the continuous fields over the space whose even cohomology admits $n$-torsion.
For example, Theorem \ref{po} applies to the real projective spaces $\mathbb{R}{\rm P}^m$ for $m\geq 2$.
\begin{rem}
Following the argument in \cite[Proof of Theorem 2.11]{DP2},
we can identify $\mathfrak{b}_{\mathcal{Z}}\otimes 1 : [X, \operatorname{BAut}(\mathcal{O}_{n+1})]\to \bar{E}^1_{\mathcal{Z}}(X)\otimes \mathbb{Z}_{(n)}$ with $\mathfrak{b}_{(n)}=\mathfrak{b}_{\mathcal{Z}\otimes M_{(n)}}$ and we have $\mathfrak{b}_{\mathcal{Z}}^{-1}(0)=\mathfrak{b}_{(n)}^{-1}(0)$.
It also follows that $\mathfrak{b}_{\mathcal{O}_\infty}^{-1}(0)=\mathfrak{b}_{(n)}^{-1}(0)$.
\end{rem}

\section{Examples and questions}
\subsection{Examples}
We give some examples of computation of the invariant $\mathfrak{b}_{{(n)}}$.
\subsubsection{The case of ${\rm dim}X\leq 3$}
By Corollary \ref{mt3} and \cite[Proof of Lemma 3.20]{ST},
the group homomorphism $\otimes {\rm id}_{M_{(n)}} : \operatorname{Aut}(E_{n+1})\to\operatorname{Aut}(E_{n+1}\otimes M_{(n)})$ is 3-connected,
and the map $\operatorname{B}(\otimes {\rm id}_{M_{(n)}})$ is 4-connected.
Construction of the Postnikov tower yields another $4$-connected map $P_2 : \operatorname{BAut}(E_{n+1})\to K(\mathbb{Z}_n, 2)$ (see \cite[p 195]{AT}).
Denote the restriction map by $\eta : \operatorname{Aut}(E_{n+1})\to \operatorname{Aut}(\mathbb{K})$.
There is a map $\beta : K(\mathbb{Z}_n, 2)\to K(\mathbb{Z}, 3)=\operatorname{BAut}(\mathbb{K})$ that gives the Bockstein map $H^2(X, \mathbb{Z}_n)\to H^3(X)$ (see \cite{CS}).
By \cite[Lemma 3.20]{ST},
one has two elements $[\beta\circ P_2], [\operatorname{B}(\eta)]\in [\operatorname{BAut}(E_{n+1}), K(\mathbb{Z}, 3)]=H^3(\operatorname{BAut}(E_{n+1}))=\mathbb{Z}_n$.
Applying the Bockstein exact sequence and \cite[Theorem 3.15]{ST} for the reduced suspension of the Moore space $M_n$ (see \cite[Section 2]{r2}),
the above elements both generate $\mathbb{Z}_n$, and there is a number $r_0\geq 1, \operatorname{GCD}(n, r_0)=1$ with $r_0[\beta\circ P_2]=[\operatorname{B}(\eta)]\in\mathbb{Z}_n$.
For $X$ with ${\rm dim} X\leq 3$,
we have the following commutative diagram
$$\xymatrix{
[X, \operatorname{BAut}(E_{n+1}\otimes M_{(n)})]\ar[r]^{\operatorname{B}(\eta_1)_*}&\bar{E}^1_{M_{(n)}}(X)=H^3(X)\otimes \mathbb{Z}_{(n)}\\
[X, \operatorname{BAut}(E_{n+1})]\ar[d]^{{P_2}_*}\ar[u]^{\operatorname{B}(\otimes{\rm id}_{M_{(n)}})_*}\ar[r]^{\operatorname{B}(\eta)_*}&\operatorname{Tor}(H^3(X), \mathbb{Z}_n),\ar[u]\\
H^2(X, \mathbb{Z}_n)\ar@{->>}[ur]^{r_0\beta_*}&
}$$
and the left vertical maps are bijective by \cite[p 182]{WH},
where $\bar{E}^1_{M_{(n)}}(X) : =[X, \operatorname{BAut}_0(\mathbb{K}\otimes M_{(n)})]\subset E^1_{M_{(n)}}(X)$ is the reduced group (see \cite{DP2}).
Therefore we have $|\mathfrak{b}_{{(n)}}^{-1}(0)|=|H^2(X)\otimes \mathbb{Z}_n|$ by the Bockstein exact sequence
$$ 0\to H^2(X)\otimes \mathbb{Z}_n\to H^2(X, \mathbb{Z}_n)\xrightarrow{\beta_*} \operatorname{Tor}(H^3(X), \mathbb{Z}_n)\to 0.$$
\begin{rem}\label{dim}
By \cite[Theorem 1.2, p 112]{H},
two maps $H^2(X)\ni [L]\mapsto [L]-1\in \tilde{K}^0(X)$ and $H^2(X)\ni [L]\mapsto [L\oplus\underline{\mathbb{C}}^{\oplus n}]\in [X, \operatorname{B}U(n+1)]$ are bijective in the case of ${\rm dim}X\leq 3$.
The inverse map is given by the determinant map ${\rm det} : U(n+1)\to S^1$.
\end{rem}
\begin{lem}\label{o}
For a finite CW complex $X$ with ${\rm dim}X\leq 3$,
we have $([L]-1)([H]-1)=0\in \tilde{K}^0(X)$, $L, H\in\operatorname{Vect}_1(X)$.
\end{lem}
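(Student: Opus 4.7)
The plan is to exploit the Atiyah--Hirzebruch filtration on $K^0(X)$ given by
\[
F^p K^0(X) := \ker\bigl(K^0(X) \to K^0(X^{p-1})\bigr),
\]
where $X^{p-1}$ denotes the $(p-1)$-skeleton of $X$. This filtration is multiplicative, that is, $F^p K^0(X) \cdot F^q K^0(X) \subset F^{p+q} K^0(X)$, which is a standard feature of the Atiyah--Hirzebruch spectral sequence for a multiplicative generalized cohomology theory.

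First I would check that $[L]-1$ and $[H]-1$ lie in $F^2 K^0(X)$. For this it suffices to observe that the restriction of any line bundle to the $1$-skeleton $X^1$ is trivial, because line bundles on $X^1$ are classified by $H^2(X^1)=0$. Hence both $[L]|_{X^1}$ and $[H]|_{X^1}$ equal $1$ in $K^0(X^1)$, so $[L]-1$ and $[H]-1$ are in $F^2 K^0(X)$.

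Then, by multiplicativity of the filtration, $([L]-1)([H]-1) \in F^4 K^0(X)$. Since $\dim X \leq 3$ we have $X^3 = X$, so the restriction $K^0(X) \to K^0(X^3)$ is the identity and $F^4 K^0(X) = 0$. This gives $([L]-1)([H]-1)=0 \in \tilde{K}^0(X)$, as required.

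The only non-routine ingredient is the multiplicativity of the skeletal filtration, which is the main point to justify; however this is a classical fact that can also be seen directly by writing representatives of $[L]-1$ and $[H]-1$ as virtual bundles with support in pairs $(X,X^1)$ and using the external product $K^0(X,X^1)\otimes K^0(X,X^1)\to K^0(X\times X, X\times X^1 \cup X^1\times X)$ pulled back along the diagonal, whose target lands in $K^0(X,X^3)=0$.
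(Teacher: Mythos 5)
Your argument is correct, but it is genuinely different from the one in the paper. You work with the Atiyah--Hirzebruch (skeletal) filtration: since line bundles are trivial over the $1$-skeleton, both $[L]-1$ and $[H]-1$ lie in $F^2K^0(X)$, and multiplicativity of the filtration places their product in $F^4K^0(X)$, which vanishes because $X=X^3$. The only point that needs care is the multiplicativity itself; your sketch via the relative external product pulled back along the diagonal is the right idea, but strictly speaking you must replace the diagonal by a cellular approximation $\Delta'\simeq\Delta$, which carries $X^3$ into $(X\times X)^3\subset X\times X^1\cup X^1\times X$, so that the pullback of the relative class lands in $K^0(X,X^3)=0$; alternatively you can simply cite the classical fact that the AHSS filtration is multiplicative. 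The paper instead uses Remark \ref{dim} (rank-$2$ bundles over a complex of dimension $\le 3$ are classified by their determinant): from $\det(L\oplus H)=L\otimes H=\det((L\otimes H)\oplus\underline{\mathbb{C}})$ it deduces the bundle isomorphism $L\oplus H\cong (L\otimes H)\oplus\underline{\mathbb{C}}$, hence $[L][H]+1=[L]+[H]$, which is exactly the claimed identity. The paper's route yields an explicit isomorphism of bundles and stays within the low-dimensional classification machinery already set up in Remark \ref{dim}; your route is more general and more robust --- it shows that the product of any two classes vanishing on the $1$-skeleton dies when $\dim X\le 3$, with no special role played by line bundles --- at the cost of invoking (or proving) the multiplicative structure of the skeletal filtration.
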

\begin{proof}
By Remark \ref{dim}, the map $\operatorname{B}({\rm det})_* : [X, \operatorname{B}U(2)]\to [X, \operatorname{B}S^1]$ is bijective.
Since $\operatorname{B}({\rm det})_*([L\oplus H])=[L\otimes H]=\operatorname{B}({\rm det})_*([(L\otimes H)\oplus \underline{\mathbb{C}}])$,
we have $[L][H]+1=[L]+[H]\in K^0(X)$.
\end{proof}
\begin{thm}\label{three}
For a finite connected CW complex $X$ with ${\rm dim} X\leq 3$,
the following holds $\colon$

1) $[X, \operatorname{BAut}(\mathcal{O}_{n+1})]=H^2(X, \mathbb{Z}_n)$,

2) $\operatorname{Im}\mathfrak{b}_{{(n)}}=\operatorname{Tor}(H^3(X), \mathbb{Z}_n)\subset E^1_{M_{(n)}}(X)$, $|\mathfrak{b}^{-1}_{(n)}(0)|=|H^2(X)\otimes \mathbb{Z}_n|$,

3) $\mathfrak{b}_{{(n)}}^{-1}(0)=O_1: =\{\mathcal{O}_E\, |\, E=L\oplus \underline{\mathbb{C}}^{\oplus n}, L\in \operatorname{Vect}_1(X)\}/\sim_{\rm isom}$.
\end{thm}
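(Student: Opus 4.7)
The plan is to read (1) and (2) off the commutative diagram preceding the statement, and to prove (3) by a cardinality comparison using Theorem \ref{dada} together with Lemma \ref{o} and Remark \ref{dim}.

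For (1), the preceding discussion combines a $4$-connected Postnikov map $P_2 \colon \operatorname{BAut}(E_{n+1}) \to K(\mathbb{Z}_n, 2)$ with the $4$-connected map $\operatorname{B}(\otimes \operatorname{id}_{M_{(n)}})$ and the weak equivalence $\operatorname{B}(q)$ of Corollary \ref{mt3}. These together identify $\operatorname{BAut}(\mathcal{O}_{n+1})$ with $\operatorname{BAut}(E_{n+1})$ in the range relevant to $X$, and the standard obstruction argument \cite[p 182]{WH} yields $[X, \operatorname{BAut}(\mathcal{O}_{n+1})] \cong [X, K(\mathbb{Z}_n, 2)] = H^2(X, \mathbb{Z}_n)$ whenever $\dim X \leq 3$.

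For (2), the triangle in the preceding diagram factors $\mathfrak{b}_{(n)}$, after the identification of (1), as $r_0 \beta_* \colon H^2(X, \mathbb{Z}_n) \to \operatorname{Tor}(H^3(X), \mathbb{Z}_n) \subset H^3(X) \otimes \mathbb{Z}_{(n)} = \bar{E}^1_{M_{(n)}}(X)$, where $r_0$ is a unit modulo $n$. Surjectivity of the Bockstein $\beta_*$ onto $\operatorname{Tor}(H^3(X), \mathbb{Z}_n)$ gives the image, and the Bockstein short exact sequence
$$0 \to H^2(X) \otimes \mathbb{Z}_n \to H^2(X, \mathbb{Z}_n) \xrightarrow{\beta_*} \operatorname{Tor}(H^3(X), \mathbb{Z}_n) \to 0$$
computes $|\mathfrak{b}_{(n)}^{-1}(0)| = |H^2(X) \otimes \mathbb{Z}_n|$.

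For (3), the inclusion $O_1 \subseteq \mathfrak{b}_{(n)}^{-1}(0)$ is immediate from Theorem \ref{mt4}, since each $\mathcal{O}_{L \oplus \underline{\mathbb{C}}^{\oplus n}}$ comes from a rank-$(n+1)$ vector bundle. To get equality I will count $|O_1|$ via Theorem \ref{dada}. By Remark \ref{dim} the assignment $[L] \mapsto [L]-1$ is a bijection $H^2(X) \to \tilde{K}^0(X)$, and combined with Lemma \ref{o} this forces $\tilde{K}^0(X) \cdot \tilde{K}^0(X) = 0$. Setting $a = [L]-1$, $b = [L']-1$ and expanding, one finds $(n+b)(m+c) = nm + mb + nc$ for $m \in \mathbb{Z},\, c \in \tilde{K}^0(X)$, whence $n+a \in (n+b) K^0(X)$ if and only if $a - b \in n \tilde{K}^0(X)$. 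Consequently
$$|O_1| = \bigl|\tilde{K}^0(X)/n \tilde{K}^0(X)\bigr| = |H^2(X) \otimes \mathbb{Z}_n| = |\mathfrak{b}_{(n)}^{-1}(0)|,$$
and since $H^2(X) \otimes \mathbb{Z}_n$ is finite, the inclusion is forced to be a bijection.

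The main technical point is the nilpotency step: Lemma \ref{o} gives $([L]-1)([L']-1) = 0$ only for line bundles, but Remark \ref{dim} promotes this by additivity to $\tilde{K}^0(X) \cdot \tilde{K}^0(X) = 0$, which is exactly what collapses the ideal equality $(n+a) K^0(X) = (n+b) K^0(X)$ to the congruence $a \equiv b \pmod{n \tilde{K}^0(X)}$ and aligns $|O_1|$ with the Bockstein count from (2).
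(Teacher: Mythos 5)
Your proposal is correct and follows essentially the same route as the paper: parts 1) and 2) are read off the $4$-connectedness/Postnikov diagram and the Bockstein exact sequence exactly as in the discussion preceding the theorem, and part 3) is the paper's counting argument via Theorem \ref{dada}, Lemma \ref{o} and Remark \ref{dim} comparing $|O_1|$ with $|H^2(X)\otimes\mathbb{Z}_n|=|\mathfrak{b}_{(n)}^{-1}(0)|$. Your reformulation of the isomorphism criterion through $\tilde{K}^0(X)\cdot\tilde{K}^0(X)=0$ is just an equivalent packaging of the paper's identity $(n+([L]-1))[H]=n+([L\otimes H^{\otimes n}]-1)$, so there is no substantive difference.
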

\begin{proof}
Since 1) and 2) are already proved,
we show 3).
By Lemma \ref{o}, we have $(n+([L]-1))[H]=n+([L\otimes H^{\otimes n}]-1) \in K^0(X)$ for $L, H\in \operatorname{Vect}_1(X)$, and Theorem \ref{dada} implies $$\mathcal{O}_{L\oplus \underline{\mathbb{C}}^{n}}\cong \mathcal{O}_{L'\oplus\underline{\mathbb{C}}^n}\Leftrightarrow [L]-[L']\in nH^2(X).$$
Thus we have $|\mathfrak{b}_{{(n)}}^{-1}(0)|=|H^2(X)\otimes \mathbb{Z}_n|=|O_1|<\infty$, and 3) is proved.
\end{proof}
\subsubsection{The case of the suspension $X=SY$}
Since the group $\operatorname{Aut}(\mathcal{O}_{n+1})$ is path connected,
we have $[SY, \operatorname{BAut}(\mathcal{O}_{n+1})]=[Y, \operatorname{Aut}(\mathcal{O}_{n+1})]=K_1(C(Y)\otimes \mathcal{O}_{n+1})$ where $SY$ is the non-reduced suspension of $Y$.
The invariant $\mathfrak{b}_{{(n)}}$ is given by ${\eta_n}_* : [Y, \operatorname{Aut}(E_{n+1}\otimes M_{(n)})]\to [Y, \operatorname{Aut}_0(\mathbb{K}\otimes M_{(n)})]\subset K^0(Y)\otimes \mathbb{Z}_{(n)}.$
The group structure of $[Y, \operatorname{Aut}(\mathcal{O}_{n+1})]$ was determined in \cite[Theorem 3.1]{r2},
and there is a group homomorphism $1-\delta : [Y, \operatorname{Aut}(\mathcal{O}_{n+1})]\to {K^0(Y)}^\times$.
The proof of Lemma \ref{triv} yields the following corollary.
\begin{cor}\label{sy}
The invariant $\mathfrak{b}_{{(n)}}$ is identified with the index map $\delta : K_1(C(Y)\otimes \mathcal{O}_{n+1}\otimes M_{(n)})\to \operatorname{Tor}(K^0(Y)\otimes \mathbb{Z}_{(n)}, \mathbb{Z}_n)$,
and for $[\alpha]\in [Y, \operatorname{Aut}(\mathcal{O}_{n+1})]$, we have $$\mathfrak{b}_{{(n)}}([\alpha])=1-\delta([u'\circ\alpha]_1)\in 1+\operatorname{Tor}(K^0(Y)\otimes \mathbb{Z}_{(n)}, \mathbb{Z}_n)\subset E^1_{M_{(n)}}(SY).$$
\end{cor}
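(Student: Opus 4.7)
The approach is to retrace the computation inside the proof of Lemma \ref{triv}, observing that the hypothesis $\operatorname{Tor}(K^0(X),\mathbb{Z}_n)=0$ there was invoked only at the final step to force the right-hand side of the key identity to vanish; dropping that hypothesis leaves the same identity as a genuine formula, which is exactly what the corollary records.

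First I would unwind the suspension. The groups $\operatorname{Aut}(\mathcal{O}_{n+1})$, $\operatorname{Aut}(\mathcal{O}_{n+1}\otimes M_{(n)})$, $\operatorname{Aut}(E_{n+1}\otimes M_{(n)})$ (Theorem \ref{pc}) and $\operatorname{Aut}_0(\mathbb{K}\otimes M_{(n)})$ are all path connected, so the adjunction $[SY,\operatorname{BG}]=[Y,G]$ applies uniformly and identifies $\mathfrak{b}_{(n)}$ with the composition
$$[Y,\operatorname{Aut}(\mathcal{O}_{n+1})]\xrightarrow{(\otimes\operatorname{id}_{M_{(n)}})_*}[Y,\operatorname{Aut}(\mathcal{O}_{n+1}\otimes M_{(n)})]\xleftarrow[\sim]{q_*}[Y,\operatorname{Aut}(E_{n+1}\otimes M_{(n)})]\xrightarrow{(\eta_n)_*}[Y,\operatorname{Aut}_0(\mathbb{K}\otimes M_{(n)})],$$
where the middle bijection is Corollary \ref{mt3}. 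By Theorem \ref{DPT}(2) the target is identified with $1+\tilde{K}^0(Y)\otimes\mathbb{Z}_{(n)}$ via $\gamma\mapsto [\gamma(1\otimes e\otimes 1)]_0$, so the statement of Corollary \ref{sy} is reduced to computing $[\beta(1\otimes e\otimes 1)]_0$, where $\beta:Y\to\operatorname{Aut}(E_{n+1}\otimes M_{(n)})$ is any lift of $\alpha\otimes\operatorname{id}_{M_{(n)}}$ obtained from $q_*^{-1}$.

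Second I would import the $\mathbb{M}_2$ lift from the proof of Lemma \ref{triv} verbatim, now read over $C(Y)$. Regarding $\beta$ as a $C(Y)$-linear $*$-automorphism of $C(Y)\otimes E_{n+1}\otimes M_{(n)}$, the same matrix whose off-diagonal entries are $\beta(1\otimes e\otimes 1)$ and $1\otimes e\otimes 1$ lifts $u'\circ\tilde{\beta}\oplus(u'\circ\tilde{\beta})^*$ to a unitary in $\mathbb{M}_2(C(Y)\otimes E_{n+1}\otimes M_{(n)})$. By the very definition of the index map in the six-term sequence associated with the extension $0\to\mathbb{K}\otimes M_{(n)}\to E_{n+1}\otimes M_{(n)}\to\mathcal{O}_{n+1}\otimes M_{(n)}\to 0$, this lift yields
$$[1\otimes e\otimes 1]_0-[\beta(1\otimes e\otimes 1)]_0=\delta\bigl([u'\circ\tilde{\beta}]_1\bigr)=\delta\bigl([u'\circ\alpha]_1\bigr)$$
in $K^0(Y)\otimes\mathbb{Z}_{(n)}$, where the second equality uses the homotopy $\tilde{\beta}\sim_h\alpha\otimes\operatorname{id}_{M_{(n)}}$ together with the naturality of $\delta$ under $\otimes\operatorname{id}_{M_{(n)}}$. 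The six-term sequence in Section 2.1 also pins the image of $\delta$ to $\operatorname{Tor}(K^0(Y)\otimes\mathbb{Z}_{(n)},\mathbb{Z}_n)$, giving the asserted ambient torsion subgroup.

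Combining the two steps yields $\mathfrak{b}_{(n)}([\alpha])=[\beta(1\otimes e\otimes 1)]_0=1-\delta([u'\circ\alpha]_1)$ inside $1+\operatorname{Tor}(K^0(Y)\otimes\mathbb{Z}_{(n)},\mathbb{Z}_n)$, which is exactly the identification claimed. The hard part is the bookkeeping rather than the computation: one must check that the suspension/loop adjunction $[SY,\operatorname{BG}]=[Y,G]$, the Dadarlat--Pennig identification $\eta_*$ of Theorem \ref{DPT}(2), and the naturality of the boundary map $\delta$ all align, so that the single $\mathbb{M}_2$ lift from Lemma \ref{triv} really does compute $\mathfrak{b}_{(n)}$ on the nose. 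Once this alignment is verified, no new technical input beyond Lemma \ref{triv} is needed.
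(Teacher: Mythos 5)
Your proposal is correct and matches the paper's intended argument: the paper likewise reduces $\mathfrak{b}_{(n)}$ on $[SY,\operatorname{BAut}(\mathcal{O}_{n+1})]=[Y,\operatorname{Aut}(\mathcal{O}_{n+1})]$ to ${\eta_n}_*$ via Corollary \ref{mt3} and then simply cites the computation in the proof of Lemma \ref{triv}, whose $\mathbb{M}_2$ lift gives $[1\otimes e\otimes 1]_0-[\beta(1\otimes e\otimes 1)]_0=\delta([u'\circ\tilde{\beta}]_1)$ without using the torsion-vanishing hypothesis. Your bookkeeping of the suspension adjunction, the Dadarlat--Pennig identification, and the naturality of $\delta$ under $\otimes\,{\rm id}_{M_{(n)}}$ is exactly what the paper leaves implicit.
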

Two rings $K^0(\mathbb{R}{\rm P}^5)=\mathbb{Z}[\nu]/(\nu^2+2\nu, \nu^3)$ and $K^0(\mathbb{R}{\rm P}^3)=\mathbb{Z}[\nu']/({\nu'}^2+2\nu', {\nu'}^2)$ are well-known (see \cite[Theorem 7.3]{JFA}).
By the following commutative diagram
$$\xymatrix{
&H^2(\mathbb{R}{\rm P}^5)\ar[r]\ar@{=}[d]&K^0(\mathbb{R}{\rm P}^5)\ar[d]\ar@{=}[r]&\mathbb{Z}\oplus \mathbb{Z}_4\\
\mathbb{Z}_2\ar@{=}[r]&H^2(\mathbb{R}{\rm P}^3)\ar[r]&K^0(\mathbb{R}{\rm P}^3)\ar@{=}[r]&\mathbb{Z}\oplus \mathbb{Z}_2
}$$
the generator of $H^2(\mathbb{R}{\rm P}^5)$ is sent to one of $1\pm{\nu}\in K^0(\mathbb{R}{\rm P}^5)$.
Since $1\pm{\nu}\not\in 1+\operatorname{Tor}(K^0(\mathbb{R}{\rm P}^5), \mathbb{Z}_2)$,
the image of $\mathfrak{b}_{{(n)}} : [S\mathbb{R}{\rm P}^5, \operatorname{BAut}(\mathcal{O}_{n+1})]\to E_{M_{(n)}}^1(S\mathbb{R}{\rm P}^5)$ dose not contain $H^3(S\mathbb{R}{\rm P}^5)$ by Corollary \ref{sy}.
However the invariant $\mathfrak{b}_{{(n)}}$ is not zero because $1+2\nu\in 1+\operatorname{Tor}(K_0(\mathbb{R}{\rm P}^5), \mathbb{Z}_2)\not=0$,
and this example suggests that the third cohomology alone is not enough to investigate the invariant.
\subsection{Questions}
We summarize some open problems.
\begin{prob}
Does the equation $|\mathfrak{b}_{{(n)}}^{-1}(z)|=|\mathfrak{b}_{{(n)}}^{-1}(0)|$ hold for non-trivial $z\not =0\in \operatorname{Im}\mathfrak{b}_{{(n)}}$?
Is there any operator algebraic realization, like Cuntz--Pimsner construction, of the elements in $\mathfrak{b}_{{(n)}}^{-1}(z)$?
\end{prob}
Since \cite{r2} shows that $\operatorname{BAut}(\mathcal{O}_{n+1})$ has no $H$-space structure,
it is not obvious that the image of $\mathfrak{b}_D$ is a subgroup of $E_D^1$ or not.
By \cite[Example 3.5]{DP2},
it might be worth pointing out the following questions.
\begin{prob}
Is the set $\operatorname{Im}\mathfrak{b}_D\subset E^1_D$ closed with respect to taking inverse?
Is the set $\operatorname{Im}\mathfrak{b}_D$ a subgroup of $E^1_D$?
\end{prob}
\begin{prob}
Is there a finite CW complex $X$ for which $\operatorname{Im}\mathfrak{b}_D$ contains a non-trivial $n^k$-torsion with $k>1$?
\end{prob}

\end{document}